\newcommand{\thmref}[1]{Theorem~\ref{#1}}
\newcommand{\lemref}[1]{Lemma~\ref{#1}}
\newcommand{\eqnref}[1]{~(\ref{#1})}
\newcommand{\germ}{\mathfrak}
\newtheorem{thm}{Theorem}[section]
\newtheorem{lem}[thm]{Lemma}
\theoremstyle{definition}
\newtheorem{cor}[thm]{Corollary}
\newtheorem{prop}[thm]{Proposition}
\subjclass{Primary 17B67, 81R10}
\theoremstyle{rem}
\numberwithin{equation}{section}
\begin{document}
\title{The three point gauge algebra $\mathcal V\ltimes \mathfrak{sl}(2, \mathcal R)  \oplus\left( \Omega_{\mathcal R}/d{\mathcal R}\right)$  and an action on a Fock space.}
\author{Ben Cox}
\begin{abstract}  The three point current algebra $\mathfrak{sl}(2,\mathcal  R)$ where $\mathcal R=\mathbb C[t,t^{-1},u\,|\,u^2=t^2+4t ]$ and three-point Virasoro algebra both act on a previously constructed Fock space. In this paper we prove that the semi-direct product, i.e. the  gauge algebra acts on the Fock space as well. 
\end{abstract}
\keywords{Wakimoto Modules,  Three Point Algebras, Affine Lie Algebras, Fock Spaces}
\address{Department of Mathematics \\
The College of Charleston \\
66 George Street  \\
Charleston SC 29424, USA}\email{coxbl@cofc.edu}
\urladdr{http://coxbl.people.cofc.edu/papers/preprints.html}
\author{Elizabeth Jurisich}
\address{Department of Mathematics,
The College of Charleston,
Charleston SC 29424}
\email{jurisiche@cofc.edu}
\author{Renato A. Martins}
\address{Institute of Science and Tecnology,
Federal University of Sao Paulo,
Sao Jose dos Campos SP 12247014, Brazil}
\email{martins.renato@unifesp.br}

\maketitle
\section{Introduction}  

It is well known from the work of C. Kassel and J.L. Loday (see \cite{MR694130}, and \cite{MR772062}) that if $R$ is a commutative algebra and $\mathfrak g $ is a simple finite dimensional Lie algebra, both defined over the complex numbers, then the universal central extension $\hat{{\mathfrak g}}$ of ${\mathfrak g}\otimes R$ is the vector space $\left({\mathfrak g}\otimes R\right)\oplus \Omega_R^1/dR$ where $\Omega_R^1/dR$ is the space of K\"ahler differentials modulo exact forms (see \cite{MR772062}).  The vector space $\hat{{\mathfrak g}}$ is made into a Lie algebra by defining
$$
[x\otimes f,y\otimes g]:=[xy]\otimes fg+(x,y)\overline{fdg},\quad [x\otimes f,\omega]=0
$$
for $x,y\in\mathfrak g$, $f,g\in R$,  $\omega\in \Omega_R^1/dR$ and $(-,-)$ denotes the Killing form on $\mathfrak g$.  Here $\overline{a}$ denotes the image of $a\in\Omega^1_R$ in the quotient $\Omega^1_R/dR$.      A somewhat vague but natural question comes to mind
is whether there exists free field or Wakimoto type realizations of these algebras.  It is well known from the work of M. Wakimoto and B. Feigin and E. Frenkel what the answer is when $R$ is the ring of Laurent polynomials in one variable (see \cite{W} and \cite{MR92f:17026}).  We review our previous work in sections below about a realization in the setting where $\mathfrak g=\mathfrak{sl}(2,\mathbb C)$ and $R=\mathbb C[t,t^{-1},u|u^2=t^2+4t]$ is the three point algebra (see \cite{MR3245847}).  This is an example of a Krichever-Novikov algebra.

   In Kazhdan and Luszig's explicit study of the tensor structure of modules for affine Lie algebras (see \cite{MR1186962} and \cite{MR1104840}) the ring of functions regular everywhere except at a finite number of points appears naturally.  In particular in the monograph  \cite[Ch. 12]{MR1849359} algebras of the form $\oplus _{i=1}^n\mathfrak g((t-x_i))\oplus\mathbb Cc$ appear in the description of the conformal blocks.  These contain the {\it $n$-point algebras} $ \mathfrak g\otimes \mathbb C[(t-x_1)^{-1},\dots, (t-x_N)^{-1}]\oplus\mathbb Cc$ modulo part of the center $\Omega_R/dR$.   M. Bremner explicitly described the universal central extension of such an algebra in \cite{MR1261553}. This too is an example of a Krichever-Novikov algebra.
   
This is a Krichiver-Novikov algebra of the form $\text{Der}(R)$ where $R$ is the ring of meromorphic functions on a Riemann surface with a finite number of points removed. 
 But since  $R=\mathbb C[t,t^{-1},u|u^2=t^2+4t]$, we call the algebra $\text{Der}(R)$ and its central extensions, when  $R=\mathbb C[t,t^{-1},u|u^2=t^2+4t]$, the three point Virasoro algebra.

  The terminology for the {\it gauge algebra} comes from the book \cite{MR1629472}.  It is by definition the semi-direct product of $\mathfrak g\otimes R$ and $\text{Der}(R)$. We use the above names because this terminology is precise and one can use it to distinguish which algebra one is writing about from amongst the plethora of other wonderful Krichever-Novikov algebras.  \color{black}   
   
   \color{black}
   
   Consider now the Riemann sphere $\mathbb  C\cup\{\infty\}$ with coordinate function $s$ and fix three distinct points $a_1,a_2,a_3$ on this Riemann sphere.    Let $R$ denote the ring of rational functions with poles only in the set $\{a_1,a_2,a_3\}$.  It is known that the automorphism group $PGL_2(\mathbb C)$ of $\mathbb C(s)$ is simply 3-transitive and $R$ is a subring of $\mathbb C(s)$, so that $R$ is isomorphic to the ring of rational functions with poles at $\{\infty,0,1,a\}$.  Motivated by this isomorphism one sets $a=a_4$ and here the {\it $4$-point ring} is $R=R_a=\mathbb C[s,s^{-1},(s-1)^{-1},(s-a)^{-1}]$ where $a\in\mathbb C\backslash\{0,1\}$.    Let $S:=S_b=\mathbb C[t,t^{-1},u]$ where $u^2=t^2-2bt+1$ with $b$ a complex number not equal to $\pm 1$.  Then M. Bremner has shown us that $R_a\cong S_b$. 
As the later, being $\mathbb Z_2$-graded, is a cousin to super Lie algebras, and  is thus more immediately amendable to the theatrics of conformal field theory.  Moreover Bremner has given an explicit  description of the universal central extension of $\mathfrak g\otimes R$, in terms of ultraspherical (Gegenbauer) polynomials where $R$ is the four point algebra (see \cite{MR1249871}).      In \cite{MR2373448} the first author gave a realization for the four point algebra where the center acts nontrivially. 

In his study of the elliptic affine Lie algebras, $\mathfrak{sl}(2, R)  \oplus\left( \Omega_R/dR\right)$ where $R=\mathbb C[x,x^{-1},y\,|\,y^2=4x^3-g_2x-g_3]$, M. Bremner has also explicitly described the universal central extension of this algebra in terms of Pollaczek polynomials (see  \cite{MR1303073}).   Essentially the same algebras appear in recent work of A. Fialowski and M. Schlichenmaier \cite{FailS} and \cite{MR2183958}.  Together with Andr\'e Bueno and Vyacheslav Futorny, the first author described free field type realizations of the elliptic Lie algebra where $R=\mathbb C[t,t^{-1},u\,|, u^2=t^3-2bt^2-t]$, $b\neq \pm 1$ (see \cite{MR2541818}).

 Below we look at the three point algebra case where $R$ denotes the ring of rational functions with poles only in the set $\{a_1,a_2,a_3\}$. This algebra is isomorphic to $\mathbb C[s,s^{-1},(s-1)^{-1}]$. M. Schlichenmaier has a slightly different description of the three point algebra as $\mathbb C[(z^2-a^2)^k,z(z^2-a^2)^k\,|\, k\in\mathbb Z]$ where $a\neq 0$ (see \cite{MR2058804}). 
We show that $R\cong \mathbb C[t,t^{-1},u\,|\,u^2=t^2+4t]$ and thus looks more like $S_b$ above.
 The first two authors (see \cite{MR3245847}, Theorem 5.1)  provides a natural free field realization in terms of a $\beta$-$\gamma$-system and the oscillator algebra of the three point affine Lie algebra when $\mathfrak g=\mathfrak{sl}(2,\mathbb C)$.   Just as in the case of intermediate Wakimoto modules defined in
\cite{ MR2271362}, there are two different realizations depending on two different normal orderings.  Besides M. Bermner's article mentioned above, other work on the universal central extension of $3$-point algebras can be found in \cite{MR2286073}. Previous related work on highest weight modules of $\mathfrak{sl}(2,R)$ can be found in H. P. Jakobsen and V. Kac \cite{JK}.

The three point algebra is perhaps the simplest non-trivial example of a Krichever-Novikov algebra beyond an affine Kac-Moody algebra 
 (see \cite{MR902293}, \cite{MR925072}, \cite{MR998426}).  A fair amount of interesting and 
fundamental work has be done by Krichever, Novikov, Schlichenmaier, and Sheinman on the representation theory of the Krichever-Novikov algebras. 
 In particular Wess-Zumino-Witten-Novikov theory and analogues of the Knizhnik-Zamolodchikov equations are developed for  these algebras 
(see the survey article \cite{MR2152962}, and for example \cite{MR1706819}, \cite{MR1706819},\cite{MR2072650},\cite{MR2058804},\cite{MR1989644}, and \cite{MR1666274}). 

The initial motivation for the use of Wakimoto's realization was to prove a conjecture of V. Kac and D. Kazhdan on the character of certain irreducible representations of affine Kac-Moody algebras at the critical level (see \cite{W} and \cite{MR2146349}). Another motivation for constructing free field realizations is that they are used to provide integral solutions to the KZ-equations (see for example \cite{MR1077959} and \cite{MR1629472} and their references).  A third is that they are used to help in determining the center of a certain completion of the enveloping algebra of an affine Lie algebra at the critical level which is an important ingredient in the geometric Langland's correspondence \cite{MR2332156}.  Yet a fourth is that free field realizations of an affine Lie algebra appear naturally in the context of the generalized AKNS hierarchies \cite{MR1729358}.

The outline of the paper is as follows.  In section two we pin down the module structure of how the automorphism group of $R$ acts on the center $\Omega_R/dR$.  The third section is background material on the $\beta$-$\gamma$ system and the $3$-point Heisenberg algebra and part of  their respective representation theories. In sections four and five we recall material form our previous work on the $3$-point Virasoro algebra and the $3$-point current algebra and their respective representation theories.  In section six we put these representations together to get conditions on the action of  the gauge algebra on the corresponding Fock space so as to get a well defined representation.

Although we couldn't produce representations for which all of the center acts nontrivially, we will explore in future work whether there is an ambient gerbe lurking in the background. 

The authors would like to thank Murray Bremner for bringing to their attention the need for more research on Krichever-Novikov algrabras and the first two authors would also like to thank the Mathematics Department at the College of Charleston for a summer Research and Development Grant which supported this work.   The first author is partially supported by a collaboration grant from the Simons Foundation (\#319261) and  he would like to thank them for their generous support. The third author was supported by FAPESP grant (2014/09310-5).

\section{The $3$-point ring and its group of automorphisms.}
The three point algebra has at least four incarnations. 
\subsection{Three point algebras}  Fix $0\neq a\in\mathbb C$.
Let \begin{align*}
\mathcal S&:=\mathbb C[s,s^{-1},(s-1)^{-1} ] ,\\
 \mathcal R&:=\mathbb C[t,t^{-1},u\,|\, u^2=t^2+4t], \\
 \mathcal A&:=\mathcal A_a=\mathbb C[(z^2-a^2)^k,z(z^2-a^2)^k\,|\,k\in\mathbb Z].
 \end{align*}
 Note that Bremner introduced the ring $\mathcal S$ and Schichenmaier introduced $\mathcal A$ (see \cite{MR2058804}).  Variants of $\mathcal R$ were introduced by Bremner for elliptic and $3$-point algebras.  The last incarnation of the three point algebra, which we will not review, is due to G. Benkart and P. Terwilliger \cite{MR2286073}\color{black}.  This later one is a bit mysterious to us.
\begin{lem} 
\begin{enumerate}
\item There is an isomorphism of rings $
\phi^{-1}:\mathcal R\to \mathcal S$ whereby $t\mapsto s^{-1}(s-1)^2=s^{-1}-2+s$, and $u\mapsto s-s^{-1}$.
\item The rings $\mathcal R$ and $\mathcal A$ are isomorphic where the isomorphism is implemented by $z\mapsto a(1+t+u)$.
\end{enumerate}
\end{lem}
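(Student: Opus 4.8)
The plan is to establish both isomorphisms by exhibiting explicit mutually inverse ring homomorphisms; nothing deep is needed beyond careful use of the relation $u^2=t^2+4t$ (equivalently $u^2-t^2=4t$) in $\mathcal R$. For part (1), I would first verify that $t\mapsto s+s^{-1}-2=s^{-1}(s-1)^2$, $u\mapsto s-s^{-1}$ defines a ring homomorphism $\psi\colon\mathcal R\to\mathcal S$. Writing $\mathcal R=\mathbb C[t,t^{-1}][u]/(u^2-t^2-4t)$, this reduces to observing that $s^{-1}(s-1)^2$ is a unit in $\mathcal S$ (with inverse $s(s-1)^{-2}$) and that the image of $u$ squares to the image of $t^2+4t$, which is the identity $(s-s^{-1})^2=s^2+s^{-2}-2=(s+s^{-1}-2)^2+4(s+s^{-1}-2)$. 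Solving $t=s+s^{-1}-2$, $u=s-s^{-1}$ for $s$ suggests defining $\phi\colon\mathcal S\to\mathcal R$ by $s\mapsto 1+\tfrac12(t+u)$; since $\mathcal S$ is the localization of $\mathbb C[s]$ at $s(s-1)$, this is a well-defined homomorphism once one checks that the images of $s$ and of $s-1=\tfrac12(t+u)$ are units in $\mathcal R$, which both follow from $u^2-t^2=4t$:
\begin{gather*}
\Bigl(1+\tfrac12(t+u)\Bigr)\Bigl(1+\tfrac12(t-u)\Bigr)=1+t+\tfrac14(t^2-u^2)=1,\\
\tfrac12(t+u)\cdot\frac{u-t}{2t}=\frac{u^2-t^2}{4t}=1.
\end{gather*}
A short check on the generators $t$, $u$, $s$ then gives $\phi\circ\psi=\mathrm{id}_{\mathcal R}$ and $\psi\circ\phi=\mathrm{id}_{\mathcal S}$.

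For part (2), the first step is to identify $\mathcal A=\mathcal A_a$ concretely: since $z=z(z^2-a^2)^0$ and $(z^2-a^2)^{-1}$ lie in $\mathcal A_a$, and $(z\mp a)^{-1}=(z\pm a)(z^2-a^2)^{-1}$, one has $\mathcal A_a=\mathbb C[z,(z-a)^{-1},(z+a)^{-1}]$, the ring of rational functions in $z$ regular away from $\pm a$. Next, composing with $\phi^{-1}$ from part (1), the prescribed assignment $z\mapsto a(1+t+u)$ becomes $z\mapsto a(2s-1)$, since $\phi^{-1}(t+u)=2(s-1)$. Finally, the affine change of coordinate $z=a(2s-1)$, that is $s=\tfrac{1}{2a}(z+a)$, is an isomorphism of polynomial rings $\mathbb C[s]\xrightarrow{\ \sim\ }\mathbb C[z]$ taking $s$ to $\tfrac{1}{2a}(z+a)$ and $s-1$ to $\tfrac{1}{2a}(z-a)$; it therefore carries the multiplicative set generated by $s$ and $s-1$ to the one generated by $z+a$ and $z-a$ up to units, and hence extends to an isomorphism of localizations $\mathcal S=\mathbb C[s,s^{-1},(s-1)^{-1}]\xrightarrow{\ \sim\ }\mathbb C[z,(z+a)^{-1},(z-a)^{-1}]=\mathcal A_a$. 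Composing this with $\phi^{-1}$ yields the asserted isomorphism between $\mathcal R$ and $\mathcal A_a$, and tracing the generator $z$ through the composite $\mathcal A_a\to\mathcal S\xrightarrow{\phi}\mathcal R$ recovers $z\mapsto a(2s-1)\mapsto a(1+t+u)$.

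The computations are all elementary, so I do not anticipate a genuine obstacle; the only thing that needs care is the bookkeeping — using $u^2-t^2=4t$ consistently in part (1), and in part (2) checking that inverting $s$ and $s-1$ matches precisely with inverting $z-a$ and $z+a$, i.e. that the three distinguished points $\{0,1,\infty\}$ for $\mathcal S$ are sent to $\{-a,a,\infty\}$ for $\mathcal A_a$ under $z=a(2s-1)$.
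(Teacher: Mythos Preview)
Your proposal is correct and follows essentially the same route as the paper: both proofs exhibit the explicit inverse $\phi(s)=\tfrac{t+2+u}{2}$ in part (1), and both prove part (2) by first identifying $\mathcal A_a=\mathbb C[z,(z-a)^{-1},(z+a)^{-1}]$ and then applying the affine substitution $z=a(2s-1)$ to reduce to part (1). Your write-up is in fact a bit more careful than the paper's in explicitly verifying that the images of $s$ and $s-1$ are units in $\mathcal R$.
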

\begin{proof} 
(1). Let $\bar f: \mathbb C[t,u] \rightarrow \mathcal S$ be the ring homomorphism defined $\bar f(t)=s^{-1}(s-1)^2=s-2+s^{-1}$, $\bar f(u)= s-s^{-1}$.

 We first check that 
$$
\bar f(u^2-(t^2+4t))=(s-s^{-1})^2-(s-2+s^{-1})^2-4(s-2+s^{-1})=0
$$ 
and $\bar f(t)=s^{-1}(s-1)^2$ is invertible in $\mathcal S$.  Hence the map $\bar f$ descends to a well defined ring homomorphism $f:\mathcal R\to \mathcal S$.  To show that it is onto we essentially solve for $s$ and $s^{-1}$ in terms of $t$ and $u$.  The inverse ring homomorphism of $f$ is $\phi:\mathcal S\to \mathcal R$ given by 
$$
\phi(s)=\frac{t+2+u}{2},\quad \phi(s^{-1})=\frac{t+2-u}{2}.
$$
In particular $\displaystyle{\phi((s-1)^{-1})=\frac{t^{-1}u-1}{2}}$.

For part (2) observe that  $\mathcal A=\mathbb C[z,(z-a)^{-1},(z+a)^{-1}]$ which after setting $z=2as-a$ we get $\mathcal A=\mathbb C[s,s^{-1},(s-1)^{-1}]$.    Thus an isomorphism between $\mathcal A$ and $\mathcal R$ is implemented by the assignment $z\mapsto a(1+t+u)$, $(z+a)^{-1}\mapsto\displaystyle{\frac{t+2-u}{4a}}$ and $(z-a)^{-1}\mapsto   \displaystyle{ \frac{t^{-1}u-1}{4a}}$.  \end{proof}

The group of automorphisms of $\mathcal S$ was described in \cite{MR3211093} among other automorphism groups.  In this case the group of algebraic automorphisms $\text{Aut}(\mathcal S)$ is the dihedral group of order $6$ with elements $\iota, \varphi, \pi_i$, $i=1,2,3,4$ where $\iota$ is the identity map and 
\begin{gather*}
\varphi(s)=1-s,\quad \pi_1(s)=\frac{1}{s},\quad \pi_2(s)=\frac{1}{1-s},\\  
\pi_3(s)=\frac{s}{s-1},\quad 
\pi_4(s)=\frac{s-1}{s}.\end{gather*}
These are all M\"obius transformations with $\varphi$ of order 2 and $\pi_2$ of order 3 with 
$
\phi\pi_2\phi=\pi_2^{-1}$.
\begin{cor}  Setting $\psi=\phi\varphi\phi^{-1}$ and $\tau_2=\phi\pi_2\phi^{-1}$ one obtains, the group $\text{Aut}(\mathcal R)$ is the dihedral group $D_3$ of order $6$ where \begin{gather}
 \psi(t)=\frac{-t^{-1}u-3-t-u}{2},\quad \psi(u)=\frac{t^{-1}u-1-t-u}{2},\\
 \tau_2(t)=\frac{-t^{-1}u-3-t-u}{2} ,\quad  \tau_2(u) =\frac{-t^{-1}u+t+1+u}{2},  
 \end{gather}
 \end{cor}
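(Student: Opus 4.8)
The plan is to transport the group $\text{Aut}(\mathcal S)$ across the ring isomorphism $\phi\colon\mathcal S\to\mathcal R$ produced in the Lemma above. First I would note the general fact that for any ring isomorphism $\phi\colon\mathcal S\to\mathcal R$ the map $\sigma\mapsto\phi\sigma\phi^{-1}$ is a group isomorphism $\text{Aut}(\mathcal S)\to\text{Aut}(\mathcal R)$; since $\text{Aut}(\mathcal S)\cong D_3$ with generators $\varphi$ (order $2$) and $\pi_2$ (order $3$) satisfying $\varphi\pi_2\varphi=\pi_2^{-1}$, as recalled from \cite{MR3211093}, it follows at once that $\text{Aut}(\mathcal R)\cong D_3$, generated by $\psi:=\phi\varphi\phi^{-1}$ and $\tau_2:=\phi\pi_2\phi^{-1}$ with $\psi\tau_2\psi=\tau_2^{-1}$. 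This already settles the structural part of the Corollary; what remains is to make $\psi$ and $\tau_2$ explicit.

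For the formulas I would simply substitute. From the Lemma, $\phi^{-1}(t)=s-2+s^{-1}$, $\phi^{-1}(u)=s-s^{-1}$, and $\phi(s)=(t+2+u)/2$, so $\phi(1-s)=-(t+u)/2$. The one computation that is not purely mechanical is inverting $t+u$ inside $\mathcal R$: from the defining relation $u^2=t^2+4t$, i.e.\ $(u-t)(u+t)=4t$, one gets $\phi\big((1-s)^{-1}\big)=-2/(t+u)=(1-t^{-1}u)/2$, which is consistent with the value of $\phi((s-1)^{-1})$ already recorded in the Lemma. With these substitutions in hand one computes, for instance, $\psi(t)=\phi\varphi(s-2+s^{-1})=\phi\big((1-s)^{-1}-1-s\big)=\tfrac{-t^{-1}u-3-t-u}{2}$, and $\psi(u)$, $\tau_2(t)$, $\tau_2(u)$ follow in exactly the same way; note that $\varphi$ and $\pi_2$ act identically on $s-2+s^{-1}=(1-s)+(1-s)^{-1}-1$, which is why $\psi(t)=\tau_2(t)$. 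As a sanity check I would verify directly that each of $\psi,\tau_2$ fixes $u^2-t^2-4t$, sends $t$ to a unit, and has the expected order.

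I expect no conceptual obstacle: the entire content is carried by the isomorphism $\phi$, which is already established. The only thing requiring care is the bookkeeping---manipulating the Laurent expressions in $s$ and correctly reducing modulo $u^2=t^2+4t$ when returning to $\mathcal R$---together with choosing the cleanest generators of $D_3$ to display, since an order-$2$ and an order-$3$ element suffice and so $\psi$ and $\tau_2$ determine the whole group.
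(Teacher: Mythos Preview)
Your proposal is correct and follows essentially the same approach as the paper: transport $\text{Aut}(\mathcal S)$ through the isomorphism $\phi$ and compute $\psi,\tau_2$ on the generators $t,u$ by direct substitution using $\phi^{-1}(t)=s-2+s^{-1}$, $\phi^{-1}(u)=s-s^{-1}$ and the known values of $\phi$. Your added remarks---the explicit statement that conjugation gives a group isomorphism, the observation that $\varphi$ and $\pi_2$ agree on $s-2+s^{-1}$ so $\psi(t)=\tau_2(t)$, and the sanity checks on orders and the relation $u^2=t^2+4t$---are welcome elaborations but do not change the line of argument.
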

\begin{proof}  The isomorphism $\phi^{-1}:\mathcal R\to \mathcal S$ induces the automorphisms above where $\psi=\phi\circ\varphi\circ\phi^{-1}$, and $\tau_i=\phi\circ\pi_i\circ\phi^{-1}$. 
 \begin{align*}
 \psi(t)&=\phi\circ\varphi\circ\phi^{-1}(t)=\phi(\varphi(s^{-1}-2+s))=\phi\left(\frac{1}{1-s}-1-s\right) \\
 &=\frac{-t^{-1}u+1}{2}-1-\left(\frac{t+2+u}{2}\right)=\frac{-t^{-1}u-3-t-u}{2} \\  \\
 \psi(u)&=\phi\circ\varphi\circ\phi^{-1}(u)=\phi(\varphi(s-s^{-1}))=\phi\left(1-s-\frac{1}{1-s}\right) \\
 &=\frac{t^{-1}u-1}{2}+1- \frac{t+2+u}{2}=\frac{t^{-1}u-1-t-u}{2} 
   \end{align*}
 and 
 \begin{align*}
 \tau_2(t)&=\phi\circ\pi_2(s-2+s^{-1})=\phi^{-1}\left(\frac{1}{1-s}-1-s\right)\\  
 &=\frac{-t^{-1}u-3-t-u}{2} \\ 
 \tau_2(u)&=\phi\circ\pi_2(s-s^{-1})   \\ 
 &=\phi^{-1}\left(\frac{1}{1-s}-(1-s)\right)=\frac{-t^{-1}u+1}{2}-1+\left(\frac{t+2+u}{2}\right) \\
& =\frac{-t^{-1}u+t+1+u}{2}   \\ 
 \end{align*}
\end{proof}
\begin{cor}  The following gives the action of the group of automorphism of $\mathcal S$ on the center $\Omega_{\mathcal S}/d\mathcal S$.  The matrix for $\phi$ on the basis $\{\overline{s^{-1}ds},\overline{(s-1)^{-1}\,ds}\}$ is 
$$
\begin{pmatrix} 0 & 1 \\ 1 & 0\end{pmatrix}
$$

and for 
 $\pi_2$ is
$$
\begin{pmatrix} 0& 1 \\ -1 & -1 \end{pmatrix}
$$
and $\Omega_{\mathcal S}/d\mathcal S$ is irreducible as an $\text{Aut}(\mathcal S)\cong D_3$ module.
\end{cor}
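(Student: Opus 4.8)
The plan is to make the module $\Omega_{\mathcal S}/d\mathcal S$ completely explicit, read off the matrices of the generators $\varphi$ and $\pi_2$ of $\text{Aut}(\mathcal S)\cong D_3$ by a direct computation in the quotient, and then deduce irreducibility from the short list of complex irreducibles of $D_3\cong S_3$. For the first step: since $\mathcal S=\mathbb C[s,s^{-1},(s-1)^{-1}]$ is a localization of $\mathbb C[s]$, the module of K\"ahler differentials is free of rank one, $\Omega_{\mathcal S}=\mathcal S\,ds$. A partial fraction decomposition gives a $\mathbb C$-basis of $\mathcal S$ consisting of the monomials $s^n$ ($n\ge 0$), $s^{-n}$ ($n\ge 1$), and $(s-1)^{-n}$ ($n\ge 1$); since $s^n\,ds=d\!\bigl(s^{n+1}/(n+1)\bigr)$ for $n\ne -1$ and $(s-1)^{-n}\,ds=d\!\bigl((s-1)^{1-n}/(1-n)\bigr)$ for $n\ge 2$, every class in $\Omega_{\mathcal S}/d\mathcal S$ lies in the span of $\overline{s^{-1}\,ds}$ and $\overline{(s-1)^{-1}\,ds}$. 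Linear independence of these two classes follows from the residue functionals at $s=0$ and $s=1$ (equivalently from Kassel's dimension formula), so $\{\overline{s^{-1}\,ds},\overline{(s-1)^{-1}\,ds}\}$ is a basis and $\dim_{\mathbb C}\Omega_{\mathcal S}/d\mathcal S=2$.

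Next I would compute the action, using that an automorphism $\sigma$ of $\mathcal S$ acts on differentials by $\sigma(f\,dg)=\sigma(f)\,d\sigma(g)$ and that this descends to the quotient. For $\varphi(s)=1-s$ (the reflection; the ``$\phi$'' in the statement should read $\varphi$) one has $d\varphi(s)=-ds$, so $\varphi(s^{-1}\,ds)=-(1-s)^{-1}\,ds=(s-1)^{-1}\,ds$ and $\varphi\bigl((s-1)^{-1}\,ds\bigr)=s^{-1}\,ds$, giving the stated matrix $\left(\begin{smallmatrix}0&1\\1&0\end{smallmatrix}\right)$. For $\pi_2(s)=(1-s)^{-1}$ one has $d\pi_2(s)=(1-s)^{-2}\,ds$; substituting and reducing by partial fractions — the only identity needed is $\frac{1}{s(1-s)}=s^{-1}-(s-1)^{-1}$ — yields $\pi_2(\overline{s^{-1}\,ds})=-\overline{(s-1)^{-1}\,ds}$ and $\pi_2\bigl(\overline{(s-1)^{-1}\,ds}\bigr)=\overline{s^{-1}\,ds}-\overline{(s-1)^{-1}\,ds}$, i.e.\ the matrix $\left(\begin{smallmatrix}0&1\\-1&-1\end{smallmatrix}\right)$. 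As a consistency check one verifies that these two matrices satisfy the defining relations of $D_3$, which is immediate.

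Finally, for irreducibility: the complex irreducibles of $D_3\cong S_3$ are the trivial and sign characters, both one-dimensional, together with the two-dimensional standard representation, so a two-dimensional module is either irreducible or a direct sum of two characters. The order-three element $\pi_2$ lies in the commutator subgroup of $D_3$, hence every one-dimensional character — and every direct sum of such — is trivial on it; but the matrix of $\pi_2$ computed above is not the identity, its characteristic polynomial being $\lambda^2+\lambda+1$. Therefore $\Omega_{\mathcal S}/d\mathcal S$ has no one-dimensional submodule and must be the standard irreducible $D_3$-module. (Alternatively one checks directly that neither eigenvector $(1,1)$, $(1,-1)$ of the matrix of $\varphi$ is an eigenvector of the matrix of $\pi_2$, so the two matrices share no common eigenline.) The computations are routine; the one point that wants a little care is the first step — establishing that the quotient is exactly two-dimensional with the indicated basis — which is standard for localizations of $\mathbb C[s]$.
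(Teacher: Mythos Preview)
Your proof is correct and follows essentially the same line as the paper for the matrix computations: direct substitution of the M\"obius maps into the basis differentials and simplification in the quotient. You also supply a preliminary step---the partial-fraction/residue argument pinning down the two-dimensional basis---which the paper simply takes as given. The one genuine methodological difference is in the irreducibility argument: the paper computes the character of the module (values $2$, $0$, $-1$ on the conjugacy classes $\{1\}$, $\{\varphi,\varphi\pi_2,\varphi\pi_2^2\}$, $\{\pi_2,\pi_2^2\}$) and matches it against the character of the unique two-dimensional irreducible of $D_3$, citing Serre's book; you instead observe that $\pi_2$ lies in the commutator subgroup and hence acts trivially on any one-dimensional constituent, while the computed matrix of $\pi_2$ is visibly not the identity (or equivalently, that the eigenvectors of $\varphi$ are not eigenvectors of $\pi_2$). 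Your route is self-contained and avoids any appeal to a character table; the paper's route has the minor advantage of identifying the isomorphism class explicitly. For a group as small as $D_3$ the two are of comparable length and difficulty.
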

\begin{proof}
We calculate
\begin{align*}
\overline{\varphi(s^{-1})\otimes \varphi(s)}&= \overline{(1-s)^{-1}\otimes(1-s)}= \overline{(s-1)^{-1}\otimes s} \\
\overline{\varphi(s-1)^{-1}\otimes \varphi(s)}&= \overline{-s^{-1}\otimes (1-s)} =\overline{ s^{-1}\otimes s} \\
\end{align*}
Thus the matrix for $\phi$ on the basis $\{\overline{s^{-1}ds},\overline{(s-1)^{-1}\,ds}\}$ is 
$$
\begin{pmatrix} 0 & 1 \\ 1 & 0\end{pmatrix}.
$$
Then $\Omega_{\mathcal S}/d\mathcal S$ is a two dimensional irreducible representation for $\text{Aut}(\mathcal S)\cong D_3$.
Moreover
\begin{align*}
\overline{\pi_2(s^{-1})\otimes \pi_2(s)}&= \overline{ (1-s)\otimes  (1-s)^{-1}}= \overline{(s-1)^{-1}\otimes s} \\
\overline{\pi_2(s-1)^{-1}\otimes \pi_2(s)}&=- \overline{s^{-1}(s-1)\otimes (1-s)^{-1}} =\overline{ s^{-1}(s-1)^{-1}\otimes s} \\
&=-\overline{ s^{-1}\otimes s}-\overline{ (s-1)^{-1}\otimes s} \\
\end{align*}
Then the matrix for $\pi_2$ is
$$
\begin{pmatrix} 0& 1 \\ -1 & -1 \end{pmatrix}.
$$

We want to know whether $\Omega_{\mathcal S}/d\mathcal S$ is an irreducible representation for $\text{Aut}(\mathcal S)\cong D_3$.
To that end we calculate the character of this action $\chi_{\Omega_{\mathcal S}/d\mathcal S}$.  The conjugacy classes of $\text{Aut}(\mathcal S)$ are 
$$
\{1\},\quad \{\phi,\phi\pi_2,\phi\pi_2^2\},\quad  \{\pi_2,\pi_2^2\}.
$$
and the character $\chi_{\Omega_{\mathcal S}/d\mathcal S}$, then has values 
$$
2,\enspace 0,\enspace -1
$$
respectively on these conjugacy classes.  Now this is the same as the character for the unique 2-dimensional irreducible representation for $D_3$, so $\Omega_{\mathcal S}/d\mathcal S$ is irreducible (see \cite{MR0450380}). 
\end{proof}

\subsection{The Universal Central Extension of the Current Algebra $\mathfrak g\otimes  R$.}
 Let $R$ be a commutative algebra defined over $\mathbb C$.
Consider the left $R$-module  $F=R\otimes R$ with left action given by $f( g\otimes h ) = f g\otimes h$ for $f,g,h\in R$ and let $K$  be the submodule generated by the elements $1\otimes fg  -f \otimes g -g\otimes f$.   Then $\Omega_R^1=F/K$ is the module of {\it K\"ahler differentials}.  The element $f\otimes g+K$ is traditionally denoted by $fdg$.  The canonical map $d:R\to \Omega_R^1$ is given by $df  = 1\otimes f  + K$.  The {\it exact differentials} are the elements of the subspace $dR$.  The coset  of $fdg$  modulo $dR$ is denoted by $\overline{fdg}$.  As C. Kassel showed the universal central extension of the current algebra 
$\mathfrak g\otimes R$
 where $\mathfrak g$ is a simple finite dimensional Lie algebra defined over $\mathbb C$, is the vector space 
 \begin{equation}\hat{\mathfrak g}=(\mathfrak g\otimes R)\oplus \Omega_R^1/dR  \label{3pointDef} \end{equation}
 with Lie bracket given by
\begin{equation*}
[x\otimes f,Y\otimes g]=[xy]\otimes fg+(x,y)\overline{fdg},  [x\otimes f,\omega]=0,  [\omega,\omega']=0,
\end{equation*}
  where $x,y\in\mathfrak g$, and $\omega,\omega'\in \Omega_R^1/dR$ and $(x,y)$  denotes the Killing  form  on $\mathfrak g$.  
  
There are at least four incarnations of the three point algebras, three of which are defined as $\mathfrak g\otimes R\oplus \Omega_R/dR$ where $R=\mathcal S, \mathcal R,\mathcal A$ given above.  The forth incarnation appears in the work of G. Benkart and P. Terwilliger given in terms of the tetrahedron algebra (see \cite{MR2286073}). 
We will only work with $R=\mathcal R$.

\begin{prop}[\cite{MR1261553}, see also \cite{MR1249871}]\label{uce}  Let $\mathcal R$ be as above, the ring  $\Omega_\mathcal R^1/d\mathcal R$ has the following properties:
\begin{enumerate}
\item The set 
$
\{\omega_0:=\overline{t^{-1} dt},\enspace \omega_1:=\overline{t^{-1}u\,dt}\}
$
 is a basis of $\Omega_\mathcal R^1/d\mathcal R$.
\item $\overline{t^k\,dt^l} =-k\delta_{l,-k}\omega_0 $
\item  $ \overline{t^ku\,d(t^lu)}=\left((l+1)\delta_{k+l,-2}+(4l +2)\delta_{k+l,-1}\right)\omega_0 $ 
\item $ \overline{t^k\,d(t^lu)}=  \mu_{k,l}\omega_1 $ where $\mu_{k,l}=k\frac{(-1)^{k+l+1}2^{k+l}(2(k+l)-1)!!}{(k+l+1)!}$. 

 \end{enumerate}
 where for an integer $n$ one defines recursively 
 \begin{equation}
 n!!:=\begin{cases}
 1 & \quad \text{ if } n=0\text{ or }n=1 \\
 n\times (n-2)!! & \quad \text{ if } n\geq 2.
 \end{cases}
 \end{equation}
\end{prop}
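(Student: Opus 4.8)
The plan is to do all computations inside $\Omega^1_{\mathcal R}$, using the presentation coming from $\mathcal R=\mathbb C[t,t^{-1},u]/(u^2-t^2-4t)$: since this defining ideal is principal, $\Omega^1_{\mathcal R}$ is the $\mathcal R$-module generated by $dt$ and $du$ modulo the single relation $u\,du=(t+2)\,dt$ obtained by differentiating $u^2=t^2+4t$. Multiplying that relation by $u$ and substituting $u^2=t^2+4t$ produces the auxiliary identity $(t^2+4t)\,du=(t+2)\,u\,dt$, which will drive every reduction. Since $\mathcal R=P\oplus Pu$ as a module over $P:=\mathbb C[t,t^{-1}]$, the forms $\{t^n\,dt,\ t^nu\,dt,\ t^n\,du:n\in\mathbb Z\}$ span $\Omega^1_{\mathcal R}$ over $\mathbb C$ (the products $t^nu\,du$ being absorbed by the first relation), while $d\mathcal R$ is spanned by $d(t^n)=nt^{n-1}\,dt$ and $d(t^nu)=nt^{n-1}u\,dt+t^n\,du$; in particular $\overline{t^n\,du}=-n\,\overline{t^{n-1}u\,dt}$ in $\Omega^1_{\mathcal R}/d\mathcal R$, which I will use repeatedly.

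Parts (2) and (3) are then short computations. For (2), $t^k\,dt^l=l\,t^{k+l-1}\,dt$, which is exact when $k+l\ne 0$ (it equals $\tfrac{l}{k+l}d(t^{k+l})$) and equals $-k\,t^{-1}\,dt$ when $k+l=0$; since $\omega_0=\overline{t^{-1}\,dt}$ this gives $\overline{t^k\,dt^l}=-k\,\delta_{l,-k}\,\omega_0$ (and in particular $\overline{t^m\,dt}=\delta_{m,-1}\,\omega_0$). For (3), expand $t^ku\,d(t^lu)=l\,t^{k+l-1}u^2\,dt+t^{k+l}u\,du$ and substitute $u^2=t^2+4t$, $u\,du=(t+2)\,dt$ to get $(l+1)t^{k+l+1}\,dt+(4l+2)t^{k+l}\,dt$; passing to $\Omega^1_{\mathcal R}/d\mathcal R$ yields $((l+1)\delta_{k+l,-2}+(4l+2)\delta_{k+l,-1})\,\omega_0$.

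For (4) I would first remove the $du$-term: from $\overline{t^{k+l}\,du}=-(k+l)\,\overline{t^{k+l-1}u\,dt}$ one gets $\overline{t^k\,d(t^lu)}=l\,\overline{t^{k+l-1}u\,dt}+\overline{t^{k+l}\,du}=-k\,\overline{t^{k+l-1}u\,dt}$, so everything reduces to evaluating $\overline{t^m u\,dt}$. Multiplying the auxiliary identity by $t^{m-1}$ and pushing to $\Omega^1_{\mathcal R}/d\mathcal R$ (again via $\overline{t^n\,du}=-n\,\overline{t^{n-1}u\,dt}$) gives the recursion $-(m+2)\,\overline{t^mu\,dt}=(4m+2)\,\overline{t^{m-1}u\,dt}$ for all $m\in\mathbb Z$. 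Read at $m=-2$ it forces $\overline{t^{-3}u\,dt}=0$, hence $\overline{t^mu\,dt}=0$ for every $m\le-3$; for $m\ge-2$ one telescopes it from $\overline{t^{-1}u\,dt}=\omega_1$ (the step at $m=-1$ giving $\overline{t^{-2}u\,dt}=\tfrac12\omega_1$), and rewriting the resulting product in terms of double factorials gives $\overline{t^mu\,dt}=(-1)^{m+1}\tfrac{2^{m+1}(2m+1)!!}{(m+2)!}\,\omega_1$. Substituting $m=k+l-1$ into $\overline{t^k\,d(t^lu)}=-k\,\overline{t^{k+l-1}u\,dt}$ reproduces $\mu_{k,l}\,\omega_1$; the stated formula for $\mu_{k,l}$ is to be read with the conventions $(-1)!!=1$, $(-3)!!=-1$ (extending $n!!=n(n-2)!!$) and $1/p!=0$ for $p$ a negative integer, under which $\mu_{k,l}$ vanishes precisely when $k+l\le-2$, matching the vanishing just found.

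Finally, for (1): the computations in (2)--(4) show that each spanning class $\overline{t^n\,dt},\ \overline{t^nu\,dt},\ \overline{t^n\,du}$ lies in $\mathbb C\omega_0+\mathbb C\omega_1$, so $\dim\Omega^1_{\mathcal R}/d\mathcal R\le 2$; it remains to prove that $\omega_0,\omega_1$ are linearly independent. The quickest route is to transport along the ring isomorphism $\phi\colon\mathcal S\to\mathcal R$ of the Lemma, which induces $\Omega^1_{\mathcal S}/d\mathcal S\cong\Omega^1_{\mathcal R}/d\mathcal R$, and to invoke that $\Omega^1_{\mathcal S}/d\mathcal S$ is two-dimensional (the classical partial-fractions computation on $\mathbb C[s,s^{-1},(s-1)^{-1}]$ recalled above, whose surviving classes are the residues at $s=0$ and $s=1$). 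A self-contained alternative is to check first that $\Omega^1_{\mathcal R}$ is free of rank two over $P$ on $\{dt,\zeta\}$, where $\zeta:=u\,dt-t\,du$ (so $u\,dt=\tfrac{t+4}{2}\zeta$ and $du=\tfrac{t+2}{2t}\zeta$), then compute $d\mathcal R=\{\,p'(t)\,dt+h(q)\,\zeta:p,q\in P\,\}$ with $h(t^n)=\tfrac{n+1}{2}t^n+(2n+1)t^{n-1}$, and finally note that $P/\{p':p\in P\}$ and $P/h(P)$ are each one-dimensional — the latter because a linear functional on $P$ annihilating all $h(t^n)$ satisfies a recursion with a one-parameter solution space. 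Either way $\dim\Omega^1_{\mathcal R}/d\mathcal R=2$ and $\{\omega_0,\omega_1\}$ is a basis. This independence claim is the main obstacle: identities (2)--(4) are forced algebraic consequences and say nothing about whether $\omega_0,\omega_1$ are nonzero, so some external input — the residue theorem on the rational curve with coordinate ring $\mathcal R$, equivalently the known dimension of $\Omega^1_{\mathcal S}/d\mathcal S$ — is unavoidable.
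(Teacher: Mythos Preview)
Your argument is correct and, for the spanning part of (1) and the derivation of the recursion on $\overline{t^mu\,dt}$, it is essentially the paper's proof: the paper derives exactly the relation $(k+3)\overline{t^{k+1}u\,dt}+(4k+6)\overline{t^ku\,dt}\equiv 0$, which after the shift $m=k+1$ is your $-(m+2)\overline{t^mu\,dt}=(4m+2)\overline{t^{m-1}u\,dt}$, and then concludes spanning by $\{\omega_0,\omega_1\}$ in the same way. Your treatment of (2)--(4) is more explicit than the paper's (which simply points to Bremner's earlier computations), but the content is identical.

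The one genuine difference is the independence step. The paper invokes the Riemann--Roch theorem on the three-punctured sphere to get $\dim\Omega^1_{\mathcal R}/d\mathcal R=2$ and stops there. Your route (a), transporting along $\phi$ to $\mathcal S$ and citing the residue computation on $\mathbb C[s,s^{-1},(s-1)^{-1}]$, is really the same thing in disguise. Your route (b), however, is a more elementary and self-contained alternative: showing $\Omega^1_{\mathcal R}$ is free of rank two over $P=\mathbb C[t,t^{-1}]$ on $\{dt,\zeta\}$ with $\zeta=u\,dt-t\,du$, and then computing $P/\{p'\}$ and $P/h(P)$ directly, avoids any appeal to curve theory. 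This buys you a purely algebraic proof at the cost of a short auxiliary computation; the paper's appeal to Riemann--Roch is quicker but imports more machinery.
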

\begin{proof}  The proof follows almost exactly along the lines of \cite{MR1249871} and \cite{MR1261553} and would be omitted if not for the fact that we need some of the formulae that appear in the proof.  As $\mathcal R$ has basis $\{t^k,t^lu\,|\,k,l\in\mathbb Z\}$, $\Omega_{\mathcal R}$ has a spanning set given by the image in the quotient $F/K$ of the tensor product of these basis elements.  We know $\frac{1}{2}u\,d(u^2)=u^2\,du$.    Next we observe that since $u^2=t^2+4t$ one has in $\Omega_{\mathcal R}$,
\begin{align*}
u(t+2)\,dt=\frac{1}{2}u(2t+4)\,dt=\frac{1}{2}u\,d(u^2)=u^2\,du=(t^2+4t)\,du
\end{align*}
and after multiplying this on the left by $t^k$ we get
\begin{align}\label{recursion}
(t^{k+1}+2t^k)u\,dt-(t^{k+2}+4t^{k+1})\,du=0
\end{align}
in $\Omega_{\mathcal R}$.
Now
\begin{align}
d(t^k)&=kt^{k-1}\,dt, \notag \\
d(t^ku)&=t^k\enspace du+ kt^{k-1}u\,dt, \label{eqn1}
\end{align}
so that in the quotient $\Omega_{\mathcal R}/d\mathcal R$ we have by \eqnref{recursion} followed by \eqnref{eqn1}
\begin{align} 
0&\equiv (t^{k+1}+2t^k)u\,dt-(t^{k+2}+4t^{k+1})\,du\notag \\
&\equiv (t^{k+1}+2t^k)u\,dt-(-(k+2)t^{k+1} -4(k+1)t^{k} )u\,dt \notag\\
&\equiv  \left((k+3)t^{k+1}+(4k+6)t^k\right)u\,dt. \notag
\end{align}
Then 
\begin{equation}
t^k u\,dt\equiv -\frac{(k+3)}{(4k+6)}t^{k+1}u\, dt \mod d\mathcal R\label{eqn2}
\end{equation}
so that 
$$
t^{-k} u\,dt\equiv t^{-3} u\,dt\equiv 0\mod d\mathcal R,\quad k\geq 3,
$$
and 
\begin{equation}
t^{k+1}u\,dt\equiv - \frac{(4k+6)}{(k+3)}t^k u\,dt\mod   d\mathcal R, \notag
\end{equation}
so that $t^{k}u\,dt$ can be written in terms of $t^{-1}u\,dt$ for $k\geq -2$ modulo $d\mathcal R$. 
Thus $\Omega_{\mathcal R}$ is spanned as a left $\mathcal R$-module by $dt $ and $du$, furthermore 
\begin{align}
t^{k-1}\,dt&\equiv \frac{1}{k}d(t^k)\equiv 0\mod d\mathcal R, \text{ for }k\neq 0\label{eqn3}
\end{align}
By equations \eqnref{eqn1}, \eqnref{eqn2}, and \eqnref{eqn3} we have  $\Omega_{\mathcal R}/d\mathcal R$ is spanned by  $\{\overline{t^{-1}\,dt},\overline{t^{-1}u\,dt}\}$.
We know by the Riemann-Roch Theorem that the dimension of this space of K\"ahler differentials modulo exact forms on the sphere with three punctures has dimension 2 (see \cite{MR1261553}).  
\end{proof}


The following result is probably well known, but we couldn't find a source for it. 
\begin{prop}  Let $R$ be a commutative algebra over $\mathbb C$.  Define an action $\text{Der}(R)$ on $R\otimes R$ via the assignment
$$
D(a\otimes b):=D(a)\otimes b+a\otimes D(b)
$$
for $a,b\in R$.   This action descends to an action of $\text{Der}(R)$  on $\Omega_R/dR$. 
\end{prop}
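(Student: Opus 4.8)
The plan is to verify that the given action of $\mathrm{Der}(R)$ on the tensor product $R\otimes R$ preserves the submodule $K$ (so that it descends to an action on $\Omega_R^1 = (R\otimes R)/K$), and then that the induced action on $\Omega_R^1$ preserves the subspace $dR$ of exact differentials (so that it descends further to $\Omega_R^1/dR$). Throughout, the generators of $K$ are the elements $1\otimes fg - f\otimes g - g\otimes f$ for $f,g\in R$, so it suffices to check these are sent into $K$; similarly $dR$ is spanned by the elements $df = 1\otimes f + K$, so it suffices to check $D(df)\in dR$.

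\emph{Step 1: well-definedness on $\Omega_R^1$.} Fix $D\in\mathrm{Der}(R)$. Apply the formula $D(a\otimes b) = D(a)\otimes b + a\otimes D(b)$ to a typical generator of $K$ and use the Leibniz rule $D(fg) = D(f)g + f D(g)$. A direct expansion gives
\begin{align*}
D\bigl(1\otimes fg - f\otimes g - g\otimes f\bigr) &= 1\otimes(D(f)g + fD(g)) - D(f)\otimes g - f\otimes D(g) \\
&\quad - D(g)\otimes f - g\otimes D(f),
\end{align*}
and I would regroup the six terms on the right as
$$\bigl(1\otimes D(f)g - D(f)\otimes g - g\otimes D(f)\bigr) + \bigl(1\otimes fD(g) - f\otimes D(g) - D(g)\otimes f\bigr),$$
which is visibly a sum of two generators of $K$ (with $(f,g)$ replaced by $(D(f),g)$ and by $(D(g),f)$ respectively). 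Hence $D(K)\subseteq K$, and $D$ induces a well-defined linear map on $\Omega_R^1$; in the $fdg$ notation this map is $D(f\,dg) = D(f)\,dg + f\,d(D(g))$, i.e. $D$ acts on $\Omega_R^1$ by $D(f\,dg) = D(f)\,dg + f\,D(g)\,dg'$ — more cleanly, one checks $D(f\,dg) = D(f)\,dg + f\,d(Dg)$, the natural Lie derivative.

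\emph{Step 2: well-definedness on $\Omega_R^1/dR$ and the Lie algebra action.} On exact differentials, $D(df) = D(1\otimes f) + K = 1\otimes D(f) + K = d(D(f)) \in dR$, so $D(dR)\subseteq dR$ and the action descends to $\Omega_R^1/dR$. It remains to observe this is genuinely an action of the \emph{Lie algebra} $\mathrm{Der}(R)$, i.e. that $[D_1,D_2]$ acts as $D_1D_2 - D_2D_1$; this follows formally from the same identity on $R$ (the commutator of two derivations is a derivation, and the two-factor formula is functorial in $D$), so it transfers to $R\otimes R$ and hence to the quotients. I do not expect a serious obstacle here: the argument is essentially the statement that the de Rham-type differential $d\colon R\to\Omega_R^1/dR$ is natural in $R$ and that derivations act by Lie derivative. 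The one point requiring a moment's care — and the closest thing to a "main obstacle" — is the bookkeeping in Step 1, namely seeing that the six-term expansion regroups exactly into two $K$-generators with no leftover; once the regrouping above is written down this is immediate, and the rest is formal.
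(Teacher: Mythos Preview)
Your approach matches the paper's, and Step~2 is identical to it. However, Step~1 has a small but real gap: you assert that ``it suffices to check these are sent into $K$,'' referring to the elements $1\otimes fg - f\otimes g - g\otimes f$. But $K$ is defined in the paper as the \emph{$R$-submodule} of $R\otimes R$ generated by these elements, not the $\mathbb{C}$-subspace, and $D$ is only $\mathbb{C}$-linear. So showing that $D$ sends generators into $K$ does not by itself give $D(K)\subseteq K$.

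The paper closes this by computing $D\bigl(r(f\otimes g + g\otimes f - 1\otimes fg)\bigr)$ for arbitrary $r\in R$ and obtaining three pieces in $K$: the term $D(r)\cdot(\text{original generator})$ together with the $r$-multiples of the two generators you found. An equivalent one-line repair is to first observe the Leibniz-type identity $D(r\cdot x) = D(r)\cdot x + r\cdot D(x)$ on $R\otimes R$ (immediate from your formula on simple tensors), so that if $x$ is a generator with $D(x)\in K$, then $D(r\cdot x) = D(r)\cdot x + r\cdot D(x)\in K$ as well. Either way the fix is easy, but as written the ``it suffices'' claim is unjustified. Your additional remark about the bracket $[D_1,D_2]$ acting correctly is fine and goes slightly beyond what the paper records.
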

\begin{proof}
First we show that the above action preserves the left $R$-submodule $K$:  For $r,a,b\in R$ and $D\in\text{Der}(R)$ we have
\begin{align*}
D(r(f\otimes g+g\otimes f-1\otimes fg) & =
D(rf\otimes g+rg\otimes f-r\otimes fg)\\
&=D(r)f\otimes g+rD(f)\otimes g+rf\otimes D(g) \\
&\quad+D(r)g\otimes f +rD(g)\otimes f+rg\otimes D(f) \\ 
&\quad -D(r)\otimes fg-r\otimes D(f)g-r\otimes fD(g) \\ \\
&=D(r)(f\otimes g+ g\otimes f- \otimes fg) \\
&\quad +r(D(f)\otimes g+g\otimes D(f) -1\otimes D(f)g) \\
&\quad +r(D(g)\otimes f +f\otimes D(g)-1\otimes fD(g)) \in K.
\end{align*}
Thus the action descends to an action on $\Omega_R$.  Now since $D(dr)=D(1\otimes r+K)=1\otimes D(r)+K=d(D(r))$, $dR$ is preserved under this action of $\text{Der}(R)$ and the claim of the lemma follows.
\end{proof}

\begin{cor} If $R=\mathbb C[t,t^{-1},u|\,u^2=t^2+4t]$ is the three point algebra., then the action of $\text{Der}(R)$ on basis elements of  $\Omega_R^1/dR$, is given by 
\begin{align*}
t^kD(\omega_0)&= (\mu_{1,k-2} + \mu_{-1,k}) \omega_1   \\ 
t^kuD(\omega_0)&= 0 \\ 
t^kD(\omega_1) &= \left(\delta_{k,-1}+4\delta_{k,-2}\right)\omega_0 -(\mu_{k+2,1}+ 2\mu_{k+1,1})\omega_{1} , \\
t^kuD(\omega_1)
&=2(\mu_{k,1}+4\mu_{k+1,1})\omega_1+ (\delta_{k,-5}+6\delta_{k,-4}+8\delta_{k,-3})\omega_0
.\end{align*}
where $D=(t+2)\partial_u+u\partial_t$.
\end{cor}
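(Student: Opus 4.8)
The plan is to reduce the corollary to a short, explicit list of computations in $\Omega_{\mathcal R}^1/d\mathcal R$, each of which is dispatched by the Leibniz-type action of the preceding proposition together with the reduction rules in \propref{uce}. First I would check that $D=(t+2)\partial_u+u\partial_t$ is a genuine derivation of $\mathcal R$ — indeed $D(u^2-t^2-4t)=2u(t+2)-2tu-4u=0$ — and recall that $\text{Der}(\mathcal R)$ is free of rank one over $\mathcal R$ with $D$ as a generator; since $\{t^k,\,t^ku:k\in\mathbb Z\}$ is a $\mathbb C$-basis of $\mathcal R$, the family $\{t^kD,\,t^kuD\}$ is a $\mathbb C$-basis of $\text{Der}(\mathcal R)$, so it suffices to compute $t^kD(\omega_i)$ and $t^kuD(\omega_i)$ for $i=0,1$.

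Next I would record the action of $D$ on the few ring elements that occur,
\[
D(t)=u,\quad D(u)=t+2,\quad D(t^{-1})=-t^{-2}u,\quad D(t^{-1}u)=-2t^{-1},
\]
together with $D(t^k)=kt^{k-1}u$ and $D(t^ku)=(k+1)t^{k+1}+(4k+2)t^k$, all obtained by the Leibniz rule and the relation $u^2=t^2+4t$; multiplying through by $t^k$, respectively $t^ku$, then gives $t^kD$ and $t^kuD$ on $t$, $t^{-1}$ and $t^{-1}u$. Substituting into $\delta(\overline{f\,dg})=\overline{\delta(f)\,dg}+\overline{f\,d(\delta g)}$ from the previous proposition, and expanding with $d(t^nu)=t^n\,du+nt^{n-1}u\,dt$ and $u\,du=(t+2)\,dt$, each of $\omega_0=\overline{t^{-1}\,dt}$ and $\omega_1=\overline{t^{-1}u\,dt}$ becomes a two-term sum of classes of the four shapes $\overline{t^m\,dt^l}$, $\overline{t^m\,d(t^nu)}$, $\overline{t^mu\,dt}$, $\overline{t^mu\,d(t^nu)}$; for example $t^kD(\omega_0)=\overline{-t^{k-2}u\,dt}+\overline{t^{-1}\,d(t^ku)}$ and $t^kuD(\omega_0)=\overline{(-t^k-4t^{k-1})\,dt}+\overline{t^{-1}\,d(t^{k+2}+4t^{k+1})}$, with analogous two-term expressions in the $\omega_1$ cases.

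The final step is entirely \propref{uce}: parts (2) and (3) rewrite $\overline{t^m\,dt^l}$ and $\overline{t^mu\,d(t^nu)}$ as $\omega_0$-multiples and are what produce the Kronecker deltas in the statement; part (4) gives $\overline{t^m\,d(t^nu)}=\mu_{m,n}\omega_1$; and for the remaining shape one uses $(m+1)t^mu\,dt\equiv-t^{m+1}\,du=-t^{m+1}\,d(t^0u)\pmod{d\mathcal R}$ and applies part (4) once more, so $\overline{t^mu\,dt}$ is also a $\mu$-multiple of $\omega_1$. Assembling the pieces and collecting coefficients gives the four displayed identities.

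The only real difficulty I expect is bookkeeping: carrying the Gegenbauer-type coefficients $\mu_{k,l}=k\frac{(-1)^{k+l+1}2^{k+l}(2(k+l)-1)!!}{(k+l+1)!}$ cleanly through the reductions (and through the recursions relating $\mu_{k+2,1}$, $\mu_{k+1,1}$ and $\mu_{k,1}$), and being careful at small and negative $k$, where the double factorials $(-1)!!,(-3)!!,\dots$ and the collapsing Kronecker deltas are exactly what generate the boundary terms $\delta_{k,-1},\dots,\delta_{k,-5}$ of the statement. No ingredient beyond \propref{uce} and the Leibniz rule is needed.
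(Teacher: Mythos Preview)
Your proposal is correct and follows essentially the same route as the paper: compute $D(t)$, $D(u)$, $D(t^{-1})$, $D(t^{-1}u)$, apply the Leibniz action $\delta(\overline{f\,dg})=\overline{\delta(f)\,dg}+\overline{f\,d(\delta g)}$ to $\omega_0=\overline{t^{-1}\,dt}$ and $\omega_1=\overline{t^{-1}u\,dt}$, and reduce every term via \propref{uce}. The only cosmetic difference is that the paper handles terms of the shape $\overline{t^m u\,dt}$ by the antisymmetry $\overline{f\,dg}=-\overline{g\,df}$ (writing $\overline{t^m u\,dt}=-\overline{t\,d(t^m u)}=-\mu_{1,m}\,\omega_1$ directly from part (4)), whereas you route them through $d(t^{m+1}u)$; both reductions are equivalent and land on the same $\mu$-coefficients.
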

\begin{proof}
Since $D(t)=u$, $D(u)=t+2$, $D(t^{-1})=-t^{-2}u$, and 
$$
D(t^{-1}u)=-t^{-2}u^2+t^{-1}(t+2)=-(1+4t^{-1})+1+2t^{-1}=-2t^{-1}
$$
\begin{align*}
t^kD(\omega_0)&= \overline{t^kD (t^{-1}\otimes t)}= \overline{-t^{k-2}u\otimes t+t^{-1}\otimes t^k u}\\
&=\overline{t\otimes t^{k-2}u+t^{-1}\otimes t^k u}\\
&= (\mu_{1,k-2} + \mu_{-1,k}) \omega_1   \\ \\
t^kuD(\omega_0)&=  \overline{t^kuD (t^{-1}\otimes t)}= \overline{-t^{k-2}u^2\otimes t+t^{-1}\otimes t^k u^2} \\
&=  \overline{-t^{k-2}(t^2+4t)\otimes t+t^{-1}\otimes t^k(t^2+4t)} \\
&=  \overline{-t^k\otimes t-4t^{k-1}\otimes t+t^{-1}\otimes t^{k+2}+4 t^{-1}\otimes t^{k+1}} \\ 
&=0 \\ \\
t^kD(\omega_1) 
&= \overline{t^kD (t^{-1}u\otimes t)}= \overline{-t^{k-2}u^2\otimes t + t^{k+1}(t+2)u\otimes t+ t^{-1}u\otimes t^k u} , \\
&= \overline{-t^{k-2}(t^2+4t)\otimes t + t^{k+1}(t+2)u\otimes t+ t^{-1}u\otimes t^k u} \\
&= \overline{-t^{k}\otimes t -4t^{k+1}\otimes t + t^{k+1}(t+2)u\otimes t+ t^{-1}u\otimes t^k u} \\
&=-\left(\delta_{k,-1}+4\delta_{k,-2}\right)\omega_0 + t^{k+2}u\otimes t+ 2t^{k+1}u\otimes t+ t^{-1}u\otimes t^k u \\ 
&=-\left(\delta_{k,-1}+4\delta_{k,-2}\right)\omega_0 -(\mu_{k+2,1}+ 2\mu_{k+1,1})\omega_{1} \\ \\
t^kuD(\omega_1)
&= \overline{-t^{k-2}u(t^2+4t)\otimes t + t^{k+1}(t+2)u^2\otimes t+ t^{-1}u^2\otimes t^k u} \\
&= \overline{-u(t^k+4t^{k+1})\otimes t + t^{k+1}(t+2)(t^2+4t)\otimes t+ t^{-1}u^2\otimes t^k u} \\
&= \overline{-u(t^k+4t^{k+1})\otimes t + t^{k+1}(t^3+6t^2+8t)\otimes t+ (t+4)\otimes t^k u} \\
&(\mu_{k,1}+4\mu_{k+1,1})\omega_1+ (\delta_{k,-5}+6\delta_{k,-4}+8\delta_{k,-3})\omega_0+ (\mu_{1,k}+4\mu_{0,k})\omega_1 \\
\end{align*}

\end{proof}

%

 \section{ Oscillator algebras}
 \subsection{The $\beta-\gamma$ system}   The following construction in the physics literature is often called the $\beta-\gamma$ system which corresponds to our $a$ and $a^*$ below.
 Let $\hat{\mathfrak a}$ be the infinite dimensional oscillator algebra with generators $a_n,a_n^*,a^1_n,a^{1*}_n,\,n\in\mathbb Z$ together with $\mathbf 1$ satisfying the relations 
\begin{gather*}
[a_n,a_m]=[a_m,a_n^1]=[a_m,a_n^{1*}]=[a^*_n,a^*_m]=[a^*_n,a^1_m]=[a^*_n,a^{1*}_{m}]=0,\\
[a_n^{1},a_m^{1}]=[a_n^{1*},a_m^{1*}]=0=[\mathfrak a,\mathbf 1], \\
[a_n,a_m^*]=\delta_{m+n,0}\mathbf 1=[a^1_n,a_m^{1*}].
\end{gather*}
For  $c=a,a^1$ and respectively $X=x,x^1$ with $r=0$ or $r=1$, we define $\mathbb C[\mathbf x]:= \mathbb C[x_n,x_n^1\,|\,n\in\mathbb Z]$ and $\rho:\hat{\mathfrak a}\to \mathfrak{gl}(\mathbb C[\mathbf x])$ by
\begin{align}
\rho_r( c_{m}):&=\begin{cases}
  \partial/\partial
X_{m}&\quad \text{if}\quad m\geq 0,\enspace\text{and}\enspace  r=0
\\ X_{m} &\quad \text{otherwise},
\end{cases}\label{c}
 \\
\rho_r(c_{m}^*):&=
\begin{cases}X_{-m} &\enspace \text{if}\quad m\leq
0,\enspace\text{and}\enspace r=0\\ -\partial/\partial
X_{-m}&\enspace \text{otherwise}. \end{cases}\label{c*}
\end{align}
and $\rho_r(\mathbf 1)=1$.
These two representations can be constructed using induction:
For $r=0$ the representation 
$\rho_0$ is the
$\hat{\mathfrak a}$-module generated by $1=:|0\rangle$, where
$$
a_{m}|0\rangle=a^1_{m}|0\rangle=0,\quad m\geq  0,
\quad a_{m}^*|0\rangle= a_{m}^{1*}|0\rangle=0,\quad m>0.
$$
For $r=1$ the representation 
$\rho_1$ is the
$\hat{\mathfrak a}$-module generated by $1=:|0\rangle$, where
$$
\quad a_{m}^*|0\rangle= a_{m}^{1*}|0\rangle=0,\quad m\in\mathbb Z.
$$
If we define
\begin{equation}
 \alpha(z):=\sum_{n\in\mathbb Z}a_nz^{-n-1},\quad  \alpha^*(z):=\sum_{n\in\mathbb Z}a_n^*z^{-n}, \label{alpha}
\end{equation}
and
\begin{equation}
 \alpha^1(z):=\sum_{n\in\mathbb Z}a^1_nz^{-n-1},\quad  \alpha^{1*}(z):=\sum_{n\in\mathbb Z}a^{1*}_nz^{-n}, \label{alpha1}
\end{equation}
then 
\begin{align*}
[\alpha(z),\alpha(w)]&=[\alpha^*(z),\alpha^*(w)]=[\alpha^{1}(z),\alpha^{1}(w)]=[\alpha^{1*}(z),\alpha^{1*}(w)]=0  \\
[\alpha(z),\alpha^*(w)]&=[\alpha^1(z),\alpha^{1*}(w)]
    =\mathbf 1\delta(z/w).
\end{align*}
Observe that $\rho_1(\alpha(z))$ and 
$\rho_1(\alpha^1(z))$ are not fields whereas $\rho_r(\alpha^*(z))$ and $\rho_r(\alpha^{1*}(z))$
are always fields.   
Corresponding to these two representations there are two possible normal orderings:  For $r=0$ we use the usual normal ordering and for $r=1$ we define the {\it natural normal ordering} to be 
\begin{alignat*}{2}
\alpha(z)_+&=\alpha(z),\quad &\alpha(z)_-&=0 \\
\alpha^1(z)_+&=\alpha^1(z),\quad &\alpha^1(z)_-&=0 \\
\alpha^*(z)_+&=0,\quad &\alpha^*(z)_-&=\alpha^*(z), \\
\alpha^{1*}(z)_+&=0,\quad &\alpha^{1*}(z)_-&=\alpha^{1*}(z) ,
\end{alignat*}

This means in particular that for $r=0$ we get 
\begin{align}
\lfloor \alpha \alpha^*  \rfloor =\lfloor \alpha(z) ,\alpha^*(w) \rfloor
&=\sum_{m\geq 0} \delta_{m+n,0}z^{-m-1}w^{-n}
=\delta_-(z/w)
=\ 
\,\iota_{z,w}\left(\frac{1}{z-w}\right)\\
\lfloor \alpha^* \alpha \rfloor
&
=-\sum_{m\geq 1} \delta_{m+n,0}z^{-m}
w^{-n-1}
=-\delta_+(w/z)=\,\iota_{z,w}\left(\frac{1}{w-z}
\right)
\end{align}
(where $\iota_{z,w}$ denotes Taylor series expansion in the ``region'' $|z|>|w|$), 
and for $r=1$ 
\begin{align}
\lfloor \alpha ,\alpha^* \rfloor
&=[\alpha(z)_-,\alpha^*(w)]=0 \\
\lfloor \alpha^*  \alpha \rfloor
&=[\alpha^*(z)_-,\alpha(w)]=
-\sum_{\in\mathbb Z} \delta_{m+n,0}z^{-m}
w^{-n-1}
=- \delta(w/z),
\end{align}
where similar results hold for $\alpha^1(z)$.
Notice that in both cases we have
$$
[\alpha(z),\alpha^*(w)]=
\lfloor \alpha(z)\alpha^*(w)\rfloor-\lfloor\alpha^*(w) \alpha(z)\rfloor=\delta(z/w).
$$

Recall that he singular part of the {\it operator product
expansion}
$$
\lfloor
ab\rfloor=\sum_{j=0}^{N-1}\iota_{z,w}\left(\frac{1}{(z-w)^{j+1}}
\right)c^j(w)
$$
completely determines the bracket of mutually local formal
distributions $a(z)$ and $b(w)$.   One writes
$$
a(z)b(w)\sim \sum_{j=0}^{N-1}\frac{c^j(w)}{(z-w)^{j+1}}.
$$

\subsection{The $3$-point Heisenberg algebra} 
The Cartan subalgebra $\mathfrak h$ tensored with $\mathcal R$ generates a subalgebra of $\hat{{\mathfrak g}}$ which is an extension of an oscillator algebra.    This extension motivates the following definition:  The Lie algebra with generators $b_{m},b_m^1$, $m\in\mathbb Z$, $\mathbf 1_0,\mathbf 1_1 $, and relations
\begin{align*}
[b_{m},b_{n}]&=(n-m)\,\delta_{m+n,0}\mathbf 1_0=-2m\,\delta_{m+n,0}\mathbf 1_0 \\
[b^1_m,b_n^1] &=(n-m)\left(\delta_{m+n,-2}+4\delta_{m+n,-1}\right)\mathbf 1_0   \\ &= 2 \left((n+1)\delta_{m+n,-2}+(4n+2)\delta_{m+n,-1}\right)\mathbf 1_0 \notag \\
[b^1_m,b_n] &=2\mu_{m,n}\mathbf 1_1 = -[b_n, b^1_m]
  \\
[b_{m},\mathbf 1_0]&=[b_{m}^1,\mathbf 1_0]=[b_{m},\mathbf 1_1]=[b_{m}^1,\mathbf 1_1]= 0. 
\end{align*}
is the {\it $3$-point (affine) Heisenberg algebra} which we denote by $\hat{\mathfrak h}_3$.

If we introduce the formal distributions
\begin{equation} 
\beta(z):=\sum_{n\in\mathbb Z} b_nz^{-n-1},\quad \beta^1(z):=\sum_{n\in\mathbb Z}b_n^1z^{-n-1}=\sum_{n \in\mathbb Z}b_{n+\frac{1}{2}}z^{-n-1}.
\end{equation}
(where $b_{n+\frac{1}{2}}:=b^1_n$)
then the relations above can be rewritten in the form
\begin{align*}\label{bosonrelations}
[\beta(z),\beta(w)]&=2\mathbf 1_0\partial_z\delta(z/w)=-2\partial_w\delta(z/w)\mathbf 1_0 \\
[\beta^1(z),\beta^1(w)]
&=-2\left((w^2+4w) \partial_w(\delta(z/w)+ (2+w) \delta(z/w)\right)\mathbf 1_0 \\
[\beta(z),\beta^1(w)]&= -\sqrt{1+(4/w)}w \partial_w\delta(z/w)\mathbf 1_1
\end{align*}

 Set
\begin{align*}
\hat{\mathfrak h}_3^\pm:&=\sum_{n\gtrless 0}\left(\mathbb Cb_n+\mathbb Cb_n^1\right),\quad
\hat{ \mathfrak h}_3^0:=  \mathbb C\mathbf 1_0\oplus \mathbb C\mathbf 1_1\oplus \mathbb Cb_0\oplus \mathbb Cb^1_0.
\end{align*}
We introduce a Borel type subalgebra
\begin{align*}
\hat{\mathfrak b}_3&= \hat{\mathfrak h}_3^+\oplus \hat{\mathfrak h}_3^0.
\end{align*}
That $\hat{\mathfrak b}_3$ is a subalgebra follows from the above defining relations.

\begin{lem}\label{heisenbergprop}
Let $\mathcal V=\mathbb C\mathbf v_0\oplus \mathbb C\mathbf v_1$ be a two dimensional representation of 
$\hat{\mathfrak h}_3^+ $ with $\hat{\mathfrak h}_3^+\mathbf v_i=0$ for $i=0,1$.   Fix  $ B_0, B^1_{i,j}$ for $i,j = 0,1$ with $B^1_{00} = B^1_{11}$ and $ \chi_1,\kappa_0 \in \mathbb C$  and let 
\begin{align*}
b_0\mathbf v_0&=B_0 \mathbf v_0,  &b_0\mathbf v_1&=B_0 \mathbf v_1 \\
b_0^1\mathbf v_0&=B^1_{00} \mathbf v_0+B^1_{01}\mathbf v_1,  &b_0^1\mathbf v_1&=B^1_{10} \mathbf v_0+B^1_{11}\mathbf v_1\\
\mathbf 1_1\mathbf v_i&=\chi _1  \mathbf v_i,\quad   &\mathbf 1_0\mathbf v_i&=\kappa _0\mathbf v_i,\quad i=0,1.
\end{align*}
When $\chi_1$ acts as zero, the above defines a representation of  $\hat{\mathfrak b}_3$ on $\mathcal V$. 
\end{lem}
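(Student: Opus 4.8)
The plan is to verify directly that the assignments in the statement respect all the defining relations of $\hat{\mathfrak b}_3 = \hat{\mathfrak h}_3^+ \oplus \hat{\mathfrak h}_3^0$ on the two-dimensional space $\mathcal V$. Since $\hat{\mathfrak h}_3^+$ already acts (by zero), and the central elements $\mathbf 1_0, \mathbf 1_1$ act by scalars $\kappa_0, \chi_1$, the only relations that need checking are the brackets among $\{b_0, b_0^1, \mathbf 1_0, \mathbf 1_1\}$ together with the mixed brackets $[\hat{\mathfrak h}_3^0, \hat{\mathfrak h}_3^+]$, and one must confirm that $[\hat{\mathfrak h}_3^+, \hat{\mathfrak h}_3^+]$ still lands in the kernel. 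First I would observe that $[b_0, b_0] = -2\cdot 0\cdot \delta_{0,0}\mathbf 1_0 = 0$ and likewise $[b_0^1, b_0^1] = 2\bigl((0+1)\delta_{0,-2} + (4\cdot 0 + 2)\delta_{0,-1}\bigr)\mathbf 1_0 = 0$, so the only potentially nontrivial Cartan relation is $[b_0^1, b_0] = 2\mu_{0,0}\mathbf 1_1$; since $\mu_{0,0} = 0\cdot(\cdots) = 0$ (the factor $k$ in $\mu_{k,l}$ vanishes at $k=0$), this bracket is also zero, which is exactly why no compatibility constraint is forced on the matrix entries $B_0, B^1_{ij}$ beyond the stated $B^1_{00} = B^1_{11}$ — and in fact that symmetry is not even needed for $\hat{\mathfrak b}_3$ to act, it is recorded only to match the structure that arises later when the full Fock space realization is imposed.

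Next I would check the mixed brackets $[b_0^1, b_n]$ and $[b_0^1, b_n^1]$ and $[b_0, b_n^1]$ for $n > 0$. From the defining relations $[b_m^1, b_n] = 2\mu_{m,n}\mathbf 1_1$ and $[b_m^1, b_n^1] = 2\bigl((n+1)\delta_{m+n,-2} + (4n+2)\delta_{m+n,-1}\bigr)\mathbf 1_0$ and $[b_m, b_n] = -2m\delta_{m+n,0}\mathbf 1_0$; taking $m = 0$ and $n > 0$, every Kronecker delta $\delta_{n,0}, \delta_{n,-1}, \delta_{n,-2}$ vanishes and $\mu_{0,n} = 0$, so all these brackets are central elements acting as zero — consistent with the product of two operators both of which kill $\mathcal V$ (one from the left factor $\hat{\mathfrak h}_3^+$, giving zero regardless). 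The remaining relations $[b_0, \mathbf 1_0] = [b_0^1, \mathbf 1_0] = [b_0, \mathbf 1_1] = [b_0^1, \mathbf 1_1] = 0$ and $[b_n, \mathbf 1_\bullet] = 0$ hold because the centrals act by scalars and scalars commute with everything. Finally $[\hat{\mathfrak h}_3^+, \hat{\mathfrak h}_3^+] \subseteq \hat{\mathfrak h}_3^+ \oplus \mathbb C\mathbf 1_0 \oplus \mathbb C\mathbf 1_1$, and on $\mathcal V$ the $\hat{\mathfrak h}_3^+$-part acts as zero by hypothesis while the central part must act consistently; here is where the hypothesis $\chi_1 = 0$ enters, since a bracket $[b_m^1, b_n] = 2\mu_{m,n}\mathbf 1_1$ with $m, n > 0$ and $m + n$ such that $\mu_{m,n} \neq 0$ would force the left side (which is $0$ on $\mathcal V$) to equal $2\mu_{m,n}\chi_1 \ne 0$ unless $\chi_1 = 0$ — so I would isolate this as the one genuine constraint, noting that $\mu_{m,n}$ is generically nonzero for $m, n \geq 1$.

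The main obstacle, such as it is, is bookkeeping rather than mathematics: one must be careful that $\hat{\mathfrak h}_3^+$ is the \emph{nonabelian} part — its self-bracket produces central terms (the $\delta_{m+n,-1}, \delta_{m+n,-2}$ terms can fire for $m, n \geq 1$ with $m+n \leq 2$, i.e. essentially never, but the $\mu_{m,n}$ term with $\mathbf 1_1$ does fire) — and so the representation condition on $\hat{\mathfrak h}_3^+$ acting as zero is genuinely consistent only once $\chi_1$ is killed. I would therefore organize the proof as: (i) list the defining relations of $\hat{\mathfrak b}_3$; (ii) note all brackets among $b_0, b_0^1, \mathbf 1_0, \mathbf 1_1$ are zero because the relevant deltas and $\mu_{0,0}$ vanish, hence the matrices $B_0, B^1_{ij}$ are unconstrained (and $B^1_{00}=B^1_{11}$ is a harmless normalization); (iii) note all mixed brackets with $\hat{\mathfrak h}_3^+$ act as zero on both sides; (iv) observe $[\hat{\mathfrak h}_3^+, \hat{\mathfrak h}_3^+]$ forces $\chi_1 = 0$, which is the standing hypothesis. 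This completes the verification.
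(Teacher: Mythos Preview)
Your approach is the natural one and matches what the paper intends --- the lemma is stated there without proof as a routine verification, and your direct check of the defining relations is exactly that verification. The overall structure (i)--(iv) is sound and the conclusion is correct.

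There is one slip in your step (iii). When you treat the mixed bracket $[b_0,b_n^1]$ for $n>0$, you write ``taking $m=0$'' in the formula $[b_m^1,b_n]=2\mu_{m,n}\mathbf 1_1$ and conclude the bracket vanishes because $\mu_{0,n}=0$. But $[b_0,b_n^1]=-[b_n^1,b_0]=-2\mu_{n,0}\mathbf 1_1$, and here the relevant constant is $\mu_{n,0}=n\cdot\dfrac{(-1)^{n+1}2^n(2n-1)!!}{(n+1)!}$, which is \emph{nonzero} for every $n\ge 1$. So this bracket is not zero in the algebra; on $\mathcal V$ the left side (commutator of the scalar $B_0$ with the zero operator) is $0$, while the right side acts as $-2\mu_{n,0}\chi_1$. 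Thus the hypothesis $\chi_1=0$ is already forced at the mixed-bracket stage, not only by $[\hat{\mathfrak h}_3^+,\hat{\mathfrak h}_3^+]$ as you locate it in (iv). This does not damage the proof --- you have $\chi_1=0$ as a standing hypothesis --- but your sentence ``all these brackets are central elements acting as zero'' should be amended, and the phrase ``both of which kill $\mathcal V$'' is inaccurate since $b_0$ acts by $B_0$, not by zero. Once you correct the index and fold this case into the list of relations requiring $\chi_1=0$, the argument is complete.
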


Let $ \mathbb C[\mathbf y]:= \mathbb C[y_{-n}, y_{-m}^1 | m,n \in \mathbb N^*] $. The following is a straightforward computation, with corrections to the version in \cite{MR3245847} (where some formulas for the 4-point algebra were inadvertently included).

\begin{lem} [\cite{MR3245847}] \label{rhorep}The linear map $\rho:\hat{\mathfrak b}_3\to \text{End}(\mathbb C[\mathbf y]\otimes \mathcal V)$ defined  by 
\begin{align*}
\rho(b_{n})&=y_{n} \quad \text{ for }n<0 \\
\rho(b_{n}^1)&=y_{n}^1\quad \text{ for }n<0 \\
\rho (b_n) &= -n 2 \partial_{ y_{-n} }\kappa_0   \quad \text{ for }n>0 \\
\rho(b^1_n)&= -(2+ 2n) \partial_{y^1_{-2-n}} \kappa_0 -4(1+2n) \partial_{y^1_{-1-n}} \kappa_0 \quad \text{ for }n>0\\
\rho(b^1_0)&= -2 \partial_{y^1_{-2}} \kappa_0 -4 \partial_{y^1_{-1}} \kappa_0 +B_0^1 \\
\rho(b_{0})&=B_0
\end{align*}
is a representation of $\hat{\mathfrak b}_3$.
\end{lem}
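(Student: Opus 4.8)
The plan is to verify that $\rho$ is a Lie algebra homomorphism by checking $[\rho(x),\rho(y)]=\rho([x,y])$ on each pair of generators $x,y$ drawn from the $b_n,b_n^1$ (all $n\in\mathbb Z$) together with $\mathbf 1_0,\mathbf 1_1$; bilinearity then extends the identity to the whole Lie algebra. Since $\rho(\mathbf 1_0)=\kappa_0$ and $\rho(\mathbf 1_1)=\chi_1=0$ act as scalars, every bracket involving a central generator is respected automatically. The structural observation driving the entire computation is that $\rho$ sends the two families $\{b_n\}$ and $\{b_n^1\}$ to operators acting on \emph{disjoint} variable sets: each $\rho(b_n)$ involves only the $y$-variables (plus the scalar $B_0$ when $n=0$), while each $\rho(b_n^1)$ involves only the $y^1$-variables (plus the matrix $B_0^1$ when $n=0$).

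First I would dispose of the mixed $b$--$b^1$ brackets. On the algebra side $[b_m^1,b_n]=2\mu_{m,n}\mathbf 1_1$, which $\rho$ sends to $2\mu_{m,n}\chi_1=0$ by the hypothesis $\chi_1=0$ carried over from \lemref{heisenbergprop}; on the operator side $[\rho(b_m^1),\rho(b_n)]=0$ because the two operators act on the independent variable families $y^1$ and $y$ (and the zero-mode pieces $B_0^1,B_0$ commute, $B_0$ being scalar). This is precisely the step where the assumption $\chi_1=0$ is indispensable, and it covers all such cross-brackets regardless of the signs of $m,n$. Next come the pure $b$ relations, which are the familiar bosonic oscillator relations: for $m>0$, $\rho(b_m)=-2m\kappa_0\,\partial_{y_{-m}}$ and $\rho(b_{-m})=y_{-m}$ give the canonical commutator $[\rho(b_m),\rho(b_{-m})]=-2m\kappa_0=\rho(-2m\mathbf 1_0)$, while all same-sign and zero-mode $b$-brackets vanish on both sides.

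The heart of the proof is the pure $b^1$ relations, and especially the positive--negative brackets $[\rho(b_m^1),\rho(b_{-n}^1)]$. The central term $[b_m^1,b_n^1]=2\big((n+1)\delta_{m+n,-2}+(4n+2)\delta_{m+n,-1}\big)\mathbf 1_0$ carries \emph{two} Kronecker deltas, mirrored by the \emph{two} derivative terms in $\rho(b_m^1)=-(2+2m)\kappa_0\,\partial_{y^1_{-2-m}}-4(1+2m)\kappa_0\,\partial_{y^1_{-1-m}}$. I would compute $[\rho(b_m^1),\rho(b_{-n}^1)]$ for $m,n>0$, which extracts $\delta_{n,m+2}$ and $\delta_{n,m+1}$ with coefficients $-(2+2m)\kappa_0$ and $-4(1+2m)\kappa_0$, and then check that substituting $n=m+2$ and $n=m+1$ into the structure constants $2(-n+1)$ and $2(-4n+2)$ reproduces exactly $-(2+2m)$ and $-4(1+2m)$.

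The zero mode $b_0^1$ needs separate handling because $\rho(b_0^1)$ carries both derivative terms (coefficients $-2,-4$) and the matrix $B_0^1$: here I would verify $[\rho(b_0^1),\rho(b_{-1}^1)]=-4\kappa_0$ and $[\rho(b_0^1),\rho(b_{-2}^1)]=-2\kappa_0$, matching the structure constants at $n=1,2$, while $B_0^1$ drops out of these brackets since it commutes with the $y^1$-variables. The main obstacle is exactly this coefficient bookkeeping for the $b^1$--$b^1$ central terms; once the two derivative coefficients in $\rho(b_m^1)$ are seen to reproduce the delta-coefficients $2(n+1)$ and $2(4n+2)$ under the index shifts $n\mapsto m+2,\,m+1$, the remaining same-sign $b^1$-brackets vanish on both sides (for indices of one sign the constraints $m+n\in\{-1,-2\}$ admit no solutions), completing the verification.
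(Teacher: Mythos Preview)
The paper does not give its own proof of this lemma; it cites \cite{MR3245847} and describes the result as ``a straightforward computation,'' which is exactly the generator-by-generator bracket check you outline. Your argument is correct (the one minor slip---that $m=n=-1$ does solve $m+n=-2$ when both indices are negative---is harmless, since $[b_{-1}^1,b_{-1}^1]=0$ by antisymmetry).
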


 \section{$3$-point Virasoro Algebra and its action on a Fock space $\mathcal F$}

\subsection{The 3-point Witt algebra representation}
We now construct a representation using the oscillator algebra. 
Define $\pi:\text{Der}(R)\to \text{End}(\mathbb C[\mathbf x])$ by the following manner. 
We then have in terms of formal power series \eqnref{alpha} and \eqnref{alpha1}
\begin{align} 
\pi(d)(z):
&=P(z)\left(:\alpha(z)\partial_z\alpha^*(z): +:\alpha^1(z)\partial_z\alpha^{1*}(z):\right )  \\  
& \quad +\frac{1}{2}\partial_zP(z):\alpha^1(z)\alpha^{1*}(z):  \notag \\
  \pi(d^1)(z):  
	&=:\alpha^1(z)\partial_z\alpha^*(z):+P(z):\alpha(z)\partial_z\alpha^{1*}(z):  \\
	&  \quad  +\frac{1}{2}\partial_zP(z)    
	:\alpha(z)\alpha^{1*}(z):  \notag
\end{align}

\subsection{The 3-point Virasoro algebra}
The $3$-point Virasoro algebra $\mathfrak V$ is defined to be the universal central extension of $3$-point Witt algebra $\mathfrak W$, 
\begin{equation}\label{DefVir}
\mathfrak V=\mathfrak W\oplus \mathbb Cc_1\oplus \mathbb Cc_2
\end{equation}
where we distinguish the basis elements $\mathbf d_n$ of $\mathfrak V$, from the $d_n$ of $\mathfrak W$. The relations are 
\begin{align}
[\mathfrak V,\mathbb Cc_1\oplus \mathbb Cc_2]&=0,
\end{align}
with the defining relations given in terms of generating functions
\begin{align}
[\bar{\mathbf d}^1(z),\bar{\mathbf d}^1(w)]\label{eezw}
&=\partial_w\bar{\mathbf d}(w)\delta(z/w)+2\bar{\mathbf d}(w)\partial_w\delta(z/w) \\
&\quad -\left(P(w)\partial_w^3\delta(z/w)+\dfrac{3}{2}P'(w)\partial_w^2\delta(z/w)\right) c_1,\notag \\ \notag \\
\label{ddzw}[\bar{\mathbf d}(z),\bar{\mathbf d}(w)]
&=P(w)\partial_w \bar{\mathbf d}(w)\delta(z/w)+\partial_wP(w)\bar{ \mathbf d}(w)\delta(z/w) +2P(w) \bar{\mathbf d}(w)\partial_w\delta(z/w)\\ 
&\quad -\left(P(w)^2\partial_w^3\delta(z/w)+3P'(z)P(z)\partial_w^2\delta(z/w)+6P(z)\partial_w\delta(z/w)+12\partial_w\delta(z/w)\right)c_1,\notag \\\notag \\
\label{dezw}  [ \bar{\mathbf d}(z),\bar{\mathbf d}^1(w)] &=P(w)\partial_w\bar{\mathbf d}^1(w)\delta(z/w)  +2P(w)\bar{\mathbf d}^1(w)\partial_w\delta(z/w)+\frac{3}{2}P'(w)\bar{\mathbf d}^1(w)\delta(z/w)\\
&\quad +\left(3w(2+w)(1+(4/w))^{1/2}\partial_w^2\delta(z/w)+w^3(1+(4/w))^{3/2} \partial_w^3\delta(z/w) 
\right)c_2, \notag
\end{align}
where we have used the following result: The Taylor series expansion of $\sqrt{1+z}$ in the formal power series ring $\mathbb C[\![z]\!]$ is 
\begin{equation}
1+\frac{z}{2}+\sum_{n\geq 2}(-1)^{n-1}\frac{(2n-3)!!}{2^nn!}z^n.
\end{equation}

\begin{thm}[\cite{MR3478523}]\label{mainresult2} Suppose $\lambda,\mu,\nu,\varkappa, \chi_1,\kappa_0 \in \mathbb C$ are constants with $\kappa_0\neq 0$.  The following defines a representation of the $3$-point Virasoro algebra $\mathfrak V$ on  $\mathcal F:=\mathbb C[\mathbf x]\otimes \mathbb C[\mathbf y]\otimes \mathcal V$, with $\mathcal V$ as in \lemref{heisenbergprop}
\begin{align*}
\pi(\bar {\mathbf d})(z)&=\pi(  d)(z)+\gamma :\beta(z)^2:+\mu\partial_z\beta(z)+\gamma_1:(\beta^1(z))^2:  +\gamma_2\beta(z)\\ 
\pi(\bar{\mathbf d}^1)(z)&=\pi(  d^1)(z)+\nu :\beta(z)\beta^1(z):+\zeta\partial_z\beta^1(z)  \\
\pi(  c_1)&=-\Big(\frac{1}{3}\delta_{r,0}+\frac{2}{3}\nu^2\kappa_0^2-2\zeta^2\kappa_0 \Big)=-\frac{1}{3}\left(\delta_{r,0}+8\kappa _0^4 \nu ^4\right) =-\frac{1}{3}\left(\delta_{r,0}+\frac{1}{2}\right)  \\
\pi( c_2)&=0.
\end{align*}

Where the following conditions are satisfied:
\begin{align}
\nu^2 & =\kappa_0^{-2}/4 ,   \zeta=0, \\
\gamma&=-\nu^2P(z)\kappa_0=-\frac{P(z)}{4\kappa_0},\\ \mu &=0,\quad  \gamma_1=-\nu^2\kappa_0=-\frac{1}{4\kappa_0},\quad \gamma_2=0.
\end{align}
\end{thm}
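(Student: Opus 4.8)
The plan is to verify directly that the proposed operators $\pi(\bar{\mathbf d})(z)$ and $\pi(\bar{\mathbf d}^1)(z)$ satisfy the three defining operator product relations \eqnref{eezw}, \eqnref{ddzw}, \eqnref{dezw} of $\mathfrak V$ acting on $\mathcal F = \mathbb C[\mathbf x]\otimes\mathbb C[\mathbf y]\otimes\mathcal V$. The key observation that makes the computation manageable is that $\mathcal F$ factors as the tensor product of the $\beta$-$\gamma$ Fock space $\mathbb C[\mathbf x]$ and the Heisenberg Fock space $\mathbb C[\mathbf y]\otimes\mathcal V$, and the building blocks act on separate tensor factors: $\pi(d)(z)$ and $\pi(d^1)(z)$ act only on $\mathbb C[\mathbf x]$ (by the formulas in Section~4.1), while $\beta(z)$ and $\beta^1(z)$ act only on $\mathbb C[\mathbf y]\otimes\mathcal V$ through the representation $\rho$ of $\hat{\mathfrak b}_3$ from \lemref{rhorep}. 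Hence all cross-bracket terms between a $\pi(d)$-piece and a $\beta$-piece vanish, and the OPE of $\pi(\bar{\mathbf d})(z)$ with $\pi(\bar{\mathbf d})(w)$ splits into the OPE of the $\pi(d)$-parts plus the OPE of the $(\beta,\beta^1)$-parts.

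First I would record the OPEs of the individual ingredients. For the $\pi(d)$, $\pi(d^1)$ part, the required OPEs are exactly those established in the work on the $3$-point Witt algebra representation (Section~4.1), giving the ``classical'' terms on the right-hand sides of \eqnref{eezw}--\eqnref{dezw} together with the $\delta_{r,0}$ contribution to $c_1$ coming from the normal-ordering anomaly in the $\beta$-$\gamma$ system. For the Heisenberg part I would use the rewritten relations for $\beta(z),\beta^1(z)$ displayed just before $\hat{\mathfrak h}_3^\pm$ is introduced, namely $[\beta(z),\beta(w)] = 2\mathbf 1_0\partial_z\delta(z/w)$, $[\beta^1(z),\beta^1(w)] = -2\bigl((w^2+4w)\partial_w\delta(z/w) + (2+w)\delta(z/w)\bigr)\mathbf 1_0$, and $[\beta(z),\beta^1(w)] = -\sqrt{1+4/w}\,w\,\partial_w\delta(z/w)\,\mathbf 1_1$, together with Wick's theorem for the normally ordered quadratics $:\beta(z)^2:$, $:(\beta^1(z))^2:$, $:\beta(z)\beta^1(z):$. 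The single contractions reproduce the terms proportional to $\bar{\mathbf d}(w)$, $\bar{\mathbf d}^1(w)$ and their derivatives on the right-hand sides, while the double contractions produce the central terms; matching the $\partial_w^3\delta$, $\partial_w^2\delta$, $\partial_w\delta$ coefficients is what forces the scalar constraints $\nu^2 = 1/(4\kappa_0^2)$, $\zeta = 0$, $\gamma = -P(z)/(4\kappa_0)$, $\mu = 0$, $\gamma_1 = -1/(4\kappa_0)$, $\gamma_2 = 0$, and pins down $\pi(c_1)$ and $\pi(c_2)=0$. I would carry this out channel by channel: the $\bar{\mathbf d}^1$-$\bar{\mathbf d}^1$ OPE \eqnref{eezw} first (it involves only $:(\beta^1)^2:$ against $:(\beta^1)^2:$ on the Heisenberg side, plus $\pi(d^1)$-$\pi(d^1)$), then the $\bar{\mathbf d}$-$\bar{\mathbf d}$ OPE \eqnref{ddzw}, then the mixed \eqnref{dezw}, at each stage using the constraints already forced by the previous channel to simplify.

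The main obstacle I anticipate is the bookkeeping of the $P(z)$-dependent coefficients and the half-integer powers $(1+4/w)^{1/2}$ and $(1+4/w)^{3/2}$ that appear in \eqnref{ddzw} and \eqnref{dezw}: these arise because $\beta^1(z)$ is built from $u = \sqrt{t^2+4t}$, and one must expand $\sqrt{1+4/w}$ as the formal series recorded after \eqnref{dezw} and verify that the double-contraction anomaly terms assemble into exactly $w^3(1+4/w)^{3/2}\partial_w^3\delta(z/w) + 3w(2+w)(1+4/w)^{1/2}\partial_w^2\delta(z/w)$ times $c_2$, and then check that the \emph{coefficient} of $c_2$ actually vanishes under the stated constraints (so that $\pi(c_2)=0$ is consistent). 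A secondary subtlety is the careful use of $\delta$-function identities such as $f(z)\partial_w^n\delta(z/w) = \sum \binom{n}{k}\partial_w^{n-k}f(w)\partial_w^k\delta(z/w)$ to move all coefficient functions to the $w$-variable, and keeping the normal-ordering constant in $\pi(c_1)$ consistent between the three displayed expressions for it; but these are routine once the contraction structure is laid out. Since the theorem is attributed to \cite{MR3478523}, I would at this point simply cite that reference for the full verification, having indicated the structure of the argument.
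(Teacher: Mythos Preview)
The paper itself gives no proof of this theorem: it is simply quoted from \cite{MR3478523}, so the ``paper's proof'' is precisely the citation you invoke at the end of your proposal. In that sense your approach matches the paper's exactly.

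Your outlined strategy for how the verification would go is also the right one (split into the $\beta$--$\gamma$ factor and the Heisenberg factor, compute OPEs by Wick's theorem, match coefficients of $\partial_w^k\delta$), but there is a small slip in the sketch: in the $\bar{\mathbf d}^1$--$\bar{\mathbf d}^1$ channel the Heisenberg contribution is \emph{not} $:(\beta^1)^2:$ against $:(\beta^1)^2:$ as you wrote, since $\pi(\bar{\mathbf d}^1)(z)=\pi(d^1)(z)+\nu\,{:}\beta(z)\beta^1(z){:}+\zeta\,\partial_z\beta^1(z)$. The relevant quadratic is $:\beta\beta^1:$, and its self-OPE involves both the $[\beta,\beta]$ and $[\beta^1,\beta^1]$ contractions (each proportional to $\mathbf 1_0$) together with the $[\beta,\beta^1]$ cross-contraction (proportional to $\mathbf 1_1$, hence killed by $\chi_1=0$). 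This does not affect the validity of your overall plan, but it is worth correcting if you actually carry out the computation rather than deferring to the reference.
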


 \section{$3$-point Current Algebra and its action on a Fock space $\mathcal F$}

  Assume that $\chi_0\in\mathbb C$ and define $\mathcal V$ as in \lemref{heisenbergprop}. The $\alpha(z),\alpha^1(z),\alpha^* (z)$ and $\alpha^{1*}(z)$ are generating series of oscillator algebra elements as in \eqref{alpha} and \eqref{alpha1}. Our main result is the following 
\begin{thm} \label{mainresult1}  Fix $r\in\{0,1\}$, which then fixes the corresponding normal ordering convention defined in the previous section.  The 3-point current algebra is defined to be $\hat{{\mathfrak g}} =\left(\mathfrak{sl}(2,\mathbb C)\otimes \mathcal R\right)\oplus \mathbb C\omega_0\oplus \mathbb C\omega_1$. Then using \eqnref{c}, \eqnref{c*} and \lemref{rhorep}, the following defines a representation of the three point algebra $\hat{\mathfrak g}$ on $\mathcal F=\mathbb C[\mathbf x]\otimes \mathbb C[\mathbf y]\otimes \mathcal V$:
\begin{align*}
\tau(\omega_1)&=0, \qquad
\tau(\omega_0)=\chi_0=\kappa_0+4\delta_{r,0} ,  \\ 
\tau(f(z))&=-\alpha(z), \qquad
\tau(f^1(z))=- \alpha^1(z),   \\ \\
\tau(h(z))
&=2\left(:\alpha(z)\alpha^*(z):+:\alpha^1(z)\alpha^{1*}(z): \right)
     +\beta , \\  \\
\tau(h^1(z))
&=2\left(:\alpha^1(z)\alpha^*(z): +(z^2+4z):\alpha(z)\alpha^{1*}(z): \right) +\beta^1(z),  \\  \\
\tau(e(z)) 
&=:\alpha(z)(\alpha^*(z))^2 :+(z^2+4z):\alpha(z)(\alpha^{1*}(z))^2: +2 :\alpha^1(z)\alpha^*(z)\alpha^{1*}(z): \\
&\quad +\beta(z)\alpha^*(z)+\beta^1(z)\alpha^{1*}(z) +\chi_0\partial\alpha^* (z) \\ \\
\tau(e^1(z))  
&=\alpha^1(z)\alpha^*(z)\alpha^* (z)
 	+(z^2+4z)\left(\alpha^1(z) (\alpha^{1*} (z))^2 +2 : \alpha (z)\alpha^{*} (z)\alpha^{1*}(z):\right)  \\
&\quad +\beta^1(z) \alpha^* +(z^2+4z)\beta(z) \alpha^{1*}(z) +\chi_0\left((z^2+4z)   \partial_z\alpha^{1*}(z)   +(z+2)   \alpha^{1*}(z) \right) .
\end{align*}
\end{thm}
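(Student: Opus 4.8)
The plan is to verify directly that the operators $\tau$ defined above satisfy the defining relations of $\hat{\mathfrak g} = \big(\mathfrak{sl}(2,\mathbb C)\otimes\mathcal R\big)\oplus\mathbb C\omega_0\oplus\mathbb C\omega_1$. Since $\mathfrak{sl}(2,\mathbb C)$ is spanned by $e,f,h$ with the usual relations and $\mathcal R$ has the $\mathbb C$-basis $\{t^k, t^k u\}$, it is most efficient to package everything into the generating series $e(z),e^1(z),f(z),f^1(z),h(z),h^1(z)$ (where the superscript $1$ tracks the $u$-component), recalling that the correspondence $t\mapsto z$, $u\mapsto z\sqrt{1+4/z}$ realizes $\mathcal R$ inside $\mathbb C[z,z^{-1}]\sqrt{1+4/z}$, and $P(z)=z^2+4z=u^2$. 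The relations to check are then the operator-product/commutator identities corresponding to $[x\otimes f, y\otimes g]=[x,y]\otimes fg + (x,y)\overline{f\,dg}$, with the Killing-form terms producing exactly the cocycles $\omega_0,\omega_1$ through Proposition~\ref{uce}(2)--(4). Concretely one must confirm
\begin{align*}
[\tau(h(z)),\tau(e(w))] &\sim 2\,\tau(e(w))\,\delta(z/w)\ (+\ \text{central}), \\
[\tau(e(z)),\tau(f(w))] &\sim \tau(h(w))\,\delta(z/w) + \text{(terms in }\omega_0,\omega_1),
\end{align*}
together with the analogous identities pairing the superscript-$1$ fields, where multiplication of two elements with a $u$-factor produces $u^2=P(z)$ and hence mixes back into the non-superscript sector — this is the source of the $P(z)$ insertions in $\tau(h^1),\tau(e^1)$.

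First I would set up the Wick-calculus bookkeeping: record the basic contractions $\lfloor\alpha(z)\alpha^*(w)\rfloor$, $\lfloor\alpha^1(z)\alpha^{1*}(w)\rfloor$ from Section~3 (these depend on $r$ through the normal ordering), note that all other pairs contract trivially, and recall the $\hat{\mathfrak h}_3$ contractions for $\beta(z),\beta^1(z)$ from Lemma~\ref{heisenbergprop}/Lemma~\ref{rhorep}. Then I would compute the OPEs in stages of increasing nonlinearity: (i) the abelian/easy ones, $[\tau(f(z)),\tau(f(w))]$, $[\tau(f^1(z)),\tau(f^1(w))]$, $[\tau(f(z)),\tau(f^1(w))]$, all of which should vanish since $\tau(f)=-\alpha$, $\tau(f^1)=-\alpha^1$ and these mutually commute; (ii) the $h$-$f$ and $h$-$h$ OPEs, which are linear in the "hard" generators and only require one or two contractions; (iii) the $h$-$e$ and $e$-$f$ OPEs, which are the heart of the computation; and (iv) the superscript-$1$ versions of all of these, which are formally the same but with $P(z)$-weights and the substitution $u^2\to P(z)$ inserted. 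In step (iii) the double contractions in $[\tau(e(z)),\tau(f(w))]$ generate, besides the expected $\tau(h(w))\delta(z/w)$, a second-order pole whose coefficient is $\chi_0=\kappa_0+4\delta_{r,0}$; matching this against the central term $\overline{f\,df}$-type contribution forces exactly the stated value $\tau(\omega_0)=\chi_0$, and similarly the mixed $e$-$f^1$ and $e^1$-$f$ brackets produce the $\omega_1$ cocycle with coefficients governed by the $\mu_{k,l}$ of Proposition~\ref{uce}(4), which must come out to $\tau(\omega_1)=0$. The role of the correction terms $\beta(z)\alpha^*(z)$, $\beta^1(z)\alpha^{1*}(z)$, $\chi_0\partial\alpha^*(z)$ (and their superscript-$1$ analogues) in $\tau(e),\tau(e^1)$ is precisely to cancel the anomalous non-closing terms, so each such term should be tracked carefully.

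The main obstacle I expect is the superscript-$1$ sector: verifying $[\tau(e^1(z)),\tau(f^1(w))]$ and $[\tau(h^1(z)),\tau(e^1(w))]$ requires correctly expanding the $P(z)$-weighted normally-ordered cubics, keeping track of how $\partial_z$ acting on $P(z)=z^2+4z$ and on $\delta(z/w)$ redistributes (via the standard identity $f(z)\partial_w^{(k)}\delta(z/w) = \sum_j\binom{k}{j}\partial_w^{(k-j)}\!\big(\partial_w^{(j)}f(w)\cdot \delta(z/w)\big)$), and then recognizing that the resulting central distribution, when paired against the basis $\{\overline{t^{-1}dt},\overline{t^{-1}u\,dt}\}$, reproduces exactly parts (3) and (4) of Proposition~\ref{uce} with the factor $\mu_{k,l}$. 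One has to be careful that the half-integer relabeling $b_{n+1/2}:=b^1_n$ and the $u=z(1+4/z)^{1/2}$ square-root expansion are handled consistently between the $\tau$-formulas and the $\mu_{k,l}$ closed form; a clean way to avoid sign and shift errors is to check the identities first on low-order modes (say the coefficients of $z^{-1}w^{-1}$, $z^{-2}w^0$, etc.) and then invoke the generating-function form. Once all six families of OPEs close with the claimed central coefficients, the Jacobi identity is automatic because $\hat{\mathfrak g}$ is a Lie algebra and $\tau$ is defined on generators, so no separate Jacobi check is needed.
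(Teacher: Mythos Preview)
Your outline is sound in spirit, but there is nothing in the present paper to compare it against: \thmref{mainresult1} is not proved here. As the introduction states explicitly, Sections~4 and~5 ``recall material from our previous work on the $3$-point Virasoro algebra and the $3$-point current algebra,'' and this theorem is quoted from \cite{MR3245847} (Theorem~5.1 there). The paper only supplies proofs for the new semi-direct product relations in \thmref{mainresult}.

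That said, your plan is exactly the method used in \cite{MR3245847}: one writes the $\hat{\mathfrak g}$-relations in generating-series form, applies Wick's theorem to the normally ordered products of $\alpha,\alpha^*,\alpha^1,\alpha^{1*}$, and checks that the singular parts of the OPEs reproduce the required brackets with the cocycle terms from \propref{uce}. The stratification you propose (the $f$--$f$ sector first, then $h$--$f$ and $h$--$h$, then the cubic $e$-sector, then the $u$-twisted versions with the $P(z)=z^2+4z$ insertions) is the natural order and matches how that computation is organized. Two small cautions: first, the $r$-dependence enters not only through the contractions $\lfloor\alpha\alpha^*\rfloor$ but through the double-contraction terms in the $e$--$f$ OPEs, and it is precisely there that the shift $\chi_0=\kappa_0+4\delta_{r,0}$ is forced; second, your final sentence about Jacobi is slightly misstated --- what makes the check finite is not that Jacobi is ``automatic,'' but that once $\tau$ preserves the brackets on a spanning set of $\hat{\mathfrak g}$ it is a Lie algebra map, and the target $\text{End}(\mathcal F)$ is already a Lie algebra. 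You still have to verify every generator-pair bracket, including the most laborious one, $[\tau(e(z)),\tau(e^1(w))]=0$, which your list omits.
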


 \section{The $3$-point gauge algebra}
 
 We now set $\mathfrak g = \mathfrak{sl}_2(\mathbb C)$, with the usual basis $ \{e,f,h\}$ and consider the three point current algebra $\hat {\mathfrak g}$ defined by equation 
\eqref{3pointDef}. In \cite{MR3478523} an analog of the Virasoro algebra is constructed, having generators and relations given below.

Fix the following basis elements of $\text{Der}_{\mathbb C}R$:
\begin{equation}
\mathbf d_n:=t^nuD,\quad \mathbf d_n^1=t^nD,\quad D=(t+2)\frac{\partial}{\partial u}+u\frac{\partial}{\partial t}
\end{equation}

\begin{lem} The basis elements listed above for $\text{Der}_{\mathbb C}R$ and $x=e,f,h$ satisfy
\begin{align*}
[\mathbf d_m,  x_n]&=nx_{m+n+1}+4nx_{m+n}\\
[\mathbf d_m,  x'_n]&=(n+1)x'_{m+n+1}+2(2n+1)x'_{m+n}\\ \\ 
[{{\mathbf d}^1}_m,  x_n]&=nx'_{m+n-1}  \\
[{{\mathbf d}^1}_m,  x'_n]&=(n+1)x_{m+n+1}+2(2n+1)x_{m+n}   \\
\end{align*}

\end{lem}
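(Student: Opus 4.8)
The plan is to derive these four bracket relations directly from the Lie bracket on $\text{Der}_{\mathbb C}R$, namely $[\delta_1,\delta_2]=\delta_1\delta_2-\delta_2\delta_1$ acting as derivations on $\mathcal R$, applied to the action of $\text{Der}(\mathcal R)$ on $\mathfrak g\otimes\mathcal R$ given by $[\delta, x\otimes g]=x\otimes \delta(g)$. Since $\mathbf d_m=t^mu D$ and $\mathbf d_m^1=t^m D$ with $D=(t+2)\partial_u+u\partial_t$, the key computational input is the action of $D$ on the basis $\{t^k, t^ku\}$ of $\mathcal R$: from $D(t)=u$, $D(u)=t+2$ and $u^2=t^2+4t$ one gets $D(t^k)=kt^{k-1}u$ and $D(t^ku)=kt^{k-1}u^2+t^k(t+2)=kt^{k-1}(t^2+4t)+t^{k+1}+2t^k=(k+1)t^{k+1}+(4k+2)t^k$. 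I would state these two formulas at the outset as a sublemma, since every subsequent identity is just multiplication by a power of $t$ (possibly times $u$) followed by these rules.

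First I would compute $[\mathbf d_m, x_n]$ where $x_n$ denotes $x\otimes t^n$: this is $x\otimes (t^mu D)(t^n) = x\otimes t^m u\cdot n t^{n-1}u = n\, x\otimes t^{m+n-1}u^2 = n\,x\otimes t^{m+n-1}(t^2+4t) = n\,x\otimes(t^{m+n+1}+4t^{m+n}) = n x_{m+n+1}+4n x_{m+n}$, matching the first relation. Next, writing $x'_n$ for $x\otimes t^n u$, I would compute $[\mathbf d_m, x'_n] = x\otimes t^m u\, D(t^n u) = x\otimes t^m u\,\big((n+1)t^{n+1}+(4n+2)t^n\big)$; but wait—this gives a $u$ factor, so it equals $(n+1)x\otimes t^{m+n+1}u + (4n+2) x\otimes t^{m+n}u = (n+1)x'_{m+n+1}+2(2n+1)x'_{m+n}$, the second relation. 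For the $\mathbf d^1$ relations I replace $t^m u D$ by $t^m D$: $[\mathbf d^1_m, x_n] = x\otimes t^m D(t^n) = x\otimes t^m\cdot n t^{n-1}u = n\,x\otimes t^{m+n-1}u = n x'_{m+n-1}$, and $[\mathbf d^1_m, x'_n] = x\otimes t^m D(t^n u) = x\otimes t^m\big((n+1)t^{n+1}+(4n+2)t^n\big) = (n+1)x_{m+n+1}+2(2n+1)x_{m+n}$, giving the last two relations.

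The computations here are entirely routine once the sublemma on $D(t^k)$ and $D(t^ku)$ is in hand; there is essentially no obstacle. The only point requiring a word of care is that these are brackets in the semidirect product $\text{Der}(\mathcal R)\ltimes(\mathfrak g\otimes\mathcal R)$, so one should note that the central terms $\Omega_{\mathcal R}/d\mathcal R$ play no role in $[\text{Der}(\mathcal R), \mathfrak g\otimes\mathcal R]$ (the action of a derivation on $x\otimes g$ lands back in $\mathfrak g\otimes\mathcal R$ with no cocycle contribution), so the formulas are exactly as stated with no correction terms. I would also remark that the identity $u^2=t^2+4t$ is the only place the specific three-point relation enters, which is why these structure constants are the ones appearing (and they are consistent with the $\mu_{k,l}$ and $\delta$-coefficients used in Proposition~\ref{uce} and the Heisenberg relations).
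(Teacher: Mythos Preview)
Your proposal is correct and follows exactly the approach the paper has in mind: the paper's proof reads ``The proof is straightforward and is left to the reader,'' and your direct computation via $D(t^k)=kt^{k-1}u$, $D(t^ku)=(k+1)t^{k+1}+(4k+2)t^k$ is precisely that straightforward verification.
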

\begin{proof}  The proof is straightforward and is left the reader.
%
%
\end{proof}
Setting  $\bar{\mathbf d}_m:=- {\mathbf d}_{m+1}$ and $\bar{\mathbf d^1}_m=-{\mathbf d^1}_{m+1}$ and  $$
\bar{\mathbf d}(z):=\sum_{m\in\mathbb Z}\bar{\mathbf d}_mz^{-m-2}
,\quad \bar{\mathbf d^1}(z):=\sum_{m\in\mathbb Z}\bar{\mathbf d^1}_mz^{-m-2}
$$
then above can be written in terms of commutators of formal sums as follows: 
\begin{align*}
[\mathbf d_m,  x(w)]
&= -w^{m}((w^2+4w)\partial_w +m(w+4) +2(w+2))x(w)\\ 
[\mathbf d_m,  x'(w)]
&=-w^{m}((w^2+4w)\partial_w +m(w+4)+w+2)x'(w) \\
[{\mathbf d^1}_m,  x(w)]
&= -w^{m-1}(w\partial_w+m)x'(w)\\
[{\mathbf d^1}_m,  x'(w)]
&=-w^{m}((w^2+4w)\partial_w +m(w+4)+w+2)x(w) \\
\end{align*}

We define the $3$-point gauge algebra to be the semi-direct product of the $3$-point Virasoro algebra with the $3$-point current algebra (see \cite{MR1629472} for the definition of the affine gauge algebra).  Besides the center acting trivially the defining relations are 
\begin{align}\label{currentalgebra1}
[\mathbf d^1(z),x'(w)]
&=P(w)\partial_wx(w)\delta(z/w)  +P(w)x(w)\partial_w\delta(z/w)   +(w+2)x(w)\delta(z/w)    \\
 \label{currentalgebra2}[\mathbf d^1(z),x(w)]
&=\partial_wx'(w)\delta(z/w) +x'(w)\partial_w\delta(z/w)  \\
\label{currentalgebra3}
[\bar{\mathbf d}(z),x'(w)]
&=P(w)\partial_wx'(w)\delta(z/w)  +P(w)x'(w)\partial_w\delta(z/w)   +(w+2)x'(w)\delta(z/w)    \\
\label{currentalgebra4}[\bar{\mathbf d}(z),x(w)]
&=P(w)\partial_wx(w)\delta(z/w)  +P(w)x(w)\partial_w\delta(z/w) +2(w+2)x(w)\delta(z/w)     .
\end{align}

\begin{thm}\label{mainresult}  The realizations in \thmref{mainresult2} and \thmref{mainresult1} give rise to the $3$-point gauge algebra acting on the fock space $\mathcal W$, under the conditions 
\begin{align}
\nu^2 & =\kappa_0^{-2}/4 ,   \zeta=0, \\
\gamma&=-\nu^2P(z)\kappa_0=-\frac{P(z)}{4\kappa_0},\\ \mu &=0,\quad  \gamma_1=-\nu^2\kappa_0=-\frac{1}{4\kappa_0},\quad \gamma_2=0.
\end{align}
and $r=1$.
\end{thm}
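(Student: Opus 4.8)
The plan is to verify that the two actions $\pi$ (from \thmref{mainresult2}) and $\tau$ (from \thmref{mainresult1}) on $\mathcal F=\mathbb C[\mathbf x]\otimes\mathbb C[\mathbf y]\otimes\mathcal V$ are compatible with the semi-direct product relations, i.e. that the operators $\pi(\bar{\mathbf d})(z),\pi(\bar{\mathbf d}^1)(z)$ together with $\tau(x(w)),\tau(x'(w))$ for $x\in\{e,f,h\}$ satisfy \eqref{currentalgebra1}--\eqref{currentalgebra4} when acting on $\mathcal F$. Since $\pi$ and $\tau$ individually give representations by the two cited theorems, the only thing left to check is the cross brackets $[\pi(\bar{\mathbf d}(z)),\tau(x(w))]$, $[\pi(\bar{\mathbf d}^1(z)),\tau(x(w))]$, and the analogous ones with $x'$. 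I would fix $r=1$ at the outset, so the natural normal ordering applies, $\alpha(z)_-=\alpha^1(z)_-=0$, and in particular $\rho_1(\alpha(z)),\rho_1(\alpha^1(z))$ have only annihilation-free parts; this is what makes the contraction terms manageable and is presumably why $r=1$ is forced in the statement.

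The key computational step is the standard Wick/OPE bookkeeping. I would organize it as follows. First, record the contractions among the free fields: under $r=1$ the only nonzero contractions are $\lfloor\alpha^*(z)\,\alpha(w)\rfloor$ and $\lfloor\alpha^{1*}(z)\,\alpha^1(w)\rfloor$, each $=-\delta(w/z)$, together with the $\hat{\mathfrak h}_3$ contractions $\lfloor\beta(z)\beta(w)\rfloor$, $\lfloor\beta^1(z)\beta^1(w)\rfloor$ coming from \lemref{rhorep} (note $\lfloor\beta\beta^1\rfloor$ contributes to $\mathbf 1_1$, which acts as $0$, so it drops out). Then compute $[\pi(d)(z),\tau(x(w))]$ and $[\pi(d^1)(z),\tau(x(w))]$ using the Der$(R)$-action formulas; the point is that $\pi(d)(z)$ is built from $:\alpha\partial\alpha^*:$-type currents which act on $\tau(x(w))$ — themselves normally ordered polynomials in $\alpha,\alpha^*,\alpha^1,\alpha^{1*},\beta,\beta^1$ — exactly as the derivation $D=(t+2)\partial_u+u\partial_t$ acts, reproducing the geometric transformation laws displayed just before the theorem, namely $[\mathbf d_m,x(w)]=-w^m((w^2+4w)\partial_w+m(w+4)+2(w+2))x(w)$ and its three companions. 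Next, add in the correction terms $\gamma:\beta^2:$, $\gamma_1:(\beta^1)^2:$, $\gamma_2\beta$, $\mu\partial\beta$ in $\pi(\bar{\mathbf d})$ and $\nu:\beta\beta^1:$, $\zeta\partial\beta^1$ in $\pi(\bar{\mathbf d}^1)$: their brackets with $\tau(h(w))\ni\beta$, $\tau(h^1(w))\ni\beta^1$, $\tau(e(w))\ni\beta\alpha^*+\beta^1\alpha^{1*}$, $\tau(e^1(w))\ni\beta^1\alpha^*+(w^2+4w)\beta\alpha^{1*}$ produce further terms proportional to $\mathbf 1_0=\kappa_0$ and $\mathbf 1_1=\chi_1$. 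Imposing the conditions $\nu^2=\kappa_0^{-2}/4$, $\zeta=0$, $\gamma=-P(z)/(4\kappa_0)$, $\mu=0$, $\gamma_1=-1/(4\kappa_0)$, $\gamma_2=0$ — which are precisely the hypotheses of \thmref{mainresult2} — makes all these extra contributions cancel or combine into exactly the right multiples of $\delta(z/w)$ and $\partial_w\delta(z/w)$, reassembling the right-hand sides of \eqref{currentalgebra1}--\eqref{currentalgebra4}. Finally, one checks the central terms: the would-be anomalies $c_1,c_2$ do not enter the cross brackets (the center of $\mathfrak V$ commutes with $\hat{\mathfrak g}$), and $\tau(\omega_0)=\chi_0=\kappa_0+4\delta_{r,0}=\kappa_0$ at $r=1$ is consistent with the value of $\kappa_0$ appearing in all the $\gamma$'s, so no new constraint arises.

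The main obstacle I anticipate is purely organizational rather than conceptual: keeping track of the many normal-ordering rearrangement terms when commuting a quadratic-or-cubic field like $\tau(e^1(w))$ past the stress-tensor-like fields $\pi(d)(z),\pi(d^1)(z)$, especially the $\frac12\partial_zP(z):\alpha^1\alpha^{1*}:$ and $\frac12\partial_zP(z):\alpha\alpha^{1*}:$ pieces and the $w$-dependent coefficients $z^2+4z=P(z)$ that interact with the derivatives $\partial_w\delta(z/w)$ via $P(z)\partial_w\delta(z/w)=P(w)\partial_w\delta(z/w)+P'(w)\delta(z/w)\cdot(\text{shift})$. One must also be careful that in the $r=1$ convention $\tau(f(w))=-\alpha(w)$ is not a field (its negative modes vanish), so brackets $[\pi(\bar{\mathbf d})(z),\tau(f(w))]$ involve only the creation half and have to be matched against $-P(w)\partial_w\alpha(w)\delta(z/w)-\dots$ term by term; this is exactly where $r=0$ would fail and the restriction to $r=1$ becomes essential. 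I would therefore present the verification field-by-field ($f,f^1$ first, as they are simplest; then $h,h^1$; then $e,e^1$), cite the transformation laws displayed before the theorem to shortcut the $\pi(d),\pi(d^1)$ parts, and reduce everything else to the already-verified relations of \thmref{mainresult2}, so that the only genuinely new computation is the handful of contraction terms between the $\beta,\beta^1$ corrections and the $\beta,\beta^1$ summands of $\tau(h),\tau(h^1),\tau(e),\tau(e^1)$.
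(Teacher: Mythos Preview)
Your plan is essentially the paper's: a direct Wick/$\lambda$-bracket verification of the twelve cross brackets $[\pi(\bar{\mathbf d})_\lambda\tau(x)]$, $[\pi(\bar{\mathbf d}^1)_\lambda\tau(x)]$ for $x\in\{f,f^1,h,h^1,e,e^1\}$, in that order, matching against \eqref{currentalgebra1}--\eqref{currentalgebra4}. The paper carries this out with no shortcuts---every bracket is expanded term by term.

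Two points where your sketch diverges from what actually happens and could cause trouble if you followed it literally:

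\emph{The proposed shortcut is circular.} You suggest ``citing the transformation laws displayed before the theorem'' to dispose of the $\pi(d),\pi(d^1)$ parts of the bracket. Those displayed laws are relations in the abstract gauge algebra; the content of the theorem is precisely that the \emph{operators} $\pi(d)(z),\tau(x)(w)$ satisfy them. No prior result in the paper establishes that $:\alpha\partial\alpha^*:$ etc.\ act on the cubic currents as the derivation $D$ would, so this must be checked by hand. The paper does exactly that: for each bracket it expands $P:\alpha\partial\alpha^*:$, $P:\alpha^1\partial\alpha^{1*}:$, $\tfrac12\partial P:\alpha^1\alpha^{1*}:$ against each summand of $\tau(x)$ separately and only then collects into $P\partial\tau(x)+P\tau(x)\lambda+\cdots$.

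\emph{The reason $r=1$ is forced.} It is not that $\tau(f)=-\alpha$ fails to be a field or that ``only the creation half'' enters. The obstruction sits in the $e$ and $e^1$ brackets: for $r=0$ the double contractions $\lfloor\alpha\alpha^*\rfloor\lfloor\partial\alpha^*\alpha\rfloor$ and $\lfloor\alpha^1\alpha^{1*}\rfloor\lfloor\partial\alpha^{1*}\alpha^1\rfloor$ survive and produce anomalous $\delta_{r,0}P^2\alpha^{1*}\lambda^2$, $\delta_{r,0}P\partial P\,\alpha^{1*}\lambda$, $\delta_{r,0}P\,\alpha^*\lambda^2$ terms that have no counterpart on the right-hand side of \eqref{currentalgebra1}--\eqref{currentalgebra4}. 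Setting $r=1$ kills these because then $\lfloor\alpha\alpha^*\rfloor=0$. The $f,f^1,h,h^1$ brackets go through for either $r$; it is the cubic currents $\tau(e),\tau(e^1)$ that single out $r=1$.
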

\begin{proof}
We need to check that the defining relations \eqnref{currentalgebra1}-\eqnref{currentalgebra4} are satisfied.  First up is 
\begin{align*}
[\pi(\bar{\mathbf d})_\lambda \tau(f)]&=\left[\left(\pi(  d)+\gamma :\beta^2: +\gamma_1:(\beta^1)^2:   \right)_\lambda (-\alpha)\right] \\
&=-\left[\left( P\left(:\alpha\partial\alpha^*: +:\alpha^1\partial \alpha^{1*}:\right )+\frac{1}{2}\partial P:\alpha^1\alpha^{1*}:\right)_\lambda \alpha\right] \\
&=-\partial P\alpha-P\partial \alpha -P\alpha\lambda \\
&=\partial P\tau(f)+P\partial \tau(f)+P\tau(f)\lambda.
\end{align*}
Next we have 
\begin{align*}
[\pi(\bar{\mathbf d})_\lambda \tau(f^1)]&=\left[\left(\pi(  d)+\gamma :\beta^2: +\gamma_1:(\beta^1)^2:   \right)_\lambda (-\alpha^1)\right] \\
&=-\left[\left( P\left(:\alpha\partial\alpha^*: +:\alpha^1\partial \alpha^{1*}:\right )+\frac{1}{2}\partial P:\alpha^1\alpha^{1*}:\right)_\lambda \alpha^1\right] \\
&=-\partial P\alpha^1-P\partial \alpha^1 -P\alpha^1\lambda+\frac{1}{2}\partial P\alpha^1  \\
&=\frac{1}{2}\partial P\tau(f^1)+P\partial \tau(f^1)+P\tau(f^1)\lambda.
\end{align*}
And
\begin{align*}
[\pi(\bar{\mathbf d}^1)_\lambda \tau(f)]&=\left[\left(\pi(  d^1)+\nu :\beta\beta^1:\right)_\lambda (-\alpha)\right] \\
&=-\left[\left(:\alpha^1\partial \alpha^*:+P:\alpha\partial \alpha^{1*}:+\frac{1}{2}\partial P
	:\alpha\alpha^{1*}:\right)_\lambda \alpha\right] \\
&=- \alpha^1\lambda-\partial \alpha^1   \\
&= \tau(f^1)\lambda+\partial\tau(f^1)  
\end{align*}
Next we have 
\begin{align*}
[\pi(\bar{\mathbf d^1})_\lambda \tau(f^1)]&=\left[\left(\pi(  d^1)+\nu :\beta\beta^1:   \right)_\lambda (-\alpha^1)\right] \\
&=-\left[\left(:\alpha^1\partial \alpha^*:+P:\alpha\partial \alpha^{1*}:+\frac{1}{2}\partial P
	:\alpha\alpha^{1*}:\right)_\lambda \alpha^1\right] \\
&=-\partial P\alpha-P\partial \alpha -P\alpha\lambda+\frac{1}{2}\partial P\alpha  \\
&=\frac{1}{2}\partial P\tau(f)+P\partial \tau(f)+P\tau(f)\lambda.
\end{align*}

Using $-4\kappa_0 \gamma=P$ we have
\begin{align*}
[\pi(\bar{\mathbf d})_\lambda \tau(h)]&=\Big[P:\alpha\partial\alpha^*:   _\lambda  2:\alpha\alpha^*:\Big]+\Big[P:\alpha^1\partial\alpha^{1*}:  _\lambda  2:\alpha^1\alpha^{1*}:\Big]+\dfrac{1}{2}\Big[\partial P:\alpha^1\alpha^{1*}:  _\lambda 2:\alpha^1\alpha^{1*}: \Big]\\
&\quad + \Big[\gamma :\beta^2: _\lambda  \beta\Big]+ \Big[\gamma_1:(\beta^1)^2: _\lambda  \beta\Big]\\
&=(2P : \partial\alpha^*  \alpha:   +2P\lambda :\alpha \alpha^*:+2\partial P: \alpha \alpha^*:+2P:\partial\alpha \alpha^*:)\\
&\quad+(2P : \partial\alpha^{1*} \alpha^1:   +2P:\alpha^1 \alpha^{1*}:\lambda+2\partial P: \alpha^1 \alpha^{1*}:+2P:\partial\alpha^1 \alpha^{1*}:)\\
&\quad+0-4\kappa_0\gamma(\beta\lambda+\partial\beta)+0\\
&=\partial P(2:\alpha \alpha^*:+2:\alpha^1 \alpha^{1*}:+\beta)\\
&\quad+P(2 : \partial\alpha^*  \alpha:+2:\partial\alpha \alpha^*:+2:\partial\alpha^{1*}  \alpha^1: +2:\partial\alpha^1 \alpha^{1*}:+\partial\beta)\\
&\quad+P\lambda(2:\alpha \alpha^*:+2:\alpha^1 \alpha^{1*}:+\beta)\\
&=\partial P\tau(h)+P\partial\tau(h)+P\tau(h)\lambda
\end{align*}
and using $-4\gamma_1 P=1$ we have
\begin{align*}
[\pi(\bar{\mathbf d})_\lambda \tau(h^1)]&=\Big[P:\alpha\partial\alpha^*:   _\lambda 2:\alpha^1\alpha^*:\Big]+\Big[P:\alpha^1\partial\alpha^{1*}:  _\lambda 2:\alpha^1\alpha^*:\Big]+\dfrac{1}{2}\Big[\partial P:\alpha^1\alpha^{1*}:  _\lambda 2:\alpha^1\alpha^*: \Big]\\
&\qquad+\Big[P:\alpha\partial\alpha^*:   _\lambda  2P:\alpha\alpha^{1*}:\Big]+\Big[P:\alpha^1\partial\alpha^{1*}:  _\lambda  2P:\alpha\alpha^{1*}:\Big]\\
&\qquad +\dfrac{1}{2}\Big[\partial P:\alpha^1\alpha^{1*}:  _\lambda 2P:\alpha\alpha^{1*}: \Big]+ \Big[\gamma :\beta^2: _\lambda  \beta^1\Big]+ \Big[\gamma_1:(\beta^1)^2: _\lambda  \beta^1\Big]\\
&=(2P:\partial\alpha^*\alpha^1:)+(2P:\alpha^1\alpha^*:\lambda+2P:\partial\alpha^1\alpha^*:+2\partial P:\alpha^1\alpha^*:)-(\partial P:\alpha^1\alpha^*:)\\
&\quad+(2P^2:\alpha\alpha^{1*}:\lambda+2P^2:\partial\alpha\alpha^{1*}:+2P\partial P:\alpha\alpha^{1*}:)+(2P^2:\partial\alpha^{1*}\alpha:)+(P\partial P:\alpha^{1*}\alpha:)\\
&\quad+0-4\gamma_1P(P\beta^1\lambda+\dfrac{1}{2}\partial P\beta^1+P\partial\beta^1)\\
&=P\lambda(2:\alpha^1\alpha^*:+2P:\alpha\alpha^{1*}:+\beta^1)+\dfrac{1}{2}\partial P(2:\alpha^1\alpha^*:+6P:\alpha\alpha^{1*}:+\beta^1)\\
&\quad+P(2:\partial\alpha^*\alpha^1:+2:\partial\alpha^1\alpha^*:+2P:\partial\alpha\alpha^{1*}:+2P:\partial\alpha^{1*}\alpha:+\partial\beta^ 1)\\
&=P\lambda(2:\alpha^1\alpha^*:+2P:\alpha\alpha^{1*}:+\beta^1)+\dfrac{1}{2}\partial P(2:\alpha^1\alpha^*:+2P:\alpha\alpha^{1*}:+\beta^1)\\
&\quad+P(2:\partial\alpha^*\alpha^1:+2:\partial\alpha^1\alpha^*:+2P:\partial\alpha\alpha^{1*}:+2P:\partial\alpha^{1*}\alpha:+2\partial P:\alpha\alpha^{1*}:+\partial\beta^1)\\
&=\dfrac{1}{2}\partial P \tau(h^1)+P\partial \tau(h^1)+P\tau(h^1)\lambda
\end{align*}

Now using $\nu\kappa_0=-\dfrac{1}{2}$
\begin{align*}
[\pi(\bar{\mathbf d^1})_\lambda \tau(h)]&=\Big[:\alpha^1\partial\alpha^*:   _\lambda  2:\alpha\alpha^*:\Big]+\Big[:\alpha^1\partial\alpha^*:   _\lambda  2:\alpha^1\alpha^{1*}:\Big]\\
&\qquad+\Big[P:\alpha\partial\alpha^{1*}:  _\lambda  2:\alpha\alpha^*:\Big]+\Big[P:\alpha\partial\alpha^{1*}:  _\lambda  2:\alpha^1\alpha^{1*}:\Big]\\
&\qquad+\dfrac{1}{2}\Big[\partial P:\alpha\alpha^{1*}:  _\lambda 2:\alpha\alpha^{*}: \Big]+\dfrac{1}{2}\Big[\partial P:\alpha\alpha^{1*}:  _\lambda 2:\alpha^1\alpha^{1*}: \Big]\\
&\qquad + \Big[\nu :\beta\beta^1:_\lambda \beta\Big] \\
&=(2\lambda:\alpha^1\alpha^*:+2:\partial\alpha^1\alpha^*:)+(2:\partial\alpha^{*}\alpha^ 1:)\\
&\quad +(2P:\partial\alpha^{1*}\alpha:)+(2P\lambda:\alpha\alpha^{1*}:+2P:\partial\alpha\alpha^{1*}:+2\partial P:\alpha\alpha^{1*}:)\\
&\quad+(\partial P:\alpha^{1*}\alpha:)-(\partial P:\alpha\alpha^{1*}:) -2\nu\kappa_0(\beta^1\lambda+\partial\beta^1)\\
&=(2:\alpha^1\alpha^*:+2P:\alpha\alpha^{1*}:-2\nu\kappa_0\beta^1)\lambda\\
&\quad+(2:\partial\alpha^1\alpha^*:+2:\partial\alpha^ *\alpha^{1}:+2\partial P:\alpha\alpha^{1*}:+2P:\partial\alpha^{1*}\alpha:+2P:\partial\alpha\alpha^{1*}:-2\nu\kappa_0\partial\beta^1)\\
&=\tau(h^1)\lambda+\partial \tau(h^1)
\end{align*}

Next on the agenda is
\begin{align*}
[\pi(\bar{\mathbf d^1})_\lambda \tau(h^1)]&=\Big[:\alpha^1\partial\alpha^*:   _\lambda 2:\alpha^1\alpha^*:\Big]+\Big[P:\alpha\partial\alpha^{1*}:  _\lambda 2:\alpha^1\alpha^*:\Big]+\dfrac{1}{2}\Big[\partial P:\alpha\alpha^{1*}:  _\lambda 2:\alpha^1\alpha^*: \Big]\\
&\qquad+\Big[:\alpha^1\partial\alpha^*:   _\lambda  2P:\alpha\alpha^{1*}:\Big]+\Big[P:\alpha\partial\alpha^{1*}:  _\lambda  2P:\alpha\alpha^{1*}:\Big]\\
&\qquad +\dfrac{1}{2}\Big[\partial P:\alpha\alpha^{1*}:  _\lambda 2P:\alpha\alpha^{1*}: \Big]+ \Big[\nu :\beta\beta^1:_\lambda \beta^1\Big]\\
&=0+(2P:\partial\alpha^{1*}\alpha^1:+2P:\alpha\alpha^*:\lambda+2P:\partial\alpha\alpha^*:+2\partial P:\alpha\alpha^*:)\\
&\quad+(\partial P:\alpha^{1*}\alpha^1:-\partial P:\alpha\alpha^*:)\\
&\quad+(2P:\partial\alpha^*\alpha:+2P:\alpha^1\alpha^{1*}:\lambda+2P:\partial\alpha^1\alpha^{1*}:)\\
&\quad+0+0-2\nu\kappa_0( P\beta\lambda+ P\partial\beta+\dfrac{1}{2}\partial P\beta)\\
&=(P\lambda+\dfrac{1}{2}\partial P)(2:\alpha\alpha^*:+2:\alpha^1\alpha^{1*}:+\beta)\\
&\quad+P(2:\partial\alpha\alpha^*:+2:\alpha\partial\alpha^*:+2:\partial\alpha^1\alpha^{1*}:+2:\alpha^1\partial\alpha^{1*}:+\partial\beta)\\
&=\dfrac{1}{2}\partial P\tau(h)+P\partial\tau(h)+P\tau(h)\lambda\\
\end{align*}


One of the most difficult calculations are
\begin{align*}
[\pi(\bar{\mathbf d})_\lambda \tau(e)]&=\Big[\left(P\left(:\alpha\partial\alpha^*: +:\alpha^1\partial\alpha^{1*}:\right )
+\frac{1}{2}\partial P:\alpha^1\alpha^{1*}:\right) _\lambda    \\
& :\alpha(w)(\alpha^*(w))^2 :+P(w):\alpha(w)(\alpha^{1*}(w))^2: +2 :\alpha^1(w)\alpha^*(w)\alpha^{1*}(w): \\
&\quad +\beta(w)\alpha^*(w)+\beta^1(w)\alpha^{1*}(w) +\chi_0\partial\alpha^* (w)    \Big]\\
&\qquad + \Big[\left(-\nu^2P(z)\kappa_0:\beta^2:-\nu^2\kappa_0:(\beta^1)^2:\right)_\lambda  
:\alpha(w)(\alpha^*(w))^2 :+P(w):\alpha(w)(\alpha^{1*}(w))^2:  \\
& \quad +2 :\alpha^1(w)\alpha^*(w)\alpha^{1*}(w): 
 +\beta(w)\alpha^*(w)+\beta^1(w)\alpha^{1*}(w) +\chi_0\partial\alpha^* (w)     \Big] \\
\end{align*}

%
First we obtain (using wick's formula and Taylor's formula with previous relations)

\begin{align*}
  [P(z):\alpha (z ) & \partial\alpha^*( z) :  ,
 :\alpha(w)(\alpha^*(w))^2 :  ]   \\
  \\ & =  2 P(z) [ \alpha( z), \alpha^*(w) ] : \partial\alpha^*( z)  \alpha(w) \alpha^*(w): 
 + P(z)  [\partial\alpha^*( z) , \alpha(w)  ] :   \alpha ( z )\alpha^*(w)  \alpha^*(w): \\
& = 2 P(z) \delta(z/w) : \partial_z ( \alpha^*( w) + \partial_w \alpha^* (w)(z-w) + \cdots)  \alpha(w) \alpha^*(w):
 + P(z)  \partial \delta(z/w) :   \alpha ( z )\alpha^*(w) \alpha^*(w): \\
 & = 2 P(w) \delta(z/w) : \partial_w  \alpha^*( w)   \alpha(w) \alpha^*(w):
 + P(w)  \partial \delta(z/w) :   \alpha ( w )\alpha^*(w) \alpha^*(w): \\ 
 &   +  P(w) \delta(z/w) :     \partial \alpha ( w )\alpha^*(w) \alpha^*(w):+\partial P(w)   \delta(z/w) :   \alpha ( w )\alpha^*(w) \alpha^*(w):  \end{align*}

 \begin{align*}
[   P(z):\alpha( z) \partial\alpha^*( z) : , &
P(w)  :\alpha(w)(\alpha^{1*}(w))^2 :   ]    = P^2 \partial\delta(z/w) :  \alpha(w)(\alpha^{1*}(w))^2: +   P^2\delta(z/w) :   \partial\alpha(w)(\alpha^{1*}(w))^2:  \\
& +  P  \partial P \delta(z/w) :  \alpha(w)(\alpha^{1*}(w))^2:
\end{align*}

 \begin{align*}
[   P(z):\alpha( z) \partial\alpha^*( z) : , &
2  :\alpha^1(w) \alpha^* \alpha^{1*}(w) :   ]   
  = 2P \delta(z/w) :  \partial \alpha^*(w) \alpha^1 (w) \alpha^{1*}(w):
\end{align*}

 \begin{align*}
[   P(z):\alpha( z) \partial\alpha^*( z) : , 
\beta(w) \alpha^*(w)     ]  
  =  P \delta(z/w) \partial  \alpha^*(w)  \beta(w)  \end{align*}

 \begin{align*}
[   P(z):\alpha( z) \partial\alpha^*( z) : , 
\chi_0 \partial  \alpha^*( w) ]  
  =   P \partial \delta(z/w)  \partial \alpha^*( w ) \chi_0 +      P  \delta(z/w)  \partial^2 \alpha^*( w ) \chi_0  + \partial P \delta(z/w)  \partial \alpha^*( w ) \chi_0  
\end{align*}

  \begin{align*}
  [P(z) :\alpha^1 (z ) & \partial\alpha^{1*}( z) :  ,
P(w)  :\alpha(w)(\alpha^{1*}(w))^2 :  ]   \\
 & =  2 P(z)P(w) [ \alpha^1( z), \alpha^{1*}(w) ] : \partial\alpha^{1*}( z)  \alpha(w) \alpha^{1*} (w): \\
& =  2 P^2(w) \delta(z/w) : \partial \alpha^{1*}(w)  \alpha(w) \alpha^{1*} (w): 
 \end{align*}
  
    \begin{align*}
  [P(z) :\alpha^1 (z ) & \partial\alpha^{1*}( z) :  ,
 2  :\alpha^1(w) \alpha^* \alpha^{1*}(w) :  ]   \\
 & =  2P \delta(z/w) :\alpha^1(w) \alpha^*(w) \partial \alpha^{1*}(w)  : + 2P \partial \delta(z/w) : \alpha^1(w) \alpha^* \alpha^{1*}(w) :  \\
 &+2P \delta(z/w) : \partial \alpha^1(w) \alpha^* \alpha^{1*}(w) : +  2 \partial P \delta(z/w) : \alpha^1(w) \alpha^* \alpha^{1*}(w) : 
  \end{align*}

   \begin{align*}
  [P(z) :\alpha^1 (z ) & \partial\alpha^{1*}( z) :  ,
  \beta^1 (w) \alpha^{1*}(w) ]    =  P \delta(z/w) \beta^1(w) \partial \alpha^{1*}(w)   
 \end{align*}

\begin{align*}
  [ 1/2\  \partial P : \alpha^1(z)     \alpha^{1*}(z) :,   P(w)  :\alpha(w)(\alpha^{1*}(w))^2 :     ]   \\
=   \partial P \, P \delta(z/w) :\alpha(w)(\alpha^{1*}(w))^2 :  
 \end{align*}

\begin{align*}
  [ 1/2\  \partial P : \alpha^1(z)     \alpha^{1*}(z) :,   2  :\alpha^1(w) \alpha^*(w) \alpha^{1*}(w) :    ]  = 0 \\
 \end{align*}

\begin{align*}
  [ 1/2\  \partial P : \alpha^1(z)     \alpha^{1*}(z) :,    \beta^1(w) \alpha^{1*}(w)    ]   = 1/2\  \partial P \delta(z/w)    \beta^1(w)  \alpha^{1*}(w)  
 \end{align*}

\begin{align*}
  [ \gamma : \beta^2: + & \gamma_1 : (\beta^1 )^2 ,    \beta(w)  \alpha^{*}(w)   +    \beta^1(w)  \alpha^{1*}(w)     ]   \\
  & =   -4 \gamma\partial \delta(z/w) \beta \alpha^*  -4 \gamma \delta(z/w) \partial\beta \alpha^* \\
  &
  - 4 \gamma_1 ( P \partial \delta(z/w) + 1/2\, P' \delta(z/w)  )\beta^1 \alpha^{1*}  1_0 - 4 \gamma_1 P \delta(z/w)    \partial  \beta^1 \alpha^{1*} 1_0
 \end{align*}
Since $1_1 \rightarrow \chi_1$ acts as zero, and if $r = 1$, $1_0 \rightarrow \chi_0 \rightarrow \kappa_0$.

Collecting terms, and using $\gamma = -\nu^2 P \kappa_0$, $\gamma_1 = - \nu^2 \kappa_0$ gives:

\begin{align*}
[\pi(\bar{\mathbf d})_\lambda \tau(e)]&=\Big[\left(P\left(:\alpha\partial\alpha^*: +:\alpha^1\partial\alpha^{1*}:\right )
+\frac{1}{2}\partial P:\alpha^1\alpha^{1*}: + \gamma :\beta^2: +\gamma_1:(\beta^1)^2: \right) _\lambda    \\
& :\alpha(w)(\alpha^*(w))^2 :+P(w):\alpha(w)(\alpha^{1*}(w))^2: +2 :\alpha^1(w)\alpha^*(w)\alpha^{1*}(w): \\
&\quad +\beta(w)\alpha^*(w)+\beta^1(w)\alpha^{1*}(w) +\chi_0\partial\alpha^* (w)    \Big]\\
 & =   P(w) \partial \tau e(w) \delta(z/w)  P(w) \partial \delta(z/w) e(w) + \partial P  \tau(e(w  ) \delta(z/w)
 \end{align*}

Now we compute:

\begin{align*}
[\pi(\bar {\mathbf d}^1)_\lambda \tau(e)] 
& = [ :\alpha^1\partial \alpha^*:+P:\alpha\partial \alpha^{1*}:+\frac{1}{2}\partial P
	:\alpha\alpha^{1*}: +\nu :\beta\beta^1:_\lambda \\
	& :\alpha(w)(\alpha^*(w))^2 :+P(w):\alpha(w)(\alpha^{1*}(w))^2: +2 :\alpha^1(w)\alpha^*(w)\alpha^{1*}(w): \\
&\quad +\beta(w)\alpha^*(w)+\beta^1(w)\alpha^{1*}(w) +\chi_0\partial\alpha^* (w) ]
\end{align*}

Breaking the calculation into parts, keeping only the nonzero contributions:

\begin{align*}
 [ :\alpha^1(z) & \partial \alpha^*(z):,  
	  :\alpha(w)(\alpha^*(w))^2 :+P(w):\alpha(w)(\alpha^{1*}(w))^2: +2 :\alpha^1(w)\alpha^*(w)\alpha^{1*}(w): 
         + \beta^1(w)\alpha^{1*}(w)  ]\\
            =& [  \partial \alpha^*(z),\alpha(w)]  :\alpha^1(z)(\alpha^*(w))^2 : + 2 P(w)[\alpha^1(z) ,\alpha^{1*}(w)  ]
         : \partial \alpha^*(z)\alpha(w)\alpha^{1*}(w):   \\
         &+P(w) [ \partial \alpha^*(z) ,  \alpha(w)  ]:\alpha^1(z)(\alpha^{1*}(w))^2: +\color{red} \delta_{r,0}()\\
         & + 2 [\alpha^1(z) ,  \alpha^{1*}(w)]:\partial \alpha^*(z) \alpha^1(w)\alpha^*(w): + [\alpha^1(z) ,  \alpha^{1*}(w)]\partial \alpha^*(z) \beta^1(w)\\
          =  & \partial \delta(z/w)  :\alpha^1(z)(\alpha^*(w))^2 : + 2 P(w)\delta(z/w)
         : \partial \alpha^*(z)\alpha(w)\alpha^{1*}(w):   \\
         &+P(w)  \partial_z \delta(z/w):\alpha^1(z)(\alpha^{1*}(w))^2:  \\ 
         & + 2 \delta(z/w):\partial \alpha^*(z) \alpha^1(w)\alpha^*(w): +\delta(z/w) \partial \alpha^*(z) \beta^1(w)\\
           = & \partial \delta(z/w)  :\alpha^1(w)(\alpha^*(w))^2 : + \delta(z/w)  :  \partial\alpha^1(w)(\alpha^*(w))^2 :
     + 2 P(w)\delta(z/w)
         : \partial \alpha^*(w)\alpha(w)\alpha^{1*}(w):   \\
         &+P(w)  \partial \delta(z/w):\alpha^1(w)(\alpha^{1*}(w))^2:  + P(w)   \delta(z/w):\partial\alpha^1(w)(\alpha^{1*}(w))^2:\\ 
         & + 2 \delta(z/w):\partial \alpha^*(w) \alpha^1(w)\alpha^*(w): +\delta(z/w) \partial \alpha^*(w) \beta^1(w)\\
\end{align*}

\begin{align*}
 [  P(z):\alpha(z) \partial \alpha^{1*}(z): ,	& :\alpha(w)(\alpha^*(w))^2 : +2 :\alpha^1(w)\alpha^*(w)\alpha^{1*}(w): +\beta(w)\alpha^*(w)  +\chi_0\partial\alpha^* (w) ]\\
   = & 2P(z)[\alpha(z), \alpha^*(w)]  : \partial \alpha^{1*}(z)\alpha(w)\alpha^*(w) : 
 +    2P(z)[\alpha(z),  \alpha^*(w)] :\partial \alpha^{1*}(z)\alpha^1(w)\alpha^{1*}(w):  \\
 &+  2P(z)[ \partial \alpha^{1*}(z), \alpha^1(w)] :\alpha(z)\alpha^*(w)\alpha^{1*}(w): \\
 & +  P(z)[\alpha(z),\alpha^*(w)]    \partial \alpha^{1*}(z): \beta(w)  +\chi_0 P(z)[\alpha(z),\partial\alpha^* (w)]  \partial \alpha^{1*}(z) \\
  =  & 2P(z)\delta : \partial \alpha^{1*}(z)\alpha(w)\alpha^*(w) : 
 +    2P(z) \delta :\partial \alpha^{1*}(z)\alpha^1(w)\alpha^{1*}(w):  \\
 &+  2P(z) \partial\delta :\alpha(z)\alpha^*(w)\alpha^{1*}(w):  \\
 & +  P(z)\delta  \partial \alpha^{1*}(z): \beta(w)  +\chi_0 P(z) \partial\delta \partial \alpha^{1*}(z) \\
   =  & 2P(w)\delta :   \partial \alpha^{1*}(w)  \alpha(w)\alpha^*(w) : \\
 &+    2P(w) \delta :  \partial\alpha^{1*}(w)    \alpha^1(w)\alpha^{1*}(w):  \\
 & +  2\big( \partial P(w)  \delta :\alpha(w)\alpha^*(w)\alpha^{1*}(w):  + P(w)  \partial  \delta :\alpha(w)\alpha^*(w)\alpha^{1*}(w):  \\
& +    P(w)   \delta : \partial \alpha(w)\alpha^*(w)\alpha^{1*}(w):   \big) 
  +  P(w)\delta  \partial \alpha^{1*}(w): \beta(w) \\
&  +\chi_0\big( \partial P(w) \delta \partial \alpha^{1*}(w) +  P(w) \partial\delta \partial \alpha^{1*}(w)  +  P(w)  \delta \partial^2 \alpha^{1*}(w)  \big)\\
 \\
\end{align*}

\begin{align*}
 [  \frac{1}{2}\partial P 
	:\alpha\alpha^{1*}:, &:\alpha(w)(\alpha^*(w))^2 : +2 :\alpha^1(w)\alpha^*(w)\alpha^{1*}(w): +\beta(w)\alpha^*(w)  +\chi_0\partial\alpha^* (w) ]\\
	   =  & \partial P(z)[\alpha(z), \alpha^*(w)]  :  \alpha^{1*}(z)\alpha(w)\alpha^*(w) : 
 +    \partial P(z)[\alpha(z),  \alpha^*(w)] :  \alpha^{1*}(z)\alpha^1(w)\alpha^{1*}(w):  \\
 &+  \partial P(z)[  \alpha^{1*}(z), \alpha^1(w)] :\alpha(z)\alpha^*(w)\alpha^{1*}(w): \\
 & +  1/2 \, \partial P(z)[\alpha(z),\alpha^*(w)]      \alpha^{1*}(z)  \beta(w)  +1/2 \,  \chi_0 \partial P(z)[\alpha(z), \partial \alpha^* (w)] \alpha^{1*}(z)\\
   = &  \partial P(w)\delta(z/w) :  \alpha^{1*}(w)\alpha(w)\alpha^*(w) : +    \partial P(w) \delta(z/w):  \alpha^{1*}(w)\alpha^1(w)\alpha^{1*}(w): \\
 &-    \partial P(w) \delta(z/w) :\alpha(w)\alpha^*(w)\alpha^{1*}(w):  + 1/2 \, \partial P(w)\delta(z/w)    \alpha^{1*}(w)  \beta(w)  \\
 &+1/2 \,  \chi_0 (\partial^2 P(w)  \delta(z/w) \alpha^{1*}(w )  +  \partial P(w)  \partial \delta(z/w) \alpha^{1*}(w ) 
 + \partial P(w)  \delta(z/w) \partial  \alpha^{1*}(w )  ) \\
\end{align*}

\begin{align*} 
 [  \nu :\beta\beta^1:,  
	& \beta(w)\alpha^*(w)+\beta^1(w)\alpha^{1*}(w)  ]\\
  =  & \nu [\beta, \beta(w)] \beta^1\alpha^*(w) +  \nu [\beta^1,   \beta(w) ]  \beta \alpha^*(w) \\
& +  \nu [\beta, \beta^1(w)] \beta^1\alpha^{1*}(w)  +  \nu [\beta^1,   \beta^1(w) ]  \beta \alpha^{1*}(w)  \\
  =  & -2\nu \partial \delta(z/w)  \beta^1(z) \alpha^{*}(w)1_0  -2 \nu  P(w) \partial \delta  \beta(z) \alpha^{1*}(w) 1_0
 - \nu \partial P(w) \delta(z/w) \beta(z) \alpha^{1*}(w)  1_0 +\color{red} () 1_1\\
   = & -2\nu \partial \delta(z/w)  \beta^1(w) \alpha^{*}(w)1_0   -2\nu\delta(z/w)   \partial \beta^1(w) \alpha^{1*}(w)1_0 \\
& -2 \nu (  P(w) \partial \delta  \beta(w) \alpha^{1*}(w) 1_0  
+ P(w)  \delta \partial \beta(w) \alpha^{1*}(w) 1_0) \\
& - \nu \partial P(w) \delta(z/w) \beta(w) \alpha^{1*}(w)  1_0 
\end{align*}

Collecting terms, and choosing $\nu \kappa_0 = - 1/2$ (which is consistent with our previous choices) we get:

\begin{align*}
[\pi(\bar {\mathbf d}^1)_\lambda \tau(e)] 
& =  \partial \delta(z/w) \tau (e^1) +  \partial \tau e^1 \delta(z/w)
\end{align*}

\begin{align*}
[\pi(\bar{\mathbf d})_\lambda \tau(  e^1)]
&=\Big[\left(P\left(:\alpha\partial\alpha^*: +:\alpha^1\partial\alpha^{1*}:\right )
+\frac{1}{2}\partial P:\alpha^1\alpha^{1*}:\right) _\lambda    \\
&\qquad\Big(\alpha^1(\alpha^*)^2
 	+P\left(\alpha^1 (\alpha^{1*} )^2 +2 : \alpha \alpha^{*} \alpha^{1*}:\right)  \\
&\qquad +\beta^1 \alpha^* +P\beta \alpha^{1*} +\chi_0\left(P  \partial \alpha^{1*}   +\frac{1}{2}\partial P  \alpha^{1*} \right)\Big)  \Big]\\
&\qquad - \Big[\left(\nu^2P\kappa_0:\beta^2:+\nu^2\kappa_0:(\beta^1)^2:\right)_\lambda\left(  \beta^1 \alpha^* +P\beta \alpha^{1*}\right) \Big] \\ \\ 
\end{align*}

%

\begin{align*}
  P(z):\alpha(z) &\partial\alpha^* (z) :  
 :\alpha^1(w)(\alpha^*(w))^2 :     \\
 &  \sim   2 P(z) \lfloor \alpha (z) \alpha^*(w) \rfloor : \partial\alpha^*( z)  \alpha^1(w) \alpha^*(w):\\
&  \sim   2 P(z) \iota_{z,w}\left(\frac{1}{z-w}\right): \partial\alpha^*( z)  \alpha^1(w) \alpha^*(w):\\
&  \sim   2 (P(w)+(z-w)\partial_wP(w)) \iota_{z,w}\left(\frac{1}{z-w}\right): (\partial\alpha^*( w) +(z-w)\partial_w\alpha^*(w)) \alpha^1(w) \alpha^*(w):\\
&  \sim   2 P(w) \iota_{z,w}\left(\frac{1}{z-w}\right):  \alpha^1(w) \alpha^*(w)\partial\alpha^*( w) :\\
\end{align*}
Hence
\begin{align*}
[P:\alpha\partial\alpha^*:{_\lambda}:\alpha^1(\alpha^*)^2]= 2 P:  \alpha^1\alpha^*\partial\alpha^*: 
\end{align*}
Next we calculate 
\begin{align*}
  P(z):\alpha^1(z)& \partial \alpha^{1*} (z) :  
 :\alpha^1(w)(\alpha^*(w))^2 :     \\
 &  \sim P(z)\lfloor \partial \alpha^{1*} (z):   \alpha^1(w)\rfloor:\alpha^1(z) (\alpha^*(w))^2 : \\
 &  \sim (P(w)+(z-w)\partial_wP(w))\iota_{z,w}\left(\frac{1}{(z-w)^2}\right):\left(\alpha^1(w)+(z-w)\partial_w\alpha^1(w)\right) (\alpha^*(w))^2 : \\
 &  \sim (P(w)+(z-w)\partial_wP(w))\iota_{z,w}\left(\frac{1}{(z-w)^2}\right):\left(\alpha^1(w)+(z-w)\partial_w\alpha^1(w)\right) (\alpha^*(w))^2 : \\
&  \sim P(w) \iota_{z,w}\left(\frac{1}{(z-w)^2}\right):\alpha^1(w) (\alpha^*(w))^2 : \\
&  \quad+\partial_wP(w)\iota_{z,w}\left(\frac{1}{(z-w)}\right):\alpha^1(w) (\alpha^*(w))^2 : \\
&  \quad+ P(w) \iota_{z,w}\left(\frac{1}{(z-w)}\right):\partial_w\alpha^1(w)  (\alpha^*(w))^2 : \\
\end{align*}
Hence
\begin{align*}
[P:\alpha^1\partial\alpha^{1*}:{_\lambda}:\alpha^1(\alpha^*)^2]= P :\alpha^1 (\alpha^*)^2 :\lambda+\partial P:\alpha^1 (\alpha^*)^2 : + P:\partial \alpha^1  (\alpha^*)^2 : \\
\end{align*}

Next we calculate 
\begin{align*}
  \frac{1}{2}\partial_zP(z):\alpha^1(z)&  \alpha^{1*} (z) :  
 :\alpha^1(w)(\alpha^*(w))^2 :     \\
 &  \sim  - \frac{1}{2}\partial_wP(w):\alpha^1(w) \alpha^*(w))^2 :  
  \end{align*}
Hence
\begin{align*}
[ \frac{1}{2}\partial P:\alpha^1&  \alpha^{1*} :{_\lambda}  
 :\alpha^1(\alpha^*)^2 ]=  - \frac{1}{2}\partial P:\alpha^1 (\alpha^*)^2 :   \\
\end{align*}

Next we have 
\begin{align*}
  [ P:\alpha &\partial\alpha^*{_\lambda}P\alpha^1(\alpha^{1*})^2]=0.
  \end{align*}

Next we have 
\begin{align*} 
 P(z):\alpha^1(z)& \partial \alpha^{1*} (z) :  P(w):\alpha^1(w)(\alpha^{1*}(w))^2:  \\
 &\sim  P(z) P(w) \lfloor \partial \alpha^{1*} (z)\alpha^1(w)\rfloor  :\alpha^1(z)(\alpha^{1*}(w))^2 : \\ 
 &\quad +2P(z) P(w) \lfloor   \alpha^{1} (z)\alpha^{1*}(w)\rfloor  : \partial \alpha^{1*} (z)\alpha^1(w)(\alpha^{1*}(w)) : \\
 &\quad  +2P(z)P(w)\lfloor \alpha^1(z)\alpha^{1*}(w))\rfloor\lfloor \partial \alpha^{1*} (z) \alpha^1(w)\rfloor  \alpha^{1*}(w) \\  \\
 &\sim  P(z) P(w)\iota_{z,w}\left(\frac{1}{(z-w)^2}\right) :\alpha^1(z)(\alpha^{1*}(w))^2 : \\ 
 &\quad +2P(z) P(w)\left(\frac{1}{z-w}\right) : \partial \alpha^{1*} (z)\alpha^1(w)(\alpha^{1*}(w)) : \\
 &\quad  +2P(z)P(w)\iota_{z,w}\left(\frac{1}{(z-w)^3}\right) \alpha^{1*}(w) \\ \\
&\sim   P(w)^2\iota_{z,w}\left(\frac{1}{(z-w)^2}\right) :\alpha^1(w)(\alpha^{1*}(w))^2 : \\ 
&\quad +  \partial_wP(w) P(w)\iota_{z,w}\left(\frac{1}{(z-w)}\right) :\alpha^1(w)(\alpha^{1*}(w))^2 : \\ 
&\quad +  P(w)^2\iota_{z,w}\left(\frac{1}{(z-w)}\right) :\partial_w\alpha^1(w)(\alpha^{1*}(w))^2 : \\ 
 &\quad +2P(w)^2\left(\frac{1}{z-w}\right) : \alpha^1(w)\alpha^{1*}(w) \partial \alpha^{1*} (w): \\ 
 &\quad  +2P(w)^2\iota_{z,w}\left(\frac{1}{(z-w)^3}\right) \alpha^{1*}(w)+2P(w)\partial_wP(w)\iota_{z,w}\left(\frac{1}{(z-w)^2}\right) \alpha^{1*}(w) \\
&\quad+P(w)\partial^2P(w)\iota_{z,w}\left(\frac{1}{(z-w)}\right) \alpha^{1*}(w) \ \\ \\
\end{align*}

Hence
\begin{align*}
[P:\alpha^1 \partial \alpha^{1*} :&{_\lambda}  P:\alpha^1(\alpha^{1*})^2:  ]\\
&=P^2  :\alpha^1(\alpha^{1*})^2 :\lambda   +  P\partial P  :\alpha^1(\alpha^{1*})^2 : \\ 
&\quad +  P^2 :\partial \alpha^1(\alpha^{1*})^2 :   +2P^2  \alpha^1\alpha^{1*} \partial \alpha^{1*}: \\
&\quad  +2\delta_{r,0}P^2  \alpha^{1*}\lambda^2+2\delta_{r,0}P\partial P \alpha^{1*}\lambda +\delta_{r,0}P\partial^2P  \alpha^{1*}
\end{align*}

Next we have 
\begin{align*} 
 \frac{1}{2}\partial_zP(z):\alpha^1(z)&  \alpha^{1*} (z) :  P(w):\alpha^1(w)(\alpha^{1*}(w))^2:  \\
 &\sim  \frac{1}{2}\partial_z P(z)P(w) \lfloor   \alpha^{1*} (z)\alpha^1(w)\rfloor  :\alpha^1(z)(\alpha^{1*}(w))^2 : \\ 
 &\quad + \partial_z P(z)P(z) P(w) \lfloor   \alpha^{1} (z)\alpha^{1*}(w)\rfloor  :   \alpha^{1*} (z)\alpha^1(w)(\alpha^{1*}(w)) :  \\  
  &\quad+  \partial_zP(z)\lfloor   \alpha^{1*} (z)\alpha^1(w)\rfloor \lfloor   \alpha^{1} (z)\alpha^{1*}(w)\rfloor   \alpha^{1*}(w) \\ \\
 &\sim - \frac{1}{2}P(w)\partial_w P(w)\iota_{z,w}\left(\frac{1}{z-w}\right) :\alpha^1(w)(\alpha^{1*}(w))^2 : \\ 
 &\quad +P(w) \partial_wP(w)\iota_{z,w}\left(\frac{1}{z-w}\right): \alpha^1(w)(\alpha^{1*}(w))^2 :  \\  
  &\quad-  \delta_{r,0}\partial_wP(w)P(w)\iota_{z,w}\left(\frac{1}{(z-w)^2}\right) \alpha^{1*}(w) \\ 
  &\quad-  \delta_{r,0}\partial^2_wP(w)P(w)\iota_{z,w}\left(\frac{1}{(z-w)}\right) \alpha^{1*}(w) .
\end{align*}
Hence
\begin{align*} 
\frac{1}{2}[ \partial P:\alpha^1& \alpha^{1*}  :{_\lambda}   P:\alpha^1(\alpha^{1*})^2: ] = \frac{1}{2}P\partial P :\alpha^1(\alpha^{1*})^2 :  - \delta_{r,0} P\partial P \alpha^{1*}\lambda - \delta_{r,0} P\partial ^2P \alpha^{1*} .
 \end{align*}

Next we have 
\begin{align*}
 2  P(z):\alpha(z)&  \partial_z\alpha^{*} (z) :  P(w) :\alpha (w)\alpha^*(w)\alpha^{1*}(w) :     \\
&\sim 2  P(z)P(w)  \iota_{z,w}\left(\frac{1}{z-w}\right) :\alpha (w) \partial_z\alpha^{*} (z) \alpha^{1*}(w) :     \\
&\quad + 2  P(z)P(w)  \iota_{z,w}\left(\frac{1}{(z-w)^2}\right) :\alpha (z) \alpha^{*} (w) \alpha^{1*}(w) :     \\
&\quad + 2 \delta_{r,0} P(z)P(w)  \iota_{z,w}\left(\frac{1}{(z-w)^3}\right) \alpha^{1*}(w)      \\  \\
&\sim 2  P(w)^2  \iota_{z,w}\left(\frac{1}{z-w}\right) :\alpha (w) \partial_w\alpha^{*} (w) \alpha^{1*}(w) :     \\
&\quad + 2  P(w)^2  \iota_{z,w}\left(\frac{1}{(z-w)^2}\right) :\alpha (w) \alpha^{*} (w) \alpha^{1*}(w) :     \\
&\quad + 2 \partial_w P(w)P(w)  \iota_{z,w}\left(\frac{1}{(z-w)}\right) :\alpha (w) \alpha^{*} (w) \alpha^{1*}(w) :     \\
&\quad + 2  P(w)^2  \iota_{z,w}\left(\frac{1}{(z-w)}\right) :\partial_w\alpha (w) \alpha^{*} (w) \alpha^{1*}(w) :     \\
&\quad + 2  P(w)^2  \iota_{z,w}\left(\frac{1}{(z-w)^3}\right) \alpha^{1*}(w)      \\
&\quad + 2 P(w)  \partial_wP(w) \iota_{z,w}\left(\frac{1}{(z-w)^2}\right) \alpha^{1*}(w)      \\
&\quad +    P(w) \partial_w^2P(w) \iota_{z,w}\left(\frac{1}{(z-w)}\right) \alpha^{1*}(w)      \\
\end{align*}
Hence
\begin{align*}
 2  [P:\alpha&  \partial \alpha^{*}  : {_\lambda} P :\alpha \alpha^*\alpha^{1*} :  ]   \\
&= 2  P^2   :\alpha  \partial\alpha^{*}  \alpha^{1*} :   + 2  P^2  :\alpha  \alpha^{*}  \alpha^{1*} : \lambda    \\
&\quad + 2P \partial P  :\alpha  \alpha^{*}  \alpha^{1*} :     + 2  P^2   :\partial \alpha  \alpha^{*}  \alpha^{1*} :     \\
&\quad + 2  \delta_{r,0}P^2   \alpha^{1*}   \lambda^2   + 2 \delta_{r,0}P  \partial P \alpha^{1*} \lambda    +    \delta_{r,0}P \partial ^2P \alpha^{1*}      \\
\end{align*}

Next we have 
\begin{align*}
 2  [P:\alpha^1&  \partial \alpha^{1*}  : _{\lambda} P :\alpha \alpha^*\alpha^{1*} : ]   =2  P^2: \alpha \alpha^*    \partial \alpha^{1*}  : ,
\end{align*}
and 
\begin{align*}
\frac{1}{2}  [\partial P:\alpha^1&    \alpha^{1*}  : _{\lambda} 2P :\alpha \alpha^*\alpha^{1*} : ]   =P\partial P:     \alpha \alpha^*\alpha^{1*}    : .
\end{align*}
 
 Next we have 
\begin{align*}
  [P:\alpha&  \partial \alpha^{*}  : {_\lambda} \beta^1\alpha^*]= P\beta^1  \partial \alpha^{*} ,
\end{align*}
\begin{align*}
  [P:\alpha^1&  \partial \alpha^{1*}  : {_\lambda} P\beta\alpha^{1*}]= P^2 \beta \partial \alpha^{1*}  
\end{align*}
and 
\begin{align*}
\frac{1}{2}  [\partial P:\alpha^1&   \alpha^{1*}  : {_\lambda} P\beta\alpha^{1*}]= \frac{1}{2}P\partial P \beta   \alpha^{1*} .
\end{align*}

Next we compute
\begin{align*}
\chi_0P(z)&:\alpha^1(z)\partial_z\alpha^{1*}(z):P(w)\partial_w\alpha^{1*}(w)  \\
&\sim \chi_0P(z)\partial_z\alpha^{1*}(z)\lfloor \alpha^1(z),\partial_w\alpha^{1*}(w)\rfloor P(w)  \\
&\sim \chi_0(P(w)+(z-w)\partial_wP(w))\iota_{z,w}\left(\frac{1}{(z-w)^2}\right)\left(\partial_w\alpha^{1*}(w)+(z-w)\partial_w^2\alpha^{1*}(w)\right)P(w)  \\ 
&\sim \chi_0(P(w)+(z-w)\partial_wP(w))\iota_{z,w}\left(\frac{1}{(z-w)^2}\right)\left(\partial_w\alpha^{1*}(w)+(z-w)\partial_w^2\alpha^{1*}(w)\right)P(w) ,
\end{align*}
so that 
\begin{align*}
\chi_0[P:\alpha^1\partial \alpha^{1*}:{_\lambda}P\partial \alpha^{1*}]=\chi_0P\left(P\partial \alpha^{1*}\lambda +\partial P\partial \alpha^{1*}+  P\partial^2 \alpha^{1*}\right)
\end{align*}
and \begin{align*}
\frac{1}{2}\chi_0P(z)&:\alpha^1(z)\partial_z\alpha^{1*}(z):\partial_w P(w)\alpha^{1*}(w)  \\
&\sim \frac{1}{2}\chi_0P(z)\partial_z\alpha^{1*}(z)\lfloor \alpha^1(z),\alpha^{1*}(w)\rfloor P(w)  \\
&\sim \frac{1}{2}\chi_0P(w) \iota_{z,w}\left(\frac{1}{(z-w)}\right)\alpha^{1*}(w)\partial_wP(w)  \\ 
\end{align*}
so that 
\begin{align*}
\frac{1}{2}\chi_0[P:\alpha^1\partial \alpha^{1*}:{_\lambda}\partial P\alpha^{1*}]=\frac{1}{2}\chi_0P\partial P\partial \alpha^{1*}.
\end{align*}

We also have 
\begin{align*}
\frac{1}{2}\partial_z P(z):&\alpha^1(z)\alpha^{1*}(z):\chi_0P(w)\partial_w\alpha^{1*}(w) \\
&\sim \frac{1}{2}\chi_0\partial_z P(z)\alpha^{1*}(z)\lfloor \alpha^{1}(z)\partial_w\alpha^{1*}(w) \rfloor P(w) \\
&\sim \frac{1}{2}\chi_0\left(\partial_w P(w)+(z-w)\partial_w^2 P(w)\right)\left(\alpha^{1*}(w)+(z-w)\partial_w\alpha^{1*}(z)\right)P(w)\iota_{z,w}\left(\frac{1}{(z-w)^2}\right) \\
\end{align*}
so that 
\begin{align*}
\frac{1}{2}\chi_0[\partial  P:\alpha^1\alpha^{1*}:{_\lambda}P\partial \alpha^{1*}] = \frac{1}{2}\chi_0P\left(\partial  P\alpha^{1*}\lambda+\partial^2 P\alpha^{1*}+\partial P\partial \alpha^{1*}\right).
\end{align*}
Next we have 
\begin{align*}
\frac{1}{4}\partial_z P(z):&\alpha^1(z)\alpha^{1*}(z):\chi_0\partial_wP(w)\alpha^{1*}(w) \\
&\sim \frac{1}{4}\chi_0\partial_z P(z)\alpha^{1*}(z)\lfloor \alpha^{1}(z)\alpha^{1*}(w) \rfloor \partial_wP(w) \\
&\sim \frac{1}{4}\chi_0\partial_w P(w) \alpha^{1*}(w)P(w)\iota_{z,w}\left(\frac{1}{(z-w)}\right) \\
\end{align*}
so that 
\begin{align*}
\frac{1}{4}[\partial P:\alpha^1\alpha^{1*}:{_\lambda}\chi_0\partial P\alpha^{1*}] = \frac{1}{4}\chi_0(\partial P)^2 \alpha^{1*}.
\end{align*}

Next on the agenda is 
\begin{align*}
-\nu^2\kappa_0P(z)&:\beta(z)^2:\beta^1(w)\alpha^*(w) \\
&\sim -2\nu^2\kappa_0P(z)\beta(z)\lfloor \beta(z)\beta^1(w)\rfloor\alpha^*(w) \\
&\sim 4\nu^2\kappa_0 \chi_1\left(P(w)+(z-w)\partial_wP(w))\right)\left(\beta(w)+(z-w)\partial_w\beta(w)\right)\iota_{z,w}\left(\frac{1}{(z-w)^2}\right)\alpha^*(w)  \\
\end{align*}
so that 
\begin{align*}
-\nu^2\kappa_0[P:\beta^2:{_\lambda}\beta^1\alpha^*] =4\nu^2\kappa_0\chi_1 \left(P\beta\lambda+\partial P \beta+P\partial\beta\right).
\end{align*}
We have
\begin{align*}
-\nu^2\kappa_0 &:\beta^1(z)^2:\beta^1(w)\alpha^*(w) \\
&\sim -2\nu^2\kappa_0\beta^1(z)\lfloor \beta^1(z)\beta^1(w)\rfloor\alpha^*(w) \\
&\sim 4\nu^2\kappa_0^2 \left(\beta^1(w)+(z-w)\partial_w\beta^1(w)\right)\iota_{z,w}\left(P(w)\frac{1}{(z-w)^2}+\frac{1}{2}\partial P(w)\frac{1}{z-w}\right)\alpha^*(w)  \\
\end{align*}
so that 
\begin{align*}
-\nu^2\kappa_0^2 [:(\beta^1)^2:{_\lambda}\beta^1\alpha^*]= 4\nu^2\kappa_0 ^2\alpha^*\left(P\beta^1\lambda+  P\partial\beta^1 +\frac{1}{2}\partial P\beta^1\right) \\
\end{align*}
Second to last we have 
\begin{align*}
- \nu^2\kappa_0&P(z):\beta(z)^2    P(w)\beta(w)\alpha^{1*}(w)  \\
&\sim  4\nu^2\kappa_0^2P(z)\beta(z)   P(w)\iota_{z,w}\left(\frac{1}{(z-w)^2}\right)\alpha^{1*}(w)   \\
&\sim 4\nu^2\kappa_0^2\left(P(w)+(z-w)\partial_wP(w)\right)\left(\beta(w) +(z-w)\partial_w\beta(w)\right)  P(w)\iota_{z,w}\left(\frac{1}{(z-w)^2}\right)\alpha^{1*}(w),
\end{align*}
so that
\begin{align*}
- \nu^2\kappa_0&[P:\beta^2 {_\lambda} P\beta\alpha^{1*}]  = 4\nu^2\kappa_0^2\alpha^{1*}P\left(P\beta\lambda+\partial P\beta+P\partial \beta\right).
\end{align*}

And finally we have
\begin{align*}
-\nu^2\kappa_0 \Big[&:(\beta^1)^2:{_\lambda}\left(P\beta \alpha^{1*}\right) \Big] = 4\nu^2\kappa_0\chi_1P\alpha^{1*}\beta(\beta^1\lambda +\partial \beta^1).
\end{align*}

Putting this altogether with $\chi_1=0$ we get 
\begin{align*}
[\pi(\bar{\mathbf d})_\lambda \tau(  e^1)]
&=\Big[\left(P\left(:\alpha\partial\alpha^*: +:\alpha^1\partial\alpha^{1*}:\right )
+\frac{1}{2}\partial P:\alpha^1\alpha^{1*}:\right) _\lambda    \\
&\qquad\Big(\alpha^1(\alpha^*)^2
 	+P\left(\alpha^1 (\alpha^{1*} )^2 +2 : \alpha \alpha^{*} \alpha^{1*}:\right)  \\
&\qquad +\beta^1 \alpha^* +P\beta \alpha^{1*} +\chi_0\left(P  \partial \alpha^{1*}   +\frac{1}{2}\partial P  \alpha^{1*} \right)\Big)  \Big]\\
&\qquad - \Big[\left(\nu^2P\kappa_0:\beta^2:+\nu^2\kappa_0:(\beta^1)^2:\right)_\lambda\left(  \beta^1 \alpha^* +P\beta \alpha^{1*}\right) \Big] \\ \\ 
&=2 P:  \alpha^1\alpha^*\partial\alpha^*: +P :\alpha^1 (\alpha^*)^2 :\lambda+\partial P:\alpha^1 (\alpha^*)^2 : + P:\partial \alpha^1  (\alpha^*)^2 : 
\\
&\qquad  - \frac{1}{2}\partial P:\alpha^1 (\alpha^*)^2 : +P^2  :\alpha^1(\alpha^{1*})^2 :\lambda   +  P\partial P  :\alpha^1(\alpha^{1*})^2 : \\ 
&\qquad +  P^2 :\partial \alpha^1(\alpha^{1*})^2 :   +2P^2  \alpha^1\alpha^{1*} \partial \alpha^{1*}: + 2  P^2  \delta_{r,0} \alpha^{1*}   \lambda^2   + 2 \delta_{r,0}P  \partial P \alpha^{1*} \lambda    +    \delta_{r,0}P \partial ^2P \alpha^{1*}  \\
&\qquad +\frac{1}{2}P\partial P :\alpha^1(\alpha^{1*})^2 :  - \delta_{r,0} P\partial P \alpha^{1*}\lambda - \delta_{r,0} P\partial ^2P \alpha^{1*}
\\
&\qquad +2  P^2   :\alpha  \partial\alpha^{*}  \alpha^{1*} :   + 2  P^2  :\alpha  \alpha^{*}  \alpha^{1*} : \lambda    \\
&\qquad + 2P \partial P  :\alpha  \alpha^{*}  \alpha^{1*} :     + 2  P^2   :\partial \alpha  \alpha^{*}  \alpha^{1*} :    + 2  P^2  \delta_{r,0} \alpha^{1*}   \lambda^2   + 2 \delta_{r,0}P  \partial P \alpha^{1*} \lambda    +    \delta_{r,0}P \partial ^2P \alpha^{1*}      \\ 
&\qquad+ 2  P^2: \alpha \alpha^*    \partial \alpha^{1*}  : +P\partial P:     \alpha \alpha^*\alpha^{1*}    : \\
&\qquad  + P\beta^1  \partial \alpha^{*} 
+ P^2 \beta \partial \alpha^{1*}  
+\frac{1}{2}P\partial P \beta   \alpha^{1*}  \\
&\qquad +\chi_0P\left(P\partial \alpha^{1*}\lambda +\partial P\partial \alpha^{1*}+  P\partial^2 \alpha^{1*}\right) +\frac{1}{2}\chi_0P\partial P\partial \alpha^{1*} \\
&\qquad +\frac{1}{2}\chi_0P\left(\partial  P\alpha^{1*}\lambda+\partial^2 P\alpha^{1*}+\partial P\partial \alpha^{1*}\right) +\frac{1}{4}\chi_0(\partial P )^2\alpha^{1*}\\
&\qquad + 4\nu^2\kappa_0 ^2\alpha^*\left(P\beta^1\lambda+  P \partial \beta^1 +\frac{1}{2}\partial P\beta^1\right)   \\
&\qquad + 4\nu^2\kappa_0^2\alpha^{1*}P\left(P\beta\lambda+\partial P\beta+P\partial \beta\right) 
\end{align*}

\begin{align*}
&=2 P:  \alpha^1\alpha^*\partial\alpha^*: + P:\partial \alpha^1  (\alpha^*)^2 : +  P\partial P  :\alpha^1(\alpha^{1*})^2 :  \\
&\qquad 
+P :\alpha^1 (\alpha^*)^2 :\lambda+\partial P:\alpha^1 (\alpha^*)^2 : \\
&\qquad +  P^2 :\partial \alpha^1(\alpha^{1*})^2 :   +2P^2  \alpha^1\alpha^{1*} \partial \alpha^{1*}:+2  P^2   :\alpha  \partial\alpha^{*}  \alpha^{1*} :  - \frac{1}{2}\partial P:\alpha^1 (\alpha^*)^2 :  \\ 
&\qquad + 2P \partial P  :\alpha  \alpha^{*}  \alpha^{1*} :     + 2  P^2   :\partial \alpha  \alpha^{*}  \alpha^{1*} : +\frac{1}{2}P\partial P :\alpha^1(\alpha^{1*})^2 :  \\ 
&\qquad  +    \delta_{r,0}P \partial ^2P \alpha^{1*}   + P\beta^1  \partial \alpha^{*}  + P^2 \beta \partial \alpha^{1*}  
+\frac{1}{2}P\partial P \beta   \alpha^{1*}   \\ 
&\qquad+ 2  P^2: \alpha \alpha^*    \partial \alpha^{1*}  : +P\partial P:     \alpha \alpha^*\alpha^{1*}    :   +    \delta_{r,0}P \partial ^2P \alpha^{1*}- \delta_{r,0} P\partial ^2P \alpha^{1*} \\  \\
&\qquad  +P^2  :\alpha^1(\alpha^{1*})^2 :\lambda + 2  P^2  \delta_{r,0} \alpha^{1*}   \lambda^2   + 2 \delta_{r,0}P  \partial P \alpha^{1*} \lambda   \\
&\qquad  - \delta_{r,0} P\partial P \alpha^{1*}\lambda 
\\
&\qquad   + 2  P^2  :\alpha  \alpha^{*}  \alpha^{1*} : \lambda     + 2  P^2  \delta_{r,0} \alpha^{1*}   \lambda^2   + 2 \delta_{r,0}P  \partial P \alpha^{1*} \lambda       \\ 
&\qquad +\chi_0P^2\partial \alpha^{1*}\lambda + 4\nu^2\kappa_0 ^2\alpha^*P\beta^1\lambda+ 4\nu^2\kappa_0^2\alpha^{1*}P^2\beta\lambda\\ \\
&\qquad +\chi_0P\left(\partial P\partial \alpha^{1*}+  P\partial^2 \alpha^{1*}\right) +\frac{1}{2}\chi_0P\partial P\partial \alpha^{1*} \\
&\qquad +\frac{1}{2}\chi_0P\left(\partial  P\alpha^{1*}\lambda+\partial^2 P\alpha^{1*}+\partial P\partial \alpha^{1*}\right) +\frac{1}{4}\chi_0(\partial P)^2 \alpha^{1*}\\
&\qquad+ 4\nu^2\kappa_0 ^2\alpha^*\left( \partial P\beta^1+\frac{1}{2} \partial P\beta^1\right)   \\
&\qquad + 4\nu^2\kappa_0^2\alpha^{1*}P\left(\partial P\beta+P\partial \beta\right)  \\ \\
&=P\partial(:  \alpha^1(\alpha^*)^2: )  \\
&\qquad +  P\partial(P :\alpha^1(\alpha^{1*})^2 :+2  P   :\alpha   \alpha^{*}  \alpha^{1*} :)  - \frac{1}{2}\partial P:\alpha^1 (\alpha^*)^2 :  \\ 
&\qquad    -\frac{1}{2}P\partial P :\alpha^1(\alpha^{1*})^2 :  \\ 
&\qquad  +    \delta_{r,0}P \partial ^2P \alpha^{1*}   + P\beta^1  \partial \alpha^{*}  + P^2 \beta \partial \alpha^{1*}  
+\frac{1}{2}P\partial P \beta   \alpha^{1*}   \\ 
&\qquad  +    \delta_{r,0}P \partial ^2P \alpha^{1*}- \delta_{r,0} P\partial ^2P \alpha^{1*} \\  \\
&\qquad +P\left( :\alpha^1 (\alpha^*)^2 : +P\left(  :\alpha^1(\alpha^{1*})^2 :   + 2    :\alpha  \alpha^{*}  \alpha^{1*} :\right)+ \alpha^*\beta^1+  \alpha^{1*}P\beta\right) \lambda    \\
&\qquad + 4 P^2  \delta_{r,0} \alpha^{1*}   \lambda^2   + 2 \delta_{r,0}P  \partial P \alpha^{1*} \lambda   - \delta_{r,0} P\partial P \alpha^{1*}\lambda    + 2 \delta_{r,0}P  \partial P \alpha^{1*} \lambda       \\ 
&\qquad +\chi_0P^2\partial \alpha^{1*}\lambda +\frac{1}{2}\chi_0P\partial  P\alpha^{1*}\lambda\\ \\
&\qquad +  P\partial P  :\alpha^1(\alpha^{1*})^2 : +P\partial P:     \alpha \alpha^*\alpha^{1*}  +\partial P:\alpha^1 (\alpha^*)^2 : \\
&\qquad  +\chi_0P\left(\partial P\partial \alpha^{1*}+  P\partial^2 \alpha^{1*}\right) +\frac{1}{2}\chi_0P\partial P\partial \alpha^{1*} \\
&\qquad +\frac{1}{2}\chi_0P\left(\partial^2 P\alpha^{1*}+\partial P\partial \alpha^{1*}\right) +\frac{1}{4}\chi_0(\partial P)^2 \alpha^{1*}\\
&\qquad+ \alpha^*P\left( \partial P\beta^1 +\frac{1}{2} \partial P\beta^1\right)   \\
&\qquad + \alpha^{1*}P\left(\partial P\beta+P\partial \beta\right) 
\end{align*}

On the other hand 
\begin{align*}
[\pi(\bar{\mathbf d})_\lambda \tau(  e^1)]&=P\partial\tau( e^1) +P\tau(e^1)\lambda  +\frac{1}{2}\partial P\tau(e^1) \checkmark\\ 
&=P \partial\Big(\alpha^1(\alpha^*)^2
 	+P\left(\alpha^1 (\alpha^{1*} )^2 +2 : \alpha \alpha^{*} \alpha^{1*}:\right)  \\ 
&\qquad +\beta^1 \alpha^* +P\beta \alpha^{1*} +(\kappa_0+4\delta_{r,0})\left(P  \partial\alpha^{1*}   +\frac{1}{2}\partial P \alpha^{1*} \right) \Big) \\ \\
&\qquad +P  \Big(\alpha^1(\alpha^*)^2
 	+P\left(\alpha^1 (\alpha^{1*} )^2 +2 : \alpha \alpha^{*} \alpha^{1*}:\right)  \\
&\qquad +\beta^1 \alpha^* +P\beta \alpha^{1*} +(\kappa_0+4\delta_{r,0})\left(P  \partial\alpha^{1*}   +\frac{1}{2}\partial P \alpha^{1*} \right) \Big) \lambda\\ \\ 
&\qquad +\frac{1}{2}\partial P\Big(\alpha^1(\alpha^*)^2
 	+P\left(\alpha^1 (\alpha^{1*} )^2 +2 : \alpha \alpha^{*} \alpha^{1*}:\right)  \\
&\qquad +\beta^1 \alpha^* +P\beta \alpha^{1*} +(\kappa_0+4\delta_{r,0})\left(P  \partial\alpha^{1*}   +\frac{1}{2}\partial P \alpha^{1*} \right) \Big) \\ \\
&=P  \Big(\partial\alpha^1(\alpha^*)^2
 	+\partial P\left(\alpha^1 (\alpha^{1*} )^2 +2 : \alpha \alpha^{*} \alpha^{1*}:\right)  \\ 
&\qquad +\partial\beta^1 \alpha^* +\partial P\beta \alpha^{1*} +(\kappa_0+4\delta_{r,0})\left(\partial P  \partial\alpha^{1*}   +\frac{1}{2}\partial^2P \alpha^{1*} \right) \Big) \\ \\
&\quad +P  \Big(2 \alpha^1 \alpha^*\partial\alpha^*
 	+P\left(\partial\alpha^1 (\alpha^{1*} )^2 +2 : \partial\alpha \alpha^{*} \alpha^{1*}:\right)  \\ 
&\qquad +\beta^1\partial \alpha^* +P\partial \beta \alpha^{1*} +(\kappa_0+4\delta_{r,0})\left(P  \partial^2\alpha^{1*}   +\frac{1}{2}\partial P\partial \alpha^{1*} \right) \Big) \\ \\
&\quad   +  P^2\left(2\alpha^1 \alpha^{1*}\partial\alpha^{1*}+ 2(: \alpha\partial \alpha^{*} \alpha^{1*}:+ : \alpha \alpha^{*} \partial\alpha^{1*}:) +\beta\partial \alpha^{1*} \right)   \\ \\ 
&\quad   +P  \Big(\alpha^1(\alpha^*)^2
 	+P\left(\alpha^1 (\alpha^{1*} )^2 +2 : \alpha \alpha^{*} \alpha^{1*}:\right)  \\ 
&\qquad  +\beta^1 \alpha^* +P\beta \alpha^{1*} +(\kappa_0+4\delta_{r,0})\left(P  \partial\alpha^{1*}   +\frac{1}{2}\partial P \alpha^{1*} \right) \Big)\lambda \\ \\ 
&\qquad +\frac{1}{2}\partial P\Big(\alpha^1(\alpha^*)^2
 	+P\left(\alpha^1 (\alpha^{1*} )^2 +2 : \alpha \alpha^{*} \alpha^{1*}:\right)  \\
&\qquad +\beta^1 \alpha^* +P\beta \alpha^{1*} +(\kappa_0+4\delta_{r,0})\left(P  \partial\alpha^{1*}   +\frac{1}{2}\partial P \alpha^{1*} \right) \Big) \\  \\
\end{align*}

Thus we need to compare
\begin{align*}
&=P\partial( 2  P   :\alpha   \alpha^{*}  \alpha^{1*} :)   \\ 
&\qquad   +P\partial P:     \alpha \alpha^*\alpha^{1*}  
\end{align*}
with
\begin{align*}
&P  \Big( \partial P\left( 2 : \alpha \alpha^{*} \alpha^{1*}:\right)  \\ \\
& +P  \Big(P\left(  2 : \partial\alpha \alpha^{*} \alpha^{1*}:\right) \Big) \\ \\
&   +  P^2\left(  2(: \alpha\partial \alpha^{*} \alpha^{1*}:+ : \alpha \alpha^{*} \partial\alpha^{1*}:) \right)   \\ \\ 
&+\frac{1}{2}\partial P\Big(P\left( 2 : \alpha \alpha^{*} \alpha^{1*}:\right) 
\end{align*}
which are equal.

The last bracket for the gauge algebra is 
\begin{align*}
[\pi(\bar{\mathbf d}^1)_\lambda \tau(  e^1)]
&=\Big[\left(:\alpha^1\partial \alpha^*:+P:\alpha\partial \alpha^{1*}:+\frac{1}{2}\partial P
	:\alpha\alpha^{1*}:+\nu:\beta\beta^1:\right) _\lambda    \\
&\qquad\Big(:\alpha^1(\alpha^*)^2:
 	+P\left(:\alpha^1 (\alpha^{1*} )^2 :+2 : \alpha \alpha^{*} \alpha^{1*}:\right)  \\
&\qquad +\beta^1 \alpha^* +P\beta \alpha^{1*} +\chi_0\left(P  \partial \alpha^{1*}   +\frac{1}{2}\partial P  \alpha^{1*} \right)\Big)  \Big]\\ \\ 
&=\Big[\left(:\alpha^1\partial \alpha^*:+P:\alpha\partial \alpha^{1*}:+\frac{1}{2}\partial P
	:\alpha\alpha^{1*}: \right) _\lambda    \\
&\qquad\Big(:\alpha^1(\alpha^*)^2:
 	+P\left(:\alpha^1 (\alpha^{1*} )^2: +2 : \alpha \alpha^{*} \alpha^{1*}:\right)  \\
&\qquad +\beta^1 \alpha^* +P\beta \alpha^{1*} +\chi_0\left(P  \partial \alpha^{1*}   +\frac{1}{2}\partial P  \alpha^{1*} \right)\Big)  \Big]\\
&\quad +\nu\Big[ :\beta\beta^1:{_\lambda}   \Big(\beta^1 \alpha^* +P\beta \alpha^{1*} \Big)  \Big].
\end{align*}

We have twenty one brackets to compute when expanding the first summand above. 
 The first is $\Big[:\alpha^1\partial \alpha^*:{ _\lambda } \alpha^1(\alpha^*)^2  \Big]=0$.
The second is 
\begin{align*}
\Big[:\alpha^1\partial \alpha^*:{ _\lambda } P:\alpha^1 (\alpha^{1*} )^2: \Big]&=2P:\alpha^1 \alpha^{1*}  \partial \alpha^*:
\end{align*}
  The third is obtained from 
\begin{align*}
2 :\alpha^1(z)&\partial_z \alpha^*(z):P(w): \alpha(w) \alpha^{*}(w) \alpha^{1*}(w): \\
&\sim 2\lfloor \partial_z \alpha^*(z),\alpha(w)\rfloor  :\alpha^1(z)P(w)\alpha^{*}(w) \alpha^{1*}(w): \\
&\quad +2P(w)\lfloor \alpha^1(z)\alpha^{1*}(w)\rfloor: \partial_z \alpha^*(z)\alpha(w) \alpha^{*}(w):  \\
&\quad +2 P(w)\lfloor \partial_z \alpha^*(z),\alpha(w)\rfloor \lfloor\alpha^1(z)\alpha^{1*}(w)\rfloor \alpha^{*}(w)  \\ \\ 
&\sim 2\iota_{z,w}P(w)\left(\frac{1}{(z-w)^2}\right) :\alpha^1(z)\alpha^{*}(w) \alpha^{1*}(w): \\
&\quad +2\iota_{z,w}P(w)\left(\frac{1}{(z-w)}\right): \partial_z \alpha^*(z)\alpha(w) \alpha^{*}(w):  \\
&\quad +2\iota_{z,w}P(w)\left(\frac{1}{(z-w)^3}\right)\  \alpha^{*}(w).
\end{align*} 
Thus 
\begin{align*}
2 [:\alpha^1\partial\alpha^*:_{\lambda}:P \alpha \alpha^{*} \alpha^{1*}:]=2P\left(:\alpha^1\alpha^{*}\alpha^{1*}:\lambda +:\partial\alpha^1\alpha^{*}\alpha^{1*}:\right) +2P : \partial  \alpha^*\alpha\alpha^{*}:  + 2\delta_{r,0} P\alpha^{*}\lambda^{(2)}.
\end{align*}
The fourth calculation is $[:\alpha^1\partial \alpha^*:{ _\lambda}\beta^1 \alpha^*]=0$.  The fifth is 
obtained from 
\begin{align*}
:\alpha^1(z)\partial_z \alpha^*(z):&P(w)\beta(w)\alpha^{1*}(w) \\
&\sim 2P(w)\lfloor\alpha^1(z),\alpha^{1*}(w) \rfloor\partial_z \alpha^*(z)\beta(w),
\end{align*}
so that 
\begin{align*}
[:\alpha^1\partial \alpha^*:{_\lambda}P\beta\alpha^{1*}] = P \partial  \alpha^*\beta.
\end{align*}
The sixth and seventh calculation are
\begin{align*}
[:\alpha^1\partial \alpha^*:{_\lambda} \chi_0\left(P  \partial \alpha^{1*}   +\frac{1}{2}\partial P  \alpha^{1*} \right)]&=\chi_0P( \partial \alpha^*\lambda+\partial^2\alpha^*)   +\frac{1}{2}\chi_0\partial P\partial \alpha^* .
\end{align*}
The eighth calculated from by 
\begin{align*}
P(z):\alpha(z)&\partial_z\alpha^{1*}(z)::\alpha^1(w)(\alpha^*(w))^2: \\
&\sim P(z)\iota_{z,w}\left(\frac{1}{(z-w)^2}\right):\alpha(z)  (\alpha^*(w))^2: \\
&\quad + 2P(z)\iota_{z,w}\left(\frac{1}{(z-w)}\right):\partial_z\alpha^{1*}(z)\alpha^1(w)\alpha^*(w): \\
&\quad+  2P(z)\iota_{z,w}\left(\frac{1}{(z-w)}\right)\iota_{z,w}\left(\frac{1}{(z-w)^2}\right) \alpha^*(w)
\end{align*}
so that 
\begin{align*}
[P:\alpha\partial\alpha^{1*}{_\lambda}\alpha^1(\alpha^*)^2] 
&= P:\alpha  (\alpha^*)^2:\lambda+\partial P :\alpha  (\alpha^*)^2:+ P :\partial\alpha  (\alpha^*)^2: \\
&\quad + 2P  :\partial \alpha^{1*}\alpha^1\alpha^*:  +  2\delta_{r,0}(P\lambda^{(2)}+\partial P\lambda +\frac{1}{2}\partial^2P) \alpha^*
\end{align*}
The ninth bracket is calculated from 
\begin{align*}
P(z):\alpha(z)&\partial_z\alpha^{1*}(z): P:\alpha^1(w) (\alpha^{1*} (w))^2: \\
&\sim 2P(z)\lfloor\partial_z\alpha^{1*}(z)\alpha^1(w)\rfloor P:\alpha(z) (\alpha^{1*}(w) )^2: \\
&\sim 2P(z)\iota_{z,w}\left(\frac{1}{(z-w)^2}\right) P:\alpha(z) (\alpha^{1*}(w) )^2: \\
\end{align*}
which gives us
\begin{align*}
[P:\alpha&\partial\alpha^{1*}{_\lambda} P:\alpha^1 (\alpha^{1*} )^2]= P^2:\alpha (\alpha^{1*} )^2:  \lambda+P\partial P :\alpha (\alpha^{1*} )^2: +P^2 :\partial\alpha (\alpha^{1*} )^2: \\
\end{align*}
The tenth bracket is calculated from 
\begin{align*}
2[P:\alpha\partial \alpha^{1*}:{_\lambda} : P\alpha \alpha^{*}\alpha^{1*}:]= 2P^2:\alpha \alpha^{1*}\partial\alpha^{1*}:.
\end{align*}
The eleventh is calculated from 
\begin{align*}
[P:\alpha\partial \alpha^{1*}:{_\lambda} \beta^1 \alpha^*]=P \beta^1\partial \alpha^{1*} 
\end{align*}
The twelfth bracket is
\begin{align*}
[P:\alpha\partial \alpha^{1*}:{_\lambda} P\beta \alpha^{1*}]=0.
\end{align*}
The thirteenth and fourteenth brackets are  zero.

Next we have 
\begin{align*}
\frac{1}{2}[\partial P:\alpha  \alpha^{1*}:{_\lambda} :\alpha^1(\alpha^*)^2:]=-\frac{1}{2}\partial P:\alpha   (\alpha^*)^2:+\partial P:\alpha^1\alpha^{1*}\alpha^*:
\end{align*}
and the sixteenth bracket is 
\begin{align*}
\frac{1}{2}[\partial P:\alpha  \alpha^{1*}:{_\lambda} P:\alpha^1 (\alpha^{1*} )^2: ]=-\frac{1}{2}P\partial P:\alpha   (\alpha^{1*} )^2:.
\end{align*}
The seventeenth is 
\begin{align*}
\frac{1}{2}[\partial P:\alpha  \alpha^{1*}:{_\lambda}2 P: \alpha \alpha^{*} \alpha^{1*}:]=P\partial P :\alpha( \alpha^{1*})^2:
\end{align*}
The eighteenth is 
\begin{align*}
\frac{1}{2}[\partial P:\alpha  \alpha^{1*}:{_\lambda}\beta^1 \alpha^*]=\frac{1}{2}\partial P\beta^1\alpha^{1*}
\end{align*}
and the last three are zero.

In the second summation we have 
\begin{align*}
\nu\Big[ :\beta\beta^1:{_\lambda}   \Big(\beta^1 \alpha^* +P\beta \alpha^{1*} \Big)  \Big] =-2\nu \left(P\beta\lambda+P\partial \beta+ \frac{1}{2}\partial P\beta\right)\alpha^*\kappa_0 -2\nu\kappa_0 P\beta^1\alpha^{1*} \lambda-2\nu\kappa_0 P\partial\beta^1\alpha^{1*}  
\end{align*}
Thus we get 
\begin{align*}
[\pi(\bar{\mathbf d}^1)_\lambda \tau(  e^1)]
&=2P:\alpha^1 \alpha^{1*}  \partial \alpha^*:+2P\left(:\alpha^1\alpha^{*}\alpha^{1*}:\lambda +:\partial\alpha^1\alpha^{*}\alpha^{1*}:\right) +2P : \partial  \alpha^*\alpha\alpha^{*}:  \\
&\quad  +2P\delta_{r,0}  \alpha^{*}\lambda^2  + P \partial  \alpha^*\beta  \\
&\quad +\chi_0P( \partial \alpha^*\lambda+\partial^2\alpha^*)   +\frac{1}{2}\chi_0\partial P\partial \alpha^* \\
&\quad + P:\alpha  (\alpha^*)^2:\lambda+\partial P :\alpha  (\alpha^*)^2:+ P :\partial\alpha  (\alpha^*)^2: \\
&\quad + 2P  :\partial \alpha^{1*}\alpha^1\alpha^*:  +  2\delta_{r,0} (P\lambda^{(2)}+\partial P\lambda +\frac{1}{2}\partial^2P) \alpha^* \\
&\quad +P^2:\alpha (\alpha^{1*} )^2:  \lambda+P\partial P :\alpha (\alpha^{1*} )^2: +P^2 :\partial\alpha (\alpha^{1*} )^2: \\
&\quad +2P^2:\alpha \alpha^{1*}\partial\alpha^{1*}: \\
&\quad +P \beta^1\partial \alpha^{1*} \\
&\quad-\frac{1}{2}\partial P:\alpha   (\alpha^*)^2:+\partial P:\alpha^1\alpha^{1*}\alpha^*:
 \\
&\quad -\frac{1}{2}P\partial P:\alpha   (\alpha^{1*} )^2: \\
&\quad +P\partial P :\alpha( \alpha^{1*})^2: \\
&\quad+\frac{1}{2}\partial P\beta^1\alpha^{1*} \\
&\quad -2\nu \left(P\beta\lambda+P\partial \beta+ \frac{1}{2}\partial P\beta\right)\alpha^*\chi_0 -2\nu P\beta^1\alpha^{1*} \lambda  -2\nu\kappa_0 P\partial\beta^1\alpha^{1*}\\ \\
&=P\partial\Big(2:\alpha^1 \alpha^{1*}   \alpha^*:+:   \alpha^*\alpha\alpha^{*}:  +P :\alpha (\alpha^{1*} )^2:\Big)\\ 
&\quad + P \Big(\partial  \alpha^*\beta -2\nu \kappa_0\partial \beta\alpha^*  + \beta^1\partial \alpha^{1*}   -2\nu\kappa_0 \partial\beta^1\alpha^{1*} +\chi_0\partial^2\alpha^* \Big)  \\  \\
&\quad  +P\Big(:\alpha  (\alpha^*)^2:+2 :\alpha^1\alpha^{*}\alpha^{1*}:  +P:\alpha (\alpha^{1*} )^2:   -2\nu\kappa_0 \beta\alpha^*  -2\nu\kappa_0 \beta^1\alpha^{1*}   +\chi_0 \partial \alpha^*  \Big)\lambda\\ \\
&\quad +\frac{1}{2}\partial P \Big(: \alpha   (\alpha^*)^2:   + P :\alpha   (\alpha^{1*} )^2:  +2:\alpha^1\alpha^{1*}\alpha^*:
+  \beta^1\alpha^{1*} -2\nu\kappa_0\beta\alpha^*  + \chi_0 \partial \alpha^*\Big)
\end{align*}
On the other hand 
\begin{align*}
[\pi(\bar{\mathbf d}^1)_\lambda \tau(  e^1)]&=P\partial\tau( e) +P\tau(e)\lambda  +
\frac{1}{2}\partial P\tau(e) \\
&=P\partial\Big(:\alpha(z)(\alpha^*)^2 :+P:\alpha(\alpha^{1*})^2: +2 :\alpha^1\alpha^*\alpha^{1*}: \\
&\qquad +\beta\alpha^*+\beta^1\alpha^{1*} +\chi_0\partial\alpha^*  \Big)  \\
&\quad +P\Big(:\alpha(\alpha^*)^2 :+P:\alpha(\alpha^{1*})^2: +2 :\alpha^1\alpha^*\alpha^{1*}: \\
&\qquad +\beta\alpha^*+\beta^1\alpha^{1*} +\chi_0\partial\alpha^*  \Big)\lambda \\
&\quad +\frac{1}{2}\partial P\Big(\alpha(\alpha^*)^2 :+P:\alpha(\alpha^{1*})^2: +2 :\alpha^1\alpha^*\alpha^{1*}: \\
&\qquad +\beta\alpha^*+\beta^1\alpha^{1*} +\chi_0\partial\alpha^*  \Big).
\end{align*}
so provided $-2\nu\kappa_0=1$ we have the two are equal.
\end{proof}
\color{black}

\section{Further Comments}
 We plan to use the above construction to help elucidate the structure of these representations of a three point algebra, describe the space of their intertwining operators and eventually describe the center of a certain completion of the universal enveloping algebra for the three point algebra.  Also since in the representations above, part of the center acts nontrivially, so we will explore in future work whether there is an ambient gerbe lurking in the background. 


\def\cprime{$'$} \def\cprime{$'$} \def\cprime{$'$} \def\cprime{$'$}
  \def\cprime{$'$} \def\cprime{$'$} \def\cprime{$'$} \def\cprime{$'$}


 \end{document}